\newcommand{\de}{\partial}
\newcommand{\g}{\mathfrak{S}}
\DeclareMathOperator{\bwt}{bwt}
\newcommand{\figscale}{0.6}
\newcommand{\inlinescale}{0.37}
\numberwithin{equation}{section}
\newcommand{\Z}{\mathbb Z}
\DeclareMathOperator{\wt}{wt}
\DeclareMathOperator{\QBPD}{QBPD}
\DeclareMathOperator{\BPD}{BPD}
\DeclareMathOperator{\id}{id}
\theoremstyle{plain} 
\newtheorem{theorem}{Theorem}[section]
\newtheorem{proposition}[theorem]{Proposition} 
\newtheorem{lemma}[theorem]{Lemma}
\newtheorem{corollary}[theorem]{Corollary}
\newtheorem{setup}[theorem]{Setup}
\newtheorem{example}[theorem]{Example}
\theoremstyle{definition} 
\newtheorem{definition}[theorem]{Definition} 
\newtheorem{remark}[theorem]{Remark}
\title{Quantum bumpless pipe dreams}
\author{Tuong Le}
\author{Shuge Ouyang}
\author{Leo Tao}
\author{Joseph Restivo}
\author{Angelina Zhang}
\address[Le]{
Dept.\ of Mathematics\\
   University of Michigan\\
   Ann Arbor, MI
}
\email{tuongle@umich.edu}
\address[Ouyang]{
Dept.\ of Mathematics\\
   University of Michigan\\
   Ann Arbor, MI
}
\email{shugeoy@umich.edu}
\address[Tao]{
Dept.\ of Mathematics\\
   University of Michigan\\
   Ann Arbor, MI
}
\email{leotao@umich.edu}
\address[Restivo]{
Dept.\ of Mathematics\\
   University of Michigan\\
   Ann Arbor, MI
}
\email{restivo@umich.edu}
\address[Zhang]{
Dept.\ of Mathematics\\
   University of Michigan\\
   Ann Arbor, MI
}
\email{angzh@umich.edu}
\date{\today}
\begin{document}

\begin{abstract}
    
    Schubert polynomials are polynomial representatives of Schubert classes in the cohomology of the complete flag variety and have a combinatorial formulation in terms of bumpless pipe dreams. Quantum double Schubert polynomials are polynomial representatives of Schubert classes in the torus-equivariant quantum cohomology of the complete flag variety, but no analogous combinatorial formulation had been discovered. We introduce a generalization of the bumpless pipe dreams called quantum bumpless pipe dreams, giving a novel combinatorial formula for quantum double Schubert polynomials as a sum of binomial weights of quantum bumpless pipe dreams. We give a bijective proof for this formula by showing that the sum of binomial weights satisfies a defining transition equation.
\end{abstract}

\maketitle

\section{Introduction}
Schubert polynomials were introduced by Lascoux and Schützenberger \cite{Lascoux82} and they represent cohomology classes called Schubert classes of the complete flag variety. The original definition was algebraic and relied on divided difference operators; however, multiple combinatorial formulas for the monomial expansion of Schubert polynomials were found \cite{assaf, magyar, FOMIN1996123, bergeron, Lam_2021}. Two such examples are Schubert polynomials as weight-generating functions of pipe dreams (originally called RC-graphs \cite{FOMIN1996123, bergeron}), or bumpless pipe dreams \cite{Lam_2021}. 
For example, for $\BPD(w)$, the set of (reduced) bumpless pipe dreams of a permutation $w$, one has \begin{equation}
\g_w(x, y) = \sum_{P\in \BPD(w)}\bwt(P) \,,\end{equation}
where $\bwt(P)$ is a product of binomials $(x_i-y_j)$ associated to $P\in \BPD(w)$ \cite{Lam_2021}.
These pipe dream and bumpless pipe dream formulations  generalize to some generalizations of Schubert polynomials, such as double Schubert polynomials and double Grothendieck polynomials \cite{KNUTSON2004161,WEIGANDT2021105470}.

Motivated by ideas that stem from string theory, mathematicians defined quantum cohomology rings \cite{vafa, witten}. See, e.g.,~\cite{Givental97} for more on the history of quantum cohomology. In the quantum cohomology of the complete flag variety, the Schubert classes correspond to quantum Schubert polynomials, another generalization of Schubert polynomials \cite{quantum}. Quantum double Schubert polynomials, which generalize both quantum Schubert polynomials and double Schubert polynomials, were defined in \cite{KIRILLOV2000191,cf}. Like Schubert polynomials, there is a quantum double Schubert polynomial for each permutation $w$ of $\{1, 2, \dots, n\}$, denoted $\g_w^q(x,y)$, lying in $\Z[x_1, \dots, x_n, y_1, \dots, y_n, q_1, \dots, q_{n-1}]$. There is no known combinatorial formulation for the monomial expansion of quantum Schubert polynomials or quantum double Schubert polynomials. One major difficulty is the presence of unpredictable signs in the monomial expansion of $\g_w^q(x,y)$. In this paper, we define combinatorial objects called quantum bumpless pipe dreams (QBPDs). They are a generalization of  bumpless pipe dreams, and their weight-generating function gives the quantum double Schubert polynomials, i.e., \begin{equation}
\g_w^q(x,y) = \sum_{P\in \QBPD(w)}\bwt(P)\,,\end{equation}
where $\QBPD(w)$ is the set of QBPDs of $w$ and $\bwt(P)$ is a product of $(x_i-y_j)$'s and $q_i$'s. This is stated precisely in Theorem \ref{thm:qdform}. 
Unfortunately, this formula has internal cancellation, but the combinatorics seems quite natural.

We prove this formula by showing the quantity $\sum\limits_{P\in \QBPD(w)}\bwt(P)$
satisfies a defining transition equation of the quantum double Schubert polynomials (Proposition \ref{prop:stabform}). We do this by constructing four bijections between terms on each side of the recursion, plus a family of additional bijections between terms that cancel out on one side. By showing how these bijections change the binomial weights, we establish the recursion and thus establish the main identity, Theorem ~\ref{thm:qdform}.

We give the necessary background in Section \ref{background}. In Section \ref{quantumbpd}, we define quantum bumpless pipe dreams, establish fundamental combinatorial properties, and state the main theorem. In Section \ref{droopandlift}, we provide a way to generate all QBPDs for a given permutation using droop moves as in \cite{Lam_2021}, as well as new moves called \textit{lift moves}. In Section \ref{stability}, we prove that the quantity $\sum\limits_{P\in \QBPD(w)}\bwt(P)$ satisfies the stability condition, i.e., it does not change under the natural inclusion map $i\colon S_n\to S_{n+1}$, which is needed for our proof of Theorem \ref{thm:qdform}. In Section \ref{proof33}, we prove Theorem \ref{thm:qdform}.
In Section \ref{cancel}, we provide some examples of (partial) cancellations of the binomial weight of QBPDs and provide tables analyzing the cancellations in small degrees. Finally, we provide some future directions in Section \ref{future}.
\subsection*{Acknowledgements}
This project started in the Fall 2023 Lab of Geometry at the University of Michigan, and the initial problem was suggested by George H. Seelinger. We are thankful to him for his mentorship during the project and for helpful comments, as well as suggestions on the writing of this paper. We would also like to thank Sergey Fomin for helpful thoughts and comments. Finally, we thank the anonymous referees for their detailed and useful suggestions.

\section{Background}\label{background}
We use the notation $[n]:=\{1, 2, \dots, n\}.$ Let $S_n$ be the symmetric group on $[n]$, i.e. the set of bijections from $[n]$ to $[n]$. To write a bijection $\sigma\colon [n]\to [n]$, we will use one-line notation, i.e., writing $\sigma(1)\sigma(2)\dots \sigma(n)$. We write $t_{ab}$ for the transposition that swaps $a$ and $b$, and write $s_i$ for the transposition $t_{i, i+1}$. The length of $w\in S_n$, denoted $\ell(w)$, is defined as the minimum number of adjacent transpositions $s_i$ required to express $w$ as their product. Any way to write $w$ as a product of exactly $\ell(w)$ adjacent transpositions is called a \textit{reduced word} for $w$. Let $w_0 := n\: n-1\dots 1$ be the longest permutation in $S_n$.

We write $\Z[x]$ for $\Z[x_1, \dots, x_n]$,  $\Z[x, y]$ for $\Z[x][y_1, \dots, y_n]$, $\Z[x, q]$ for $\Z[x][q_1, \dots, q_{n-1}]$ and $\Z[x, y, q]$ for $\Z[x, y][q_1, \dots, q_{n-1}]$.
\subsection{Rothe Diagrams}
Throughout this paper, we use matrix coordinate notation: $(i, j)$ means the box on row $i$ column $j$. For a permutation $\sigma : [n]\to [n]$, the \textit{Rothe diagram} of $\sigma$ is defined as follows.  The set of boxes $\{(i,\sigma(i)):i\in [n]\}$ are first marked with a dot in the grid. Then, starting from each dot and ending on edges of the grid, vertical lines are drawn downward and horizontal lines are drawn rightward. The resulting figure is the Rothe diagram for $\sigma$. 
\begin{figure}[h!]
        \centering
        \begin{tikzpicture}[scale=\figscale]
            \node at (0.5,-0.3){1};
\node at (1.5,-0.3){2};
\node at (2.5,-0.3){3};
\node at (3.5,-0.3){4};
\node at (4.3,0.5){4};
\node at (4.3,1.5){3};
\node at (4.3,2.5){2};
\node at (4.3,3.5){1};
            \draw (0, 0) grid (4,4);
            
            \draw[blue] (3.5, 3.5)--(4, 3.5); 
            \draw[blue] (3.5, 3.5)--(3.5, 0); 

            \draw[blue] (2.5, 0.5)--(4, 0.5);
            \draw[blue] (2.5, 0.5)--(2.5, 0); 

            \draw[blue] (1.5, 2.5)--(4, 2.5); 
            \draw[blue] (1.5, 2.5)--(1.5, 0); 

            \draw[blue] (0.5, 1.5)--(4, 1.5); 
            \draw[blue] (0.5, 1.5)--(0.5, 0); 
            
            \node[blue] at (3.5,3.5)[circle,fill,inner sep=1.5pt]{};
            \node[blue] at (2.5,0.5)[circle,fill,inner sep=1.5pt]{};
            \node[blue] at (1.5,2.5)[circle,fill,inner sep=1.5pt]{};
            \node[blue] at (0.5,1.5)[circle,fill,inner sep=1.5pt]{};
        \end{tikzpicture}
        \begin{tikzpicture}[scale=\figscale]
\node at (0.5,-0.3){1};
\node at (1.5,-0.3){2};
\node at (2.5,-0.3){3};
\node at (3.5,-0.3){4};
\node at (4.3,0.5){4};
\node at (4.3,1.5){3};
\node at (4.3,2.5){2};
\node at (4.3,3.5){1};
\draw (0, 0) grid (4,4);
\draw[blue](1,1.5) arc (90:180:0.5);
\draw[blue](0.5,0)--(0.5,1);
\draw[blue](2,2.5) arc (90:180:0.5);
\draw[blue](1.5,1)--(1.5,2);
\draw[blue](1,1.5)--(2,1.5);
\draw[blue](1.5,0)--(1.5,1);
\draw[blue](2,2.5)--(3,2.5);
\draw[blue](2,1.5)--(3,1.5);
\draw[blue](3,0.5) arc (90:180:0.5);
\draw[blue](4,3.5) arc (90:180:0.5);
\draw[blue](3.5,2)--(3.5,3);
\draw[blue](3,2.5)--(4,2.5);
\draw[blue](3.5,1)--(3.5,2);
\draw[blue](3,1.5)--(4,1.5);
\draw[blue](3.5,0)--(3.5,1);
\draw[blue](3,0.5)--(4,0.5);
\end{tikzpicture}
        \caption{Rothe diagram for $4213$}
        \label{fig:rothe}
    \end{figure}
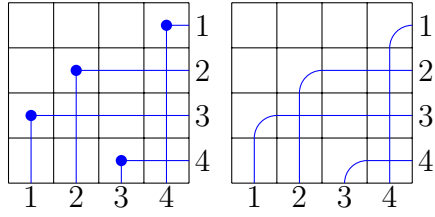
Note that the Rothe diagram for a permutation $\sigma$ can be turned into a bumpless pipe dream by ``smoothing'' the corners into \begin{tikzpicture}[scale=\inlinescale]
\draw (0, 0) grid (1,1);
\draw[blue](1,0.5) arc (90:180:0.5);
\end{tikzpicture} tiles, as in Figure \ref{fig:rothe}. (For the definition of bumpless pipe dream, see \cite{Lam_2021}, or Definition \ref{def:qbpd} below.)

\subsection{Double Schubert polynomials}

Consider the action of $S_n$ on $\Z[x, y]$ by permuting the $y$ variables; in particular, $s_i$ swaps $y_i$ and $y_{i+1}$:\begin{equation}
s_if(x, y_{1}, \dots, y_{i}, y_{i+1}, \dots, y_n) = f(x, y_{1}, \dots, y_{i+1}, y_{i}, \dots, y_n).\end{equation} 
We define divided difference operators $\partial_i^y$ as follows \begin{equation}
    \de_i^y(f) := \frac{f-s_if}{y_i-y_{i+1}}. 
\end{equation}
The divided difference operators $\partial_w^y$ for $w\in S_n$ are defined as follows. Let $ s_{a_1}\cdots s_{a_k}$ be any reduced word for $w$. Then, \begin{equation}\de_{w}^y = \de_{a_1}^y\cdots \de_{a_k}^y.\end{equation} Then, the double Schubert polynomials are defined as follows:
\begin{equation}\g_w(x,y) = \begin{cases}
     \prod\limits_{i+j\leq n}(x_i-y_j), & \text{if }w = w_0,\\
    (-1)^{\ell(w_{0})-\ell(w)}\de_{ww_0}^y\g_{w_0}(x,y), & \text{otherwise}.
\end{cases}\end{equation}
Specializing the $y$ variables to $0$ recovers the Schubert polynomials. 

\subsection{Quantum double Schubert polynomials}
As in \cite{quantum}, we define $E_i^k(x_1, \dots, x_k)$ to be the coefficient of $\lambda^i$ in the characteristic polynomial $\det(1+\lambda G_k)$ where \begin{equation}G_k = \begin{bmatrix}
    x_1 & q_1 & 0 &\dots & 0\\
    -1 & x_2 & q_2 &\dots & 0\\
    0 & -1 & x_3 &\dots & 0\\
    \vdots & \vdots &\vdots&\ddots & \vdots \\
    0 & 0 & 0 &\dots & x_k
\end{bmatrix}.\end{equation}
The quantum double Schubert polynomials are defined as follows. For the longest permutation $w_0$, we have \begin{equation}\g_{w_0}^q(x, y):=\prod_{k = 1}^{n-1} E_k^k(x_1-y_{n-k}, \dots, x_k-y_{n-k}),\end{equation}
and, for any permutation $w$, we have \begin{equation}
\g_{w}^q(x, y) = (-1)^{\ell(w_0)-\ell(w)}\partial_{ww_0}^y\g_{w_0}^q(x,y).    
\end{equation}
This is the definition as in 
\cite{cf}. Specializing the $q$ variables to $0$ recovers the double Schubert polynomials while specializing the $y$ variables to $0$ recovers the quantum Schubert polynomials. Setting both $y$ and $q$ variables to $0$ recovers the Schubert polynomials. 
\begin{theorem}[Monk's rule for quantum double Schubert polynomials \cite{qbsm}]
    For any $k$ and any permutation $w$,
\begin{align*}
    \g_{s_k}^q(x, y)\g_{w}^q(x,y) &= \sum_{\substack{a \leq k < b,\\ \ell(w  t_{ab}) = \ell(w)+1}}\g_{w t_{ab}}^q(x,y)+\sum_{\substack{c \leq k < d,\\ \ell(w t_{cd}) = \ell(w)-\ell(t_{cd})}}q_{cd}\g_{w t_{cd}}^q(x,y)\notag\\
    &+\sum_{i=1}^k(y_{w(i)}-y_i)\g_w^q(x, y),
\end{align*}
where $q_{cd}:= q_{c}q_{c+1}\dots q_{d-1}$.
\label{qdmonk}
\end{theorem}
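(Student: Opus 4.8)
The plan is to prove the identity by downward induction on $\ell(w)$, taking the longest permutation $w_0$ as the base case and propagating the formula to every $w$ through the divided difference operators $\de_i^y$ that define $\g_w^q$. Throughout I would use the elementary observation that $\g_{s_k}^q(x,y) = \sum_{i=1}^k (x_i - y_i)$: the permutation $s_k$ has length one while every $q_j$ carries degree two, so there is no room for quantum corrections and $\g_{s_k}^q$ coincides with the ordinary double Schubert polynomial.

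For the base case $w = w_0$ the first sum is empty, and since $\ell(w_0 u) = \ell(w_0) - \ell(u)$ for every $u \in S_n$ the length condition in the second sum holds automatically, so that sum ranges over all pairs $c \le k < d$. Thus I would verify
\[
\Bigl(\sum_{i=1}^k (x_i-y_i)\Bigr)\g_{w_0}^q
= \sum_{1 \le c \le k < d \le n} q_{cd}\,\g_{w_0 t_{cd}}^q
+ \Bigl(\sum_{i=1}^k (y_{n+1-i}-y_i)\Bigr)\g_{w_0}^q .
\]
Here I would substitute the explicit product $\g_{w_0}^q = \prod_{m=1}^{n-1} E_m^m(x_1-y_{n-m},\dots,x_m-y_{n-m})$ and rewrite each factor $\g_{w_0 t_{cd}}^q$ using $\g_{w_0 t_{cd}}^q = (-1)^{\ell(t_{cd})}\de^y_{w_0 t_{cd} w_0}\g_{w_0}^q$, reducing the whole statement to an identity among the quantum elementary symmetric polynomials $E_m^i$. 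That identity is exactly the recursion built into the characteristic polynomial $\det(1+\lambda G_m)$: cofactor expansion along the last row and column expresses $E_m^i$ through the $E_{m-1}^\bullet$ and the superdiagonal entry $q_{m-1}$, which is what produces the quantum terms, while the diagonal shift $x_i - y_{n-m}$ accounts for the equivariant coefficient $\sum_{i=1}^k (y_{n+1-i}-y_i)$. The $q = 0$ and equivariant pieces of this base identity are the familiar double Monk's rule at $w_0$; the genuinely new content is the matching of the quantum terms, which carries the bulk of the computation.

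For the inductive step, suppose the formula holds for $v$ with $\ell(v) = m+1$, choose $i$ with $\ell(s_i v) = m$, and set $w = s_i v$; since every $w \ne w_0$ admits such a $v$, this reaches all permutations. I would apply $\de_i^y$ to both sides of the formula for $v$, using $\de_i^y \g_u^q = -\g_{s_i u}^q$ when $\ell(s_i u) = \ell(u)-1$ and $0$ otherwise, together with $\de_i^y \g_{s_k}^q = -\delta_{ik}$ and $s_i\g_{s_k}^q = \g_{s_k}^q + \delta_{ik}(y_k - y_{k+1})$. The Leibniz rule $\de_i^y(fg) = (\de_i^y f)g + (s_i f)(\de_i^y g)$ applied to the product on the left and to each of the three sums on the right, after solving for $\g_{s_k}^q\g_w^q$, yields an expression I then compare term by term with Monk's rule for $w$. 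Because $\de_i^y$ sends $v t_{ab} \mapsto s_i v\, t_{ab} = w t_{ab}$, left descent is compatible with the right-multiplicative labelling by transpositions; what must be checked is that the length conditions $\ell(v t_{ab}) = \ell(v)+1$ and $\ell(v t_{cd}) = \ell(v)-\ell(t_{cd})$ transform into the corresponding conditions for $w$ under left multiplication by $s_i$, using the standard combinatorics of how a simple reflection alters the lengths of $v t$ in Bruhat order.

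The principal obstacle I anticipate is the $i = k$ case of the inductive step and, more broadly, the migration of terms among the three sums. When $i \ne k$ the correction terms vanish and the computation reduces to applying $\de_i^y$ to the right-hand side and negating; but when $i = k$, the Leibniz correction $(s_k\g_{s_k}^q)(\de_k^y \g_v^q)$, the term $-\delta_{ik}\g_v^q$, and the divided difference of the equivariant sum $\sum_{j=1}^k(y_{v(j)}-y_j)\g_v^q$ all interact, and a boundary transposition (one of the form $t_{kb}$ or $t_{ck}$ straddling the wall between columns $k$ and $k+1$) can pass from the classical sum into the equivariant or quantum sum, or vanish, depending on the relative order of $i,i+1$ and the indices $a,b,c,d$. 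Verifying that the $q_{cd} = q_c\cdots q_{d-1}$ weights are tracked correctly through these transitions, and that each transposition lands in exactly the right one of the three sums after the length conditions change, is where an unavoidable and delicate case analysis lies; the base-case manipulation of the $E_m^i$, by contrast, is lengthy but essentially mechanical.
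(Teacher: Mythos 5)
First, a point of comparison: the paper does not prove Theorem~\ref{qdmonk} at all --- it imports it from the cited reference --- so your proposal can only be judged on its own merits. On those merits it contains a fatal gap, and it is in the base case, which you describe as ``essentially mechanical.''

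The gap is your claim that for $w=w_0$ the first sum is empty. For Monk's rule to be an identity in $\Z[x,y,q]$ (which is how the paper uses it in Proposition~\ref{prop:stabform}), the transpositions must be allowed to leave $S_n$: the natural ambient group is $S_\infty$. In that setting the first sum at $w_0\in S_n$ is \emph{not} empty: for every $a\le k$ the transposition $t_{a,n+1}$ satisfies $\ell(w_0t_{a,n+1})=\ell(w_0)+1$ (a blocking index $c$ would need $a<c<n+1$ and $w_0(a)<w_0(c)$, forcing $c<a$), so the terms $\g^q_{w_0t_{a,n+1}}$, indexed by permutations in $S_{n+1}$, all appear. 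If instead you truncate every index at $n$ so that the first sum genuinely vanishes, the identity you propose to verify is false. The cleanest way to see this is to set $q=0$: your base-case identity then reduces to $\bigl(\sum_{i=1}^k(x_i-y_{n+1-i})\bigr)\g_{w_0}(x,y)=0$, which is absurd; already the classical double Monk rule at $w_0$ needs the $S_{n+1}$ terms. Keeping the $q$'s, the smallest case $n=2$, $k=1$ reads $(x_1-y_1)^2=q_1+(y_2-y_1)(x_1-y_1)$, i.e.\ $(x_1-y_1)(x_1-y_2)=q_1$, which holds only modulo the quantum ideal (it is the defining relation of $QH^*_T(\P^1)$), not as polynomials; the missing term is exactly $\g^q_{312}=(x_1-y_1)(x_1-y_2)-q_1$, computable from the paper's definition. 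So no manipulation of the $E^i_m$ will establish your base case: the identity is simply not true.

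This is structural rather than a repairable slip. A downward induction on length inside $S_N$ must start at $w_0^{(N)}$, but the correct Monk identity there involves $\g^q_{w_0^{(N)}t_{a,N+1}}$ --- polynomials attached to elements of $S_{N+1}$, which enjoy no product formula and whose own Monk identities live one level higher still --- so the induction never bottoms out. (Your inductive step, by contrast, is sound in outline: $\de_i^y\g^q_u=-\g^q_{s_iu}$ when $\ell(s_iu)<\ell(u)$ and $0$ otherwise is correct under the paper's conventions, left multiplication by $s_i$ is compatible with the right labels $t_{ab}$, and the $i=k$ migration analysis is the genuinely delicate point; this is the standard route for the classical equivariant Monk rule.) To salvage the strategy you would need either to prove the $w_0^{(N)}$ identity with the $S_{N+1}$ terms included, or to change frameworks entirely: prove the rule modulo the quantum ideal in $QH^*_T(Fl_N)$ for every $N$, then recover the polynomial identity from stability of the quantum double Schubert polynomials together with the fact that a fixed polynomial lying in the quantum ideal for all $N$ must vanish. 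Either repair is a substantially different proof from the one you outline.
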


\section{Quantum bumpless pipe dreams}\label{quantumbpd}
\begin{definition}\label{def:qbpd}
A \textit{quantum bumpless pipe dream} (QBPD) is a tiling of an $n\times n$ grid filled with tiles
\begin{center}
    \begin{tikzpicture}[scale=\figscale]
\draw (0, 0) grid (1,1);
\end{tikzpicture}
\begin{tikzpicture}[scale=\figscale]
\draw (0, 0) grid (1,1);
\draw[blue](1,0.5) arc (90:180:0.5);
\end{tikzpicture}
\begin{tikzpicture}[scale=\figscale]
\draw (0, 0) grid (1,1);
\draw[blue](0.5,0)--(0.5,1);
\draw[blue](0,0.5)--(1,0.5);
\end{tikzpicture}
\begin{tikzpicture}[scale=\figscale]
\draw (0, 0) grid (1,1);
\draw[blue](0,0.5) arc (-90:0:0.5);
\end{tikzpicture}
\begin{tikzpicture}[scale=\figscale]
\draw (0, 0) grid (1,1);
\draw[blue](0.5,0)--(0.5,1);
\end{tikzpicture}
\begin{tikzpicture}[scale=\figscale]
\draw (0, 0) grid (1,1);
\draw[blue](0,0.5)--(1,0.5);
\end{tikzpicture}
\begin{tikzpicture}[scale=\figscale]
\draw (0, 0) grid (1,1);
\draw[blue](0.5,0) arc (0:90:0.5);
\end{tikzpicture}
\begin{tikzpicture}[scale=\figscale]
\draw (0, 0) grid (1,1);
\draw[blue](0.5,1) arc (180:270:0.5);
\end{tikzpicture}
\begin{tikzpicture}[scale=\figscale]
\draw (0, 0) grid (1,1);
\draw[fill=white,draw=black](0,0) rectangle (1,2);
\end{tikzpicture}
\end{center}

so that
\begin{itemize}
    \item The tiling forms $n$ pipes;
    \item Each pipe starts horizontally at the right edge of the grid and ends vertically at the bottom edge of the grid;    \item The pipes only move upward, downward, or leftward (but not rightward) when moving from the right edge to the bottom edge;
    \item No two pipes cross more than once.
\end{itemize}

 The last tile is a $2\times 1$ domino tile, which occupies two vertically adjacent empty squares in the grid. A (non-quantum) \textit{bumpless pipe dream} (BPD) (as defined in \cite{Lam_2021}), is a QBPD in which the last three tiles above (\begin{tikzpicture}[scale=\inlinescale]
\draw (0, 0) grid (1,1);
\draw[blue](0.5,0) arc (0:90:0.5);
\end{tikzpicture}, 
\begin{tikzpicture}[scale=\inlinescale]
\draw (0, 0) grid (1,1);
\draw[blue](0.5,1) arc (180:270:0.5);
\end{tikzpicture}, and the domino tile) do not appear, so in a BPD pipes only move downward and leftward. The last condition that no two pipes cross more than once is the \textit{reducedness} condition. Throughout this paper, all quantum bumpless pipe dreams are reduced, though in some proofs we may use the term ``reduced quantum bumpless pipe dream'' to stress that the reducedness condition holds when it is not immediately clear why.
\end{definition}
\begin{figure}[h!]
    \centering
    \begin{tikzpicture}[scale=\figscale]
\draw (0, 0) grid (5,5);
\draw[blue](1,4.5) arc (90:180:0.5);
\draw[blue](0.5,3)--(0.5,4);
\draw[blue](0.5,2)--(0.5,3);
\draw[blue](0.5,1)--(0.5,2);
\draw[blue](0.5,0)--(0.5,1);
\draw[blue](1,4.5)--(2,4.5);
\draw[blue](2,3.5) arc (90:180:0.5);
\draw[blue](1.5,3) arc (180:270:0.5);
\draw[blue](2,1.5) arc (90:180:0.5);
\draw[blue](1.5,0)--(1.5,1);
\draw[blue](2.5,4) arc (0:90:0.5);
\draw[blue](2,3.5) arc (-90:0:0.5);
\draw[blue](2,2.5)--(3,2.5);
\draw[blue](2,1.5)--(3,1.5);
\draw[blue](3,0.5) arc (90:180:0.5);
\draw[blue](4,4.5) arc (90:180:0.5);
\draw[blue](3.5,3)--(3.5,4);
\draw[blue](3.5,2)--(3.5,3);
\draw[blue](3,2.5)--(4,2.5);
\draw[blue](3.5,1)--(3.5,2);
\draw[blue](3,1.5)--(4,1.5);
\draw[blue](3.5,0)--(3.5,1);
\draw[blue](3,0.5)--(4,0.5);
\draw[blue](4,4.5)--(5,4.5);
\draw[blue](5,3.5) arc (90:180:0.5);
\draw[blue](4.5,2)--(4.5,3);
\draw[blue](4,2.5)--(5,2.5);
\draw[blue](4.5,1)--(4.5,2);
\draw[blue](4,1.5)--(5,1.5);
\draw[blue](4.5,0)--(4.5,1);
\draw[blue](4,0.5)--(5,0.5);
\end{tikzpicture}
    \caption{A non-example of a QBPD}
    \label{fig:nonexqbpd}
\end{figure}
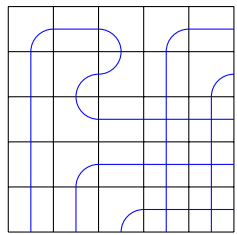
\begin{example}
    Figure \ref{fig:nonexqbpd} shows a non-example of a QBPD. The pipe starting on row $3$ moves rightward in the tiles $(2,2)$ and $(2,3)$, which violates Definition \ref{def:qbpd}.
\end{example}
\begin{definition}\label{def:bwt}
    The \textit{binomial weight} for a QBPD $P$, denoted $\bwt(P)$, is the product of factors contributed by the following rules:
\end{definition}

\begin{itemize}
    \item An empty tile \begin{tikzpicture}[scale=\inlinescale]
\draw (0, 0) grid (1,1);
\end{tikzpicture} on row $i$, column $j$ contributes $x_i-y_j$;
    \item A domino tile
    \begin{tikzpicture}[scale=\inlinescale]
\draw (0, 0) grid (1,1);
\draw[fill=white,draw=black](0,0) rectangle (1,2);
\end{tikzpicture}
    whose upper cell is on row $i$ contributes $q_i$;
    \item A cross tile \begin{tikzpicture}[scale=\inlinescale]
\draw (0, 0) grid (1,1);
\draw[blue](0.5,0)--(0.5,1);
\draw[blue](0,0.5)--(1,0.5);
\end{tikzpicture} on row $i$ where the vertical strand moves upwards contributes $q_i$;
    \item A southwest elbow \begin{tikzpicture}[scale=\inlinescale]
\draw (0, 0) grid (1,1);
\draw[blue](0.5,0) arc (0:90:0.5);
\end{tikzpicture} on row $i$ contributes $-q_i$;
    \item A vertical tile \begin{tikzpicture}[scale=\inlinescale]
\draw (0, 0) grid (1,1);
\draw[blue](0.5,0)--(0.5,1);
\end{tikzpicture} on row $i$ where the strand moves upward contributes $-q_i$.
\end{itemize}
In other words, let $P(i, j)$ denote the cell on row $i$ column $j$, and let \[E(P) := \{(i, j):P(i,j)\text{ is a single empty cell}\},\]
\[Q(P) := \{(i, j):P(i, j)\text{ is the upper cell of a domino or a \begin{tikzpicture}[scale=\inlinescale]
\draw (0, 0) grid (1,1);
\draw[blue](0.5,0)--(0.5,1);
\draw[blue](0,0.5)--(1,0.5);
\end{tikzpicture} tile }\]\[\text{in which the vertical strand moves upward}\},\]
and
\[NQ(P) := \{(i, j):P(i, j)\text{ is a \begin{tikzpicture}[scale=\inlinescale]
\draw (0, 0) grid (1,1);
\draw[blue](0.5,0) arc (0:90:0.5);
\end{tikzpicture} tile or a \begin{tikzpicture}[scale=\inlinescale]
\draw (0, 0) grid (1,1);
\draw[blue](0.5,0)--(0.5,1);
\end{tikzpicture} tile }\]\[\text{in which the strand moves upward}\}.\]
Then, \begin{align}\bwt(P)&:= \prod_{(i, j)\in E(P)}(x_i-y_j)\prod_{(i, j)\in Q(P)}q_i\prod_{(i, j)\in NQ(P)}(-q_i)\notag\\
&= (-1)^{|NQ(P)|}\prod_{(i, j)\in E(P)}(x_i-y_j)\prod_{(i, j)\in Q(P)\cup NQ(P)}q_i \,.\end{align}

A QBPD $P$ is said to be associated with a permutation $w$ if the pipe starting on the right on row $i$ ends up on column $w(i)$. 

Let $\QBPD(w)$ denote the set of QBPDs associated to $w$. Our main result is the following:
\begin{theorem}\label{thm:qdform}
The quantum double Schubert polynomial indexed by $w \in S_n$ is the sum of binomial weights of all QBPDs associated to $w$:
\begin{equation*}
    \g^q_w(x,y) = \sum_{P\in \QBPD(w)}\bwt(P)\,.
\end{equation*}
\end{theorem}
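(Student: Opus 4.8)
The plan is to set $F_w := \sum_{P\in\QBPD(w)}\bwt(P)$ and prove $F_w=\g^q_w$ for all $w$ by showing that the family $\{F_w\}$ obeys the same defining recursion and base case as $\{\g^q_w\}$. The recursion I would use is a specialization of Monk's rule (Theorem \ref{qdmonk}) at $k=1$. Since $s_1$ is short, $\g^q_{s_1}=x_1-y_1$, and the diagonal correction in Monk's rule collapses to the single term $(y_{w(1)}-y_1)\g^q_w$; moving it to the left-hand side yields the clean transition equation
\begin{equation*}
(x_1-y_{w(1)})\,\g^q_w = \sum_{\substack{1<b\\ \ell(wt_{1b})=\ell(w)+1}}\g^q_{wt_{1b}} \;+\; \sum_{\substack{1<d\\ \ell(wt_{1d})=\ell(w)-\ell(t_{1d})}} q_{1d}\,\g^q_{wt_{1d}},
\end{equation*}
with $q_{1d}=q_1q_2\cdots q_{d-1}$. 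My goal then becomes to prove the identical identity with $\g^q$ replaced by $F$ throughout, together with $F_{\id}=1$.

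First I would check that this recursion is genuinely defining. Reading it with $w$ of a fixed length, the left-hand side and the quantum sum involve permutations of length $\le\ell(w)$, while the first sum involves the length-$(\ell(w)+1)$ permutations $wt_{1b}$; ranging over all $w$, these relations form a triangular system that, starting from the normalization at $\id$, determines every class by induction on length --- this is the standard fact that multiplication by the degree-one classes generates the (quantum) cohomology. Consequently it suffices to verify the two combinatorial statements $F_{\id}=1$ and the displayed recursion for $F$. The base case is immediate: the identity admits a unique reduced QBPD, the all-elbow diagonal with no empty or quantum tiles, so $F_{\id}=1$. Here I would also invoke the stability of $F$ under the inclusion $S_n\hookrightarrow S_{n+1}$ (Section \ref{stability}): it lets me evaluate the recursion after embedding into $S_{n+1}$, which is what makes the sum over $b$ (in particular the transposition $t_{1,n+1}$) complete, independent of the ambient $n$.

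The combinatorial heart is verifying the recursion for $F$. The factor $x_1-y_{w(1)}$ is exactly the weight of an empty cell in row $1$, column $w(1)$, so I would read the product $(x_1-y_{w(1)})F_w$ as a generating function over QBPDs of $w$ decorated at the top-left corner, and match it term by term against the right-hand side using the droop and lift moves of Section \ref{droopandlift}. Concretely, I expect four structural bijections: droop-type moves that create one new crossing and raise length produce the $F_{wt_{1b}}$ contributions; lift moves that introduce a domino or an upward-moving strand produce the quantum contributions $q_{1d}F_{wt_{1d}}$ with the length drop $\ell(t_{1d})$; and the remaining bijections absorb the distinguished empty cell carrying the binomial $x_1-y_{w(1)}$. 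On top of these, a family of sign-reversing involutions pairs the QBPDs whose contributions cancel internally, matching a $+q_i$ tile against a $-q_i$ tile so that those terms drop out of the sum.

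The main obstacle will be the sign and weight bookkeeping. Each move alters the sets $E(P),Q(P),NQ(P)$ of Definition \ref{def:bwt}, and I must show that it multiplies $\bwt(P)$ by exactly the intended factor ($1$, or $q_{1d}$) while respecting the sign $(-1)^{|NQ(P)|}$; in particular I must verify that the canceling family really is a \emph{sign-reversing} involution, i.e.\ that paired diagrams have equal binomial parts but opposite signs, so the internal cancellation is exact. Alongside this I must confirm that every move preserves reducedness and the endpoints that encode the target permutation, and that no QBPD is counted twice across the four bijections. Once these weight calculations are in place, the recursion for $F$ follows, and uniqueness yields $F_w=\g^q_w$, which is Theorem \ref{thm:qdform}.
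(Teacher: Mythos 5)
Your overall strategy coincides with the paper's: set $F_w=\sum_{P\in\QBPD(w)}\bwt(P)$, verify a Monk-derived transition equation plus the normalization $F_{\id}=1$, invoke stability, and conclude by uniqueness of the recursion. The gap is in the recursion you chose. The $k=1$ specialization of Theorem~\ref{qdmonk},
\begin{equation*}
(x_1-y_{w(1)})\,F_w \;=\; \sum_{\substack{b>1,\\ wt_{1b}\gtrdot w}} F_{wt_{1b}} \;+\; \sum_{\substack{d>1,\\ wt_{1d}\lhd w}} q_{1d}\,F_{wt_{1d}},
\end{equation*}
is a true identity for $\g^q$, but it is \emph{not} a defining recursion, so establishing it for $F$ would not prove Theorem~\ref{thm:qdform}. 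The triangularity you assert fails for a concrete reason: a permutation $u$ can occur as $wt_{1b}\gtrdot w$ only if $w=ut_{1b}$ is shorter, which forces $u(1)>u(b)$; hence any $u$ with $u(1)=1$ (already $u=s_2$) never appears in the upward sum of any equation. Such an $F_u$ occurs only in its own equation --- where it sits on the left together with brand-new unknowns of length $\ell(u)+1$ on the right (for $u=s_2$: $(x_1-y_1)F_{s_2}=F_{312}+F_{231}$) --- and in $q$-weighted downward terms of equations indexed by even longer permutations. So induction on length can never isolate it; indeed one can check that the associated homogeneous system admits nonzero solutions in which the values at permutations fixing $1$ are free parameters that propagate upward consistently, so your system plus $F_{\id}=1$ does not pin down the family. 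Your appeal to ``multiplication by degree-one classes generates quantum cohomology'' would require Monk's rule for \emph{all} $k$, which is a far larger verification than the one you sketch. The paper circumvents exactly this issue by a deliberately nonstandard choice (Proposition~\ref{prop:stabform}): taking $n$ the last non-fixed point of $\pi$, $a=\pi^{-1}(n)$, $b$ with $\pi(b)=\max_{a<i\le n}\pi(i)$, and subtracting the $s_a$ and $s_{a-1}$ Monk identities for $\sigma=\pi t_{ab}$, which makes $b$ the \emph{unique} index $>a$ with $\sigma t_{ab}\gtrdot\sigma$; the upward sum then collapses to the single term $\g^q_\pi$, yielding a recursion over a well-founded order. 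The paper's remark after that proposition stresses that even the usual last-descent transition does not work for this argument; your $k=1$ version is weaker still.

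The same choice is what makes the combinatorial half feasible, and your plan inherits the defect. Your bijections would have to be anchored at the cell $(1,w(1))$, about which QBPDs of $w$ have no rigid structure. The paper's maps $\phi_A,\phi_B,\phi_C,\phi_D,\phi_i$ exist only because the extremal choice of $a$, $b$, $m=\sigma(a)$ forces every QBPD of every permutation appearing in the transition equation into one of finitely many explicit configurations on the bottom-right region $[a,n]\times[m,n]$ (Proposition~\ref{prop:configpi}, Proposition~\ref{prop:configsig}, and the subsequent configuration results); the canceling family is likewise tied to the chain $a_1>a_2>\dots>a_k$ of that setup rather than being a generic sign-reversing involution. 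Without that configuration control, ``match terms using droops, lifts, and a sign-reversing involution'' is a restatement of the problem rather than a proof.
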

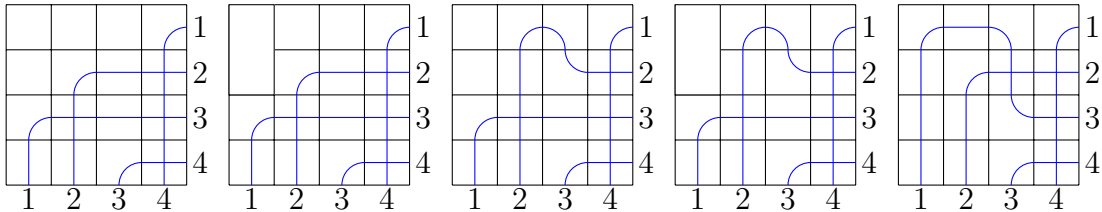
\begin{figure}[!h]
    \centering
\begin{tikzpicture}[scale=\figscale]
\node at (0.5,-0.3){1};
\node at (1.5,-0.3){2};
\node at (2.5,-0.3){3};
\node at (3.5,-0.3){4};
\node at (4.3,0.5){4};
\node at (4.3,1.5){3};
\node at (4.3,2.5){2};
\node at (4.3,3.5){1};
\draw (0, 0) grid (4,4);
\draw[blue](1,1.5) arc (90:180:0.5);
\draw[blue](0.5,0)--(0.5,1);
\draw[blue](2,2.5) arc (90:180:0.5);
\draw[blue](1.5,1)--(1.5,2);
\draw[blue](1,1.5)--(2,1.5);
\draw[blue](1.5,0)--(1.5,1);
\draw[blue](2,2.5)--(3,2.5);
\draw[blue](2,1.5)--(3,1.5);
\draw[blue](3,0.5) arc (90:180:0.5);
\draw[blue](4,3.5) arc (90:180:0.5);
\draw[blue](3.5,2)--(3.5,3);
\draw[blue](3,2.5)--(4,2.5);
\draw[blue](3.5,1)--(3.5,2);
\draw[blue](3,1.5)--(4,1.5);
\draw[blue](3.5,0)--(3.5,1);
\draw[blue](3,0.5)--(4,0.5);
\end{tikzpicture}
\begin{tikzpicture}[scale=\figscale]
\node at (0.5,-0.3){1};
\node at (1.5,-0.3){2};
\node at (2.5,-0.3){3};
\node at (3.5,-0.3){4};
\node at (4.3,0.5){4};
\node at (4.3,1.5){3};
\node at (4.3,2.5){2};
\node at (4.3,3.5){1};
\draw (0, 0) grid (4,4);
\draw[fill=white,draw=black](0,2) rectangle (1,4);
\draw[blue](1,1.5) arc (90:180:0.5);
\draw[blue](0.5,0)--(0.5,1);
\draw[blue](2,2.5) arc (90:180:0.5);
\draw[blue](1.5,1)--(1.5,2);
\draw[blue](1,1.5)--(2,1.5);
\draw[blue](1.5,0)--(1.5,1);
\draw[blue](2,2.5)--(3,2.5);
\draw[blue](2,1.5)--(3,1.5);
\draw[blue](3,0.5) arc (90:180:0.5);
\draw[blue](4,3.5) arc (90:180:0.5);
\draw[blue](3.5,2)--(3.5,3);
\draw[blue](3,2.5)--(4,2.5);
\draw[blue](3.5,1)--(3.5,2);
\draw[blue](3,1.5)--(4,1.5);
\draw[blue](3.5,0)--(3.5,1);
\draw[blue](3,0.5)--(4,0.5);
\end{tikzpicture}
\begin{tikzpicture}[scale=\figscale]
\node at (0.5,-0.3){1};
\node at (1.5,-0.3){2};
\node at (2.5,-0.3){3};
\node at (3.5,-0.3){4};
\node at (4.3,0.5){4};
\node at (4.3,1.5){3};
\node at (4.3,2.5){2};
\node at (4.3,3.5){1};
\draw (0, 0) grid (4,4);
\draw[blue](1,1.5) arc (90:180:0.5);
\draw[blue](0.5,0)--(0.5,1);
\draw[blue](2,3.5) arc (90:180:0.5);
\draw[blue](1.5,2)--(1.5,3);
\draw[blue](1.5,1)--(1.5,2);
\draw[blue](1,1.5)--(2,1.5);
\draw[blue](1.5,0)--(1.5,1);
\draw[blue](2.5,3) arc (0:90:0.5);
\draw[blue](2.5,3) arc (180:270:0.5);
\draw[blue](2,1.5)--(3,1.5);
\draw[blue](3,0.5) arc (90:180:0.5);
\draw[blue](4,3.5) arc (90:180:0.5);
\draw[blue](3.5,2)--(3.5,3);
\draw[blue](3,2.5)--(4,2.5);
\draw[blue](3.5,1)--(3.5,2);
\draw[blue](3,1.5)--(4,1.5);
\draw[blue](3.5,0)--(3.5,1);
\draw[blue](3,0.5)--(4,0.5);
\end{tikzpicture}
\begin{tikzpicture}[scale=\figscale]
\node at (0.5,-0.3){1};
\node at (1.5,-0.3){2};
\node at (2.5,-0.3){3};
\node at (3.5,-0.3){4};
\node at (4.3,0.5){4};
\node at (4.3,1.5){3};
\node at (4.3,2.5){2};
\node at (4.3,3.5){1};
\draw (0, 0) grid (4,4);
\draw[fill=white,draw=black](0,2) rectangle (1,4);
\draw[blue](1,1.5) arc (90:180:0.5);
\draw[blue](0.5,0)--(0.5,1);
\draw[blue](2,3.5) arc (90:180:0.5);
\draw[blue](1.5,2)--(1.5,3);
\draw[blue](1.5,1)--(1.5,2);
\draw[blue](1,1.5)--(2,1.5);
\draw[blue](1.5,0)--(1.5,1);
\draw[blue](2.5,3) arc (0:90:0.5);
\draw[blue](2.5,3) arc (180:270:0.5);
\draw[blue](2,1.5)--(3,1.5);
\draw[blue](3,0.5) arc (90:180:0.5);
\draw[blue](4,3.5) arc (90:180:0.5);
\draw[blue](3.5,2)--(3.5,3);
\draw[blue](3,2.5)--(4,2.5);
\draw[blue](3.5,1)--(3.5,2);
\draw[blue](3,1.5)--(4,1.5);
\draw[blue](3.5,0)--(3.5,1);
\draw[blue](3,0.5)--(4,0.5);
\end{tikzpicture}
\begin{tikzpicture}[scale=\figscale]
\node at (0.5,-0.3){1};
\node at (1.5,-0.3){2};
\node at (2.5,-0.3){3};
\node at (3.5,-0.3){4};
\node at (4.3,0.5){4};
\node at (4.3,1.5){3};
\node at (4.3,2.5){2};
\node at (4.3,3.5){1};
\draw (0, 0) grid (4,4);
\draw[blue](1,3.5) arc (90:180:0.5);
\draw[blue](0.5,2)--(0.5,3);
\draw[blue](0.5,1)--(0.5,2);
\draw[blue](0.5,0)--(0.5,1);
\draw[blue](1,3.5)--(2,3.5);
\draw[blue](2,2.5) arc (90:180:0.5);
\draw[blue](1.5,1)--(1.5,2);
\draw[blue](1.5,0)--(1.5,1);
\draw[blue](2.5,3) arc (0:90:0.5);
\draw[blue](2.5,2)--(2.5,3);
\draw[blue](2,2.5)--(3,2.5);
\draw[blue](2.5,2) arc (180:270:0.5);
\draw[blue](3,0.5) arc (90:180:0.5);
\draw[blue](4,3.5) arc (90:180:0.5);
\draw[blue](3.5,2)--(3.5,3);
\draw[blue](3,2.5)--(4,2.5);
\draw[blue](3.5,1)--(3.5,2);
\draw[blue](3,1.5)--(4,1.5);
\draw[blue](3.5,0)--(3.5,1);
\draw[blue](3,0.5)--(4,0.5);
\end{tikzpicture}
    \caption{QBPDs for $4213$}
    \label{qbpd}
\end{figure}
\begin{example}From Figure \ref{qbpd}, we have\begin{align*}
\g^q_{4213}(x,y) &= (x_1-y_1)(x_1-y_2)(x_1-y_3)(x_2-y_1)+ q_1(x_1-y_2)(x_1-y_3)\\&+(x_1-y_1)(x_2-y_1)(-q_1) + q_1(-q_1)+(-q_1)q_2\,.\end{align*}
\end{example}
As a corollary, we have the following formula.

\begin{corollary}\label{ds}
The quantum Schubert polynomial indexed by $w \in S_n$ is the sum of monomial weights of all QBPDs associated to $w$:
\begin{equation*}\g^q_w(x) = \sum_{P\in \QBPD(w)}\wt(P)\end{equation*}
where \begin{equation*}\wt(P) := \prod_{(i, j)\in E(P)}x_i\prod_{(i, j)\in Q(P)}q_i\prod_{(i, j)\in NQ(P)}(-q_i)\,.\end{equation*}
\end{corollary}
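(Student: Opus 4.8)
The plan is to derive this as an immediate specialization of Theorem \ref{thm:qdform}. Recall from the background that setting all $y$ variables to $0$ in the quantum double Schubert polynomial $\g^q_w(x,y)$ recovers the quantum Schubert polynomial $\g^q_w(x)$. So I would apply the substitution $y_1 = \cdots = y_n = 0$ to both sides of the identity $\g^q_w(x,y) = \sum_{P \in \QBPD(w)} \bwt(P)$ provided by Theorem \ref{thm:qdform}, and read off the resulting equation.

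The left-hand side becomes $\g^q_w(x)$ by definition. For the right-hand side, I would first observe that the index set of the sum is unaffected by the specialization: the set $\QBPD(w)$ is a purely combinatorial object depending only on $w$, with no reference to the numerical values of the $y$ variables. It then remains to check that $\bwt(P)\big|_{y=0} = \wt(P)$ for each fixed $P \in \QBPD(w)$. Comparing Definition \ref{def:bwt} with the definition of $\wt(P)$ in the statement, the only factors of $\bwt(P)$ that involve any $y$ variable are the empty-tile contributions $(x_i - y_j)$ for $(i,j) \in E(P)$; each of these specializes to $x_i$ under $y_j = 0$. Every remaining factor (the $q_i$ coming from cells in $Q(P)$ and the $-q_i$ coming from cells in $NQ(P)$) carries no $y$ variable and is therefore unchanged. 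Hence $\bwt(P)\big|_{y=0}$ equals $\prod_{(i,j) \in E(P)} x_i \prod_{(i,j) \in Q(P)} q_i \prod_{(i,j) \in NQ(P)}(-q_i) = \wt(P)$, term by term.

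Combining these observations, the specialized identity reads $\g^q_w(x) = \sum_{P \in \QBPD(w)} \wt(P)$, as claimed. I do not expect any genuine obstacle here: the entire content of the corollary is the compatibility of the two weight functions under $y \to 0$, which is immediate from the tile-by-tile description of $\bwt$ in Definition \ref{def:bwt}. This is precisely why the statement is a corollary of Theorem \ref{thm:qdform} rather than a result requiring its own combinatorial argument.
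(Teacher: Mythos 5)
Your proof is correct and is exactly the argument the paper intends: the paper states this as an immediate corollary of Theorem \ref{thm:qdform} via the specialization $y \to 0$ (which by definition turns $\g^q_w(x,y)$ into $\g^q_w(x)$), with the tile-by-tile check that $\bwt(P)\big|_{y=0} = \wt(P)$ left implicit. Nothing is missing from your write-up.
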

 \subsection{Droop moves and lift moves}\label{droopandlift}

Droop moves on bumpless pipe dreams are defined in  \cite{Lam_2021}. They are moves of the form illustrated in Figure \ref{fig:droop}. Droop moves can be extended to the QBPD setting. We allow a droop move only if the result is a valid and reduced QBPD.
\begin{figure}[h!]
    \centering
    \begin{tikzpicture}[scale=\figscale]
    \draw (0, 0) grid (3,4);
    \draw[blue, ultra thick] (0.5, 1)--(0.5, 3); 
    \draw[blue] (0.5, 0)--(0.5, 1);
    \draw[blue](2,3.5) arc (-90:0:0.5);
    \draw[blue, ultra thick] (1,3.5) arc (90:180:0.5);
    \draw[blue, ultra thick] (1, 3.5)--(2, 3.5);
    \draw[blue] (2, 3.5)--(3, 3.5);
    \draw[blue](0,0.5) arc (-90:0:0.5);
    \draw[->] (3.5,2)--(4.5,2);
    \draw (5, 0) grid (8,4);
    \draw[blue, ultra thick] (7.5, 1)--(7.5, 3); 
    \draw[blue] (7.5, 3)--(7.5, 4);
    \draw[blue](6,0.5) arc (90:180:0.5);
    \draw[blue, ultra thick] (7,0.5) arc (-90:0:0.5);
    \draw[blue, ultra thick] (6, 0.5)--(7, 0.5);
    \draw[blue] (5, 0.5)--(6, 0.5);
    \draw[blue](8,3.5) arc (90:180:0.5);
\end{tikzpicture}
\caption{A droop move (light color indicates possibilities)}
    \label{fig:droop}
\end{figure}
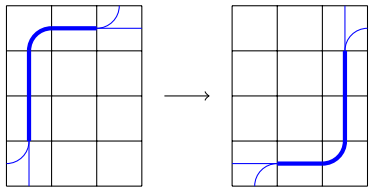

In \cite{Lam_2021}, Lam, Lee, and Shimozono proved that any bumpless pipe dream of a given permutation can be obtained from the Rothe diagram by a sequence of droops. To generate all QBPDs, we introduce other moves called \textit{lift moves}.

The lift moves are moves in which a horizontal segment of a strand is ``lifted up'' into a detour that goes up, moves left, and then moves back down. Figure \ref{fig:lift} shows an example of a lift move. There might be other unpictured pipes in the picture as long as the result is a valid and reduced QBPD.
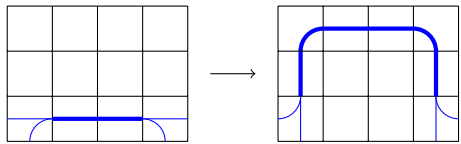
\begin{figure}[!h]
    \centering
    \begin{tikzpicture}[scale=\figscale]
    \draw (0, 0) grid (4,3);
    \draw[blue, ultra thick] (6.5, 1)--(6.5, 2); 
    \draw[blue] (0, 0.5)--(1, 0.5);
    \draw[blue](3.5,0) arc (0:90:0.5);
    \draw[blue, ultra thick] (7,2.5) arc (90:180:0.5);
    \draw[blue, ultra thick] (1, 0.5)--(3, 0.5);
    \draw[blue] (3, 0.5)--(4, 0.5);
    \draw[blue](0.5,0) arc (180:90:0.5);
    \draw[->] (4.5,1.5)--(5.5,1.5);
    \draw (6, 0) grid (10,3);
    \draw[blue, ultra thick] (9.5, 1)--(9.5, 2); 
    \draw[blue] (6.5, 0)--(6.5, 1);
    \draw[blue](6,0.5) arc (-90:0:0.5);
    \draw[blue, ultra thick] (9,2.5) arc (90:0:0.5);
    \draw[blue, ultra thick] (7, 2.5)--(9, 2.5);
    \draw[blue] (9.5, 0)--(9.5, 1);
    \draw[blue](10, 0.5) arc (-90:-180:0.5);
\end{tikzpicture}
\caption{A lift move (light color indicates possibilities)}
    \label{fig:lift}
\end{figure}

A QBPD is said to be \textit{unpaired} if it has no domino tile. To generate all QBPDs, we can generate all unpaired QBPDs and find all ways to pair empty boxes into dominos. Note that droops moves and lift moves preserve the permutation associated to the QBPD. It turns out (Lemma \ref{lem:droopliftgen} below) that all unpaired QBPD for a permutation can be generated from the Rothe diagram using only droop and lift moves. To prove this, we first need some technical definitions and lemmas.

\begin{definition}\label{def:simple}
    A \textit{leftmost region} is a region given by all columns to the left of a given column. A \textit{simple region} is a region in which the tiles containing pipes are all \begin{tikzpicture}[scale=\inlinescale]
\draw (0, 0) grid (1,1);
\draw[blue](0,0.5)--(1,0.5);
\end{tikzpicture}, \begin{tikzpicture}[scale=\inlinescale]
\draw (0, 0) grid (1,1);
\draw[blue](0.5,0)--(0.5,1);
\end{tikzpicture}, \begin{tikzpicture}[scale=\inlinescale]
\draw (0, 0) grid (1,1);
\draw[blue](0.5,0)--(0.5,1);
\draw[blue](0,0.5)--(1,0.5);
\end{tikzpicture}, or \begin{tikzpicture}[scale=\inlinescale]
\draw (0, 0) grid (1,1);
\draw[blue](1,0.5) arc (90:180:0.5);
\end{tikzpicture} tiles. A strand is said to have the \textit{simple form} if it goes straight left, takes a \begin{tikzpicture}[scale=\inlinescale]
\draw (0, 0) grid (1,1);
\draw[blue](1,0.5) arc (90:180:0.5);
\end{tikzpicture} turn, then goes straight down.
\end{definition}
\begin{lemma}\label{simple}
    If \begin{tikzpicture}[scale=\inlinescale]
\draw (0, 0) grid (1,1);
\draw[blue](0,0.5) arc (-90:0:0.5);
\end{tikzpicture} and \begin{tikzpicture}[scale=\inlinescale]
\draw (0, 0) grid (1,1);
\draw[blue](0.5,0) arc (0:90:0.5);
\end{tikzpicture} tiles do not exist in a leftmost region of a QBPD, then that region of the QBPD must be simple. Furthermore, every strand has the simple form in that leftmost region.

 \begin{proof}
If \begin{tikzpicture}[scale=\inlinescale]
\draw (0, 0) grid (1,1);
\draw[blue](0,0.5) arc (-90:0:0.5);
\end{tikzpicture} and \begin{tikzpicture}[scale=\inlinescale]
\draw (0, 0) grid (1,1);
\draw[blue](0.5,0) arc (0:90:0.5);
\end{tikzpicture} do not exist in some leftmost region of the QBPD, then there is also no \begin{tikzpicture}[scale=\inlinescale]
\draw (0, 0) grid (1,1);
\draw[blue](0.5,1) arc (180:270:0.5);
\end{tikzpicture} tile, since if a \begin{tikzpicture}[scale=\inlinescale]
\draw (0, 0) grid (1,1);
\draw[blue](0.5,1) arc (180:270:0.5);
\end{tikzpicture} exists, then the pipe of that tile must move upward and eventually take a left to produce a \begin{tikzpicture}[scale=\inlinescale]
\draw (0, 0) grid (1,1);
\draw[blue](0.5,0) arc (0:90:0.5);
\end{tikzpicture} tile within the region. Thus, the only possible elbow tile is the \begin{tikzpicture}[scale=\inlinescale]
\draw (0, 0) grid (1,1);
\draw[blue](1,0.5) arc (90:180:0.5);
\end{tikzpicture} tile, and all strands must have the ``simple'' form in which it only has a single turn. In particular, if the region is the entire QBPD, then the entire QBPD has the ``simple'' form and it is exactly the Rothe diagram.
 \end{proof}
\end{lemma}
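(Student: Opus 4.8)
The plan is to trace each pipe from its start on the right edge of the grid to its end on the bottom edge while recording its direction of travel, which (since pipes may move left, up, or down, but never right) is always one of \emph{left}, \emph{up}, or \emph{down}. The first thing I would pin down is exactly how each elbow changes this direction, using the no-rightward-motion condition to discard the impossible orientations. The elbow joining the right and bottom edges can only be traversed left-then-down; the northeast elbow (joining the top and right edges) only left-then-up; the elbow joining the left and top edges only down-then-left; and the southwest elbow only up-then-left. In each case the discarded orientation would require entering or leaving through a side edge while moving rightward, which is forbidden.

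From this bookkeeping, the key step is to show that excluding the left-top elbow and the southwest elbow from a leftmost region $R$ also excludes the northeast elbow. Suppose a northeast elbow occurred in $R$. The pipe through it exits moving upward and thereafter stays in a single column, which lies to the left of the defining column of $R$ and hence remains inside $R$. While moving up, the pipe can only continue straight up or turn, and the only turn available out of the up-direction is a southwest elbow; it cannot leave through the top of the grid because every pipe must terminate on the bottom edge. So the upward run ends at a southwest elbow in the same column, hence in $R$, contradicting the hypothesis. Therefore no northeast elbow appears in $R$.

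It then follows that no pipe ever moves upward within $R$: each pipe is born moving left at the right edge, and the up-direction can only be entered through a northeast elbow. With no upward motion there are no upward vertical or crossing tiles and no southwest elbows, so the only elbow that can turn a pipe is the right-to-bottom one, the down-to-left (left-top) elbow being excluded by hypothesis. A pipe therefore travels straight left, turns down once at a right-to-bottom elbow, and---having no available down-to-left turn---continues straight down to the bottom edge. This is precisely the simple form, and every tile carrying a pipe is one of the four allowed in a simple region; taking $R$ to be the whole grid gives the Rothe diagram.

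I expect the direction-tracking to be the only delicate part: one must check that the no-rightward-motion rule really does eliminate every alternative elbow orientation and every alternative way of leaving the up-direction, and that an upward-moving pipe is trapped in its column inside $R$ and cannot escape through the top. Once these elementary case checks are recorded, the state-transition analysis closes the proof with no computation.
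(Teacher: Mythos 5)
Your proposal is correct and follows essentially the same route as the paper's proof: the core step in both is that a northeast elbow would force an upward run trapped in a single column of the leftmost region, which must terminate in a southwest elbow inside that region (it cannot exit the top of the grid), contradicting the hypothesis. Your additional bookkeeping of admissible elbow traversal directions under the no-rightward-motion rule simply makes explicit what the paper leaves implicit, and the conclusion that only the right-to-bottom elbow survives, forcing the simple form, matches the paper exactly.
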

\begin{lemma}\label{lem:droopliftgen}
    All unpaired QBPDs can be generated from the Rothe diagram using a sequence of droop and lift moves.
    \begin{proof}
        It suffices to show that, given any QBPD, we can use a sequence of valid inverse droop and inverse lift moves to get back to the Rothe diagram. Given a QBPD that is not the Rothe diagram, we find the leftmost \begin{tikzpicture}[scale=\inlinescale]
\draw (0, 0) grid (1,1);
\draw[blue](0,0.5) arc (-90:0:0.5);
\end{tikzpicture} or \begin{tikzpicture}[scale=\inlinescale]
\draw (0, 0) grid (1,1);
\draw[blue](0.5,0) arc (0:90:0.5);
\end{tikzpicture}
 tile. If no such tile exists, then the QBPD is just the Rothe diagram by Lemma \ref{simple}. Otherwise, if there are ties, take the topmost one.
 
Now look at the region strictly to the left of that leftmost chosen tile, this region is free of \begin{tikzpicture}[scale=\inlinescale]
\draw (0, 0) grid (1,1);
\draw[blue](0,0.5) arc (-90:0:0.5);
\end{tikzpicture} and \begin{tikzpicture}[scale=\inlinescale]
\draw (0, 0) grid (1,1);
\draw[blue](0.5,0) arc (0:90:0.5);
\end{tikzpicture}
 tiles by definition, and thus is simple by Lemma \ref{simple}. Let $R$ denotes this simple region. By Lemma \ref{simple}, every strand in $R$ has the simple form. 
 %\begin{itemize}
 
     We first consider the case where the leftmost chosen tile is \begin{tikzpicture}[scale=\inlinescale]
\draw (0, 0) grid (1,1);
\draw[blue](0,0.5) arc (-90:0:0.5);
\end{tikzpicture}. Call this tile $e$. Then, tracing this pipe to the left, it eventually takes a \begin{tikzpicture}[scale=\inlinescale]
\draw (0, 0) grid (1,1);
\draw[blue](1,0.5) arc (90:180:0.5);
\end{tikzpicture} turn---call this elbow $a$---and go straight down since this is $R$. Tracing this pipe up from tile $e$, it eventually takes a \begin{tikzpicture}[scale=\inlinescale]
\draw (0, 0) grid (1,1);
\draw[blue](1,0.5) arc (90:180:0.5);
\end{tikzpicture} turn---call this elbow $b$. Consider the rectangle bounded by $a,b, e$. We claim we can undroop $e$ into the topleft corner of this rectangle. Call this topleft corner $c$. Note that every other pipe in $R$ has the simple form, so its behavior in $R$ is determined by the location of its \begin{tikzpicture}[scale=\inlinescale]
\draw (0, 0) grid (1,1);
\draw[blue](1,0.5) arc (90:180:0.5);
\end{tikzpicture} tile, which cannot be on the row of $b$ or column of $a$ (since the pipe would run into either $a$ or $b$ otherwise). This means there are no pipes moving horizontally in the $b\to c$ horizontal segment, there are no pipes moving vertically in the $c\to a$ horizontal segment, and $c$ is empty. Thus, the undroop does not conflict with other pipes. After undrooping, the changed strand is simple in $R$ and all other strands are still simple, so in $R$, they cross at most once. There could be at most one other crossing since crossings out of $R$ exist before the undroop. Then if any two pipes cross twice after the undroop, they must cross once before the undroop, which is a contradiction since the parity of the number of crossings cannot change.

Now consider the case where the leftmost chosen tile is \begin{tikzpicture}[scale=\inlinescale]
\draw (0, 0) grid (1,1);
\draw[blue](0.5,0) arc (0:90:0.5);
\end{tikzpicture}. Call this tile $e$.
In this case, trace the pipe down and find the \begin{tikzpicture}[scale=\inlinescale]
\draw (0, 0) grid (1,1);
\draw[blue](0.5,1) arc (180:270:0.5);
\end{tikzpicture} tile and call this tile $a$. Trace the pipe to the left and find the \begin{tikzpicture}[scale=\inlinescale]
\draw (0, 0) grid (1,1);
\draw[blue](1,0.5) arc (90:180:0.5);
\end{tikzpicture} turn. Call this tile $b$. After $b$, it goes straight down since this is in $R$. Make the inverse lift by unlifting the $e\to b$ horizontal segment down to the row of $a$, i.e., making the pipe at $a$ move to the left until it hits the column of $b$ instead of moving up to $e$. As in the previous case, other pipes in $R$ have the simple form and the \begin{tikzpicture}[scale=\inlinescale]
\draw (0, 0) grid (1,1);
\draw[blue](1,0.5) arc (90:180:0.5);
\end{tikzpicture} tile of another strand in $R$ cannot be on the row of $a$. So, the inverse lift does not conflict with other pipes. After inverse lifting, the strand is simple in $R$, so a similar argument to the previous case shows it is reduced.
 %\end{itemize}
 
 After each inverse droop or inverse lift above, the number of \begin{tikzpicture}[scale=\inlinescale]
\draw (0, 0) grid (1,1);
\draw[blue](0.5,0) arc (0:90:0.5);
\end{tikzpicture} or \begin{tikzpicture}[scale=\inlinescale]
\draw (0, 0) grid (1,1);
\draw[blue](0,0.5) arc (-90:0:0.5);
\end{tikzpicture} tiles decreases, so we eventually arrive at the Rothe diagram.
    \end{proof}
\end{lemma}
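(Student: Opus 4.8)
The plan is to run the construction in reverse. Every droop is undone by an undroop and every lift by an inverse lift, so it is enough to show that every unpaired QBPD $P$ other than the Rothe diagram admits a single inverse droop or inverse lift to another unpaired QBPD, and that iterating this must terminate at the Rothe diagram; reading the resulting sequence backwards then builds $P$ from the Rothe diagram by droops and lifts. To control termination I would invoke Lemma~\ref{simple}, which says that an unpaired QBPD is the Rothe diagram precisely when it contains neither of the two elbows forbidden there, namely the southwest elbow and the elbow joining the top and left edges. I would take the total number of these two elbow tiles as a monovariant (a quantity that strictly decreases under each move); being a nonnegative integer, it must reach $0$, i.e.\ the Rothe diagram.

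First I would select where to act. Among all copies of the two forbidden elbows, take the one in the leftmost column, breaking ties by choosing the topmost; call it $e$. Every column strictly to the left of $e$ then forms a leftmost region containing no forbidden elbow, so by Lemma~\ref{simple} this region $R$ is simple and, what matters most, each strand meeting $R$ has the simple form of Definition~\ref{def:simple}: straight left, one southeast turn, then straight down. The entire behaviour of such a strand inside $R$ is thus pinned down by the single row and column of its southeast turn, and it is this rigidity that makes the inverse move unambiguous.

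Next I would split on the type of $e$. If $e$ joins the top and left edges, I trace its strand leftward until its southeast turn $a$ and upward until its southeast turn $b$; then $a$, $b$, $e$ bound a rectangle and I undroop $e$ into its top-left corner $c$, as in Figure~\ref{fig:droop}. If $e$ is a southwest elbow, I trace downward to the matching northeast elbow $a$ and leftward to the southeast turn $b$, and I apply the inverse lift that straightens the detour by lowering the horizontal $e\to b$ segment to the row of $a$, as in Figure~\ref{fig:lift}. In both cases the modified strand regains the simple form in $R$, the elbow $e$ disappears, and no new southwest or top-left elbow is created, so the monovariant drops by one; since no domino is introduced, we stay among unpaired QBPDs.

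The hard part will be proving each inverse move legal, which splits into unobstructedness and reducedness. For unobstructedness I would use that every \emph{other} strand in $R$ is simple, so its unique southeast turn avoids the rows and columns swept by the move---the row of $b$ and the column of $a$ for the undroop, the row of $a$ for the inverse lift---whence no strand runs horizontally or vertically through the cells the modified strand must occupy, and in the undroop case the corner $c$ is empty. For reducedness I would note that after the move the modified strand is again simple in $R$, so inside $R$ it meets each other strand at most once, while nothing changes outside $R$; since all pipe endpoints are fixed and the move is a local homotopy, the parity of the number of crossings of any two pipes is preserved, and together with the at-most-once bound in $R$ this forbids any pair from crossing twice. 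Carrying out these two verifications in each case---especially the crossing-parity bookkeeping---is the real work; granting them, the monovariant forces termination at the Rothe diagram and the lemma follows.
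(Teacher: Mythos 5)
Your proposal is correct and follows essentially the same route as the paper's proof: the same reduction to inverse moves, the same choice of the leftmost (topmost on ties) forbidden elbow, the same appeal to Lemma~\ref{simple} to get a simple region $R$ on its left, the same case split (undroop for the elbow joining top and left edges, inverse lift for the southwest elbow), the same unobstructedness argument via the rigidity of simple strands, the same crossing-parity argument for reducedness, and the same decreasing count of forbidden elbows for termination. No meaningful differences to report.
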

\subsection{Stability}\label{stability}
Given a QBPD of $w\in S_n$, we can think of $w$ as being in $S_{n+1}$. In terms of QBPDs, we can extend an $n\times n$ QBPD $P$ to a $(n+1)\times (n+1)$ QBPD as in Figure \ref{fig:sta}. 
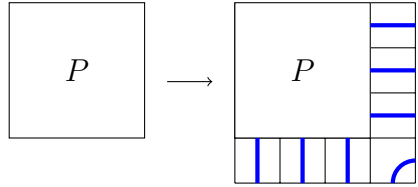
\begin{figure}[h!]
    \centering
    \begin{tikzpicture}[scale=\figscale]
        \draw (0,0) rectangle (3,3);
        \node at (1.5,1.5) {$P$};
        \draw[->] (3.5,1.25)--(4.5,1.25);
        \draw (5,-1) grid (9, 3);
        \draw[fill=white] (5,0) rectangle (8,3);
        \node at (6.5,1.5) {$P$};
        \draw[blue, ultra thick] (7.5,0)--(7.5,-1); 
        \draw[blue, ultra thick] (5.5,0)--(5.5,-1); 
        \draw[blue, ultra thick] (6.5,0)--(6.5,-1); 
        \draw[blue, ultra thick] (8,0.5)--(9,0.5); 
        \draw[blue, ultra thick] (8,1.5)--(9,1.5);  
        \draw[blue, ultra thick] (8,2.5)--(9,2.5); 
        \draw[blue, ultra thick] (8.5,-1) arc (180:90:0.5);
    \end{tikzpicture}
    \caption{Extending a $n\times n$ QBPD to a $(n+1)\times (n+1)$ QBPD}
    \label{fig:sta}
\end{figure}

The reverse is also possible:
\begin{lemma}\label{stab}
    Given $w\in S_{n+1}$ such that $w(n+1)= n+1$, for any $(n+1)\times (n+1)$ QBPD of $w$, we can restrict it to an $n\times n$ QBPD of $w$ restricted to $S_n$.
\end{lemma}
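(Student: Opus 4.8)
The goal is to show that in \emph{any} QBPD $P'$ of such a $w$, the last row and last column are forced to carry exactly the ``trivial'' structure that appears in the extension of Figure~\ref{fig:sta}: the corner cell $(n+1,n+1)$ is the elbow joining the right and bottom edges, each cell $(i,n+1)$ with $i\le n$ is a horizontal tile, and each cell $(n+1,j)$ with $j\le n$ is a vertical tile. Once this is established, deleting column $n+1$ and row $n+1$ yields the desired $n\times n$ QBPD of $w|_{[n]}$. The very first step is to pin down the pipe entering on row $n+1$: since $w(n+1)=n+1$ it must leave at the bottom of column $n+1$, and because pipes never move rightward (Definition~\ref{def:qbpd}) and no tile reverses the vertical direction of a strand, such a pipe cannot leave column $n+1$ and later return to it. Hence it turns straight down at $(n+1,n+1)$, occupying that single cell.

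For the last column I would use that every cell $(k,n+1)$ is the starting cell of pipe $k$, so it carries a strand on its right edge. I claim each pipe $i$ with $i\le n$ exits its start straight to the left. If instead it turned up or down into column $n+1$, then at every subsequent cell of that column the resident starting pipe cannot share a vertical strand, so the cell is forced to be a crossing (the only two-strand tile); in particular pipe $i$ can never turn out of column $n+1$, since a turn would require an elbow tile incompatible with the other pipe's right-edge entry. Travelling upward, pipe $i$ would then run off the top edge, which is impossible as pipes exit at the bottom; travelling downward, it would reach the bottom of column $n+1$, contradicting that this exit already belongs to pipe $n+1$. Thus column $n+1$ is horizontal tiles in rows $1,\dots,n$ together with the corner elbow.

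The last row is dual, but relies on exit data rather than start data. Since $w$ is a bijection, exactly one pipe leaves at the bottom of each column, so every cell $(n+1,j)$ must carry a strand on its bottom edge; this immediately excludes the empty, horizontal, left-top, and bottom-left tiles from the bottom row. I would then rule out any leftward motion in the bottom row: a leftward run could begin only by a downward strand turning left (a left-top elbow, which has no bottom edge and is already excluded) or by entering from the right boundary (which happens only at the corner, where the pipe instead turns straight down). With no leftward motion there are no crossings and no right-bottom elbows away from the corner, so each $(n+1,j)$ with $j\le n$ is a vertical tile.

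Finally I would delete row $n+1$ and column $n+1$ and verify validity: no domino straddles the removed boundary, because each boundary cell is a single straight or the corner elbow, and removing these trivial segments leaves pipe $i$ starting at the right edge of row $i$ and exiting at the bottom of column $w(i)$, with every crossing unchanged, so the restriction is a valid reduced QBPD of $w|_{[n]}$. I expect the main obstacle to be precisely the two ``no detour'' arguments for the last column and the ``no scooting'' argument for the last row: the real content is the bookkeeping that the no-rightward rule, the absence of vertical-reversing tiles, the single two-strand (crossing) tile, and the uniqueness of each pipe's start and exit together leave no room for nontrivial boundary behaviour.
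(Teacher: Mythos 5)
Your strategy is the same as the paper's: force the last row and column of any QBPD of $w$ to be trivial (the corner tile is the right-to-bottom elbow, the rest of the last column is horizontal tiles, the rest of the last row is vertical tiles), then delete them. You are in fact more careful than the paper in places --- the paper handles the last column via Remark~\ref{uplast} plus a collision with the corner elbow and dismisses the last row with ``similar reasoning,'' whereas you prove the corner tile is forced rather than asserting it, and you give a genuine dual argument for the bottom row based on exit data.

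There is, however, one slip in your bottom-row analysis. You claim that the requirement ``every cell $(n+1,j)$ carries a strand on its bottom edge'' immediately excludes the bottom-left (southwest) elbow. It does not: that tile joins the bottom edge to the \emph{left} edge, so it does carry a bottom strand (the criterion instead excludes the top-right elbow, which you omit from your list). This matters because your subsequent case analysis asserts that a leftward run in the bottom row can begin only with a downward strand turning left (the left-top elbow) or by entry from the right boundary; the third possibility --- an upward strand turning left, which is exactly the southwest elbow --- is never genuinely ruled out, only dismissed on the faulty grounds above. The repair is one sentence: a southwest elbow in cell $(n+1,j)$ would have to be traversed either entering from the left edge, i.e.\ moving rightward, which Definition~\ref{def:qbpd} forbids, or entering from the grid's bottom edge, which is impossible since pipes only terminate there. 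With that fix your argument is complete, and the remaining steps (no leftward motion $\Rightarrow$ no crossings or right-bottom elbows away from the corner, hence vertical tiles; no domino straddles the deleted boundary; crossings are unchanged so reducedness and the permutation are preserved) are all sound.
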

\begin{remark}\label{uplast}
    No pipes move upward in the rightmost column of any valid QBPD. This is because if a pipe moves upward in the rightmost column, eventually, it turns left and produces a \begin{tikzpicture}[scale=\inlinescale]
\draw (0, 0) grid (1,1);
\draw[blue](0.5,0) arc (0:90:0.5);
\end{tikzpicture} tile on the rightmost column, which blocks the pipe that starts on that row.
\end{remark}
\begin{proof}[Proof of Lemma \ref{stab}]
    The $(n+1, n+1)$ tile is a \begin{tikzpicture}[scale=\inlinescale]
        \draw (0, 0) grid (1,1);
        \draw[blue](1,0.5) arc (90:180:0.5);
    \end{tikzpicture} since $\sigma(n+1) = n+1$. If there is a tile in the rightmost column except for the bottom right corner that is not horizontal, let $(k, n+1)$ be the lowest such tile (largest $k$). Then, this pipe cannot move up by Remark \ref{uplast} so it must move down, but then it will run into the $(n+1, n+1)$ elbow, which is a contradiction. Thus, all tiles in the rightmost column except the bottom right corner are horizontal. Similar reasoning shows all tiles in the last row except the bottom right corner are vertical. Thus, we can reduce the QBPD to $n\times n$.
\end{proof}
\subsection{Proof of Theorem \ref{thm:qdform}}\label{proof33}
We write $\sigma t_{ab}\gtrdot\sigma$ for $\ell(\sigma t_{ab}) = \ell(\sigma)+1$ and $\sigma  t_{cd}\lhd\sigma$ for $\ell(\sigma t_{cd}) = \ell(\sigma)-\ell(t_{cd})$ to simplify notation. 
\begin{remark}\label{cond}
    Let $\sigma$ be a permutation and $a < b$. Then,
    \begin{itemize}
        \item $\sigma t_{ab}\gtrdot \sigma$ if and only if $ \sigma(a) < \sigma(b)$  and for all $a < k < b$, $\sigma(k) < \sigma(a)$ or $\sigma(k) > \sigma(b)$.
        \item $\sigma t_{ab}\lhd \sigma$ if and only if $\sigma(a) > \sigma(b)$ and for all $a < k < b$, $\sigma(a) > \sigma(k) > \sigma(b)$.
    \end{itemize}
\end{remark}
\begin{definition}
    Let $S_\infty$ denote the set of permutations of $\Z_{>0}$ that fix all but finitely many elements. 
\end{definition}
\begin{proposition}\label{prop:stabform}
    Let $\pi$ be a permutation in $S_{\infty}$ that is not the identity, and let $n$ be the largest number such that $\pi(n) \neq n$. Let $a = \pi^{-1}(n)$ and $b > a$ be such that \begin{equation}\pi(b) = \max\limits_{n\geq i > a}\pi(i),\end{equation}i.e., the number that got mapped to the largest among those after $a$, except those after $n$.
Let $\sigma = \pi t_{ab}$. Then, we have a transition equation for quantum double Schubert polynomials:
\begin{equation}
\begin{split}\label{eq:transition}
\g_\pi^q(x,y) &= (x_a-y_{\sigma(a)})\g^q_{\sigma}(x,y) +\sum_{\substack{ c < a,\\ \sigma t_{ca}\gtrdot\sigma}}\g^q_{\sigma t_{ca}}(x,y)\\&-\sum_{\substack{a < c,\\ \sigma t_{ac}\lhd\sigma}}q_{ac}\g^q_{\sigma t_{ac}}(x,y)+\sum_{\substack{c < a,\\ \sigma t_{ca}\lhd\sigma}}q_{ca}\g^q_{\sigma t_{ca}}(x,y).\end{split}
\end{equation}
Furthermore, \eqref{eq:transition}, along with the base case $\g_{\id}^q(x,y)= 1$, is enough to uniquely determine all the quantum double Schubert polynomials.
\begin{proof}
    Note that $b$ is the only number greater than $a$ (and no more than $n$) such that $\sigma(b) > \sigma(a)$. Thus $\pi = \sigma t_{ab}\gtrdot \sigma$, and so $b$ is the only number greater than $a$ such that $\sigma t_{ab}\gtrdot \sigma$. By Theorem \ref{qdmonk},
\begin{equation}\g_{s_a}^q(x,y)\g_{\sigma}^q(x,y) = \sum_{\substack{u \leq a < v,\\ \sigma t_{uv}\gtrdot\sigma}}\g_{w t_{uv}}^q(x,y)+\sum_{\substack{c \leq a < d,\\ \sigma t_{cd}\lhd\sigma}}q_{cd}\g_{w t_{cd}}^q(x,y)
+\sum_{i=1}^a(y_{w(i)}-y_i)\g_w^q(x, y)\,,\end{equation}
and \begin{equation}\g_{s_{a-1}}^q(x,y)\g_{\sigma}^q(x,y) = \sum_{\substack{u < a \leq v,\\ \sigma t_{uv}\gtrdot\sigma}}\g_{w t_{uv}}^q(x,y)+\sum_{\substack{c < a \leq d,\\ \sigma t_{cd}\lhd\sigma}}q_{cd}\g_{w t_{cd}}^q(x,y)+\sum_{i=1}^{a-1}(y_{w(i)}-y_i)\g_w^q(x, y).\end{equation}
Computing $\g_{s_a}^q(x,y)\g_w^q(x, y) - \g_{s_{a-1}}^q(x,y)\g_w^q(x, y)$ and rearranging, we get \begin{align}
(x_a-y_{\sigma(a)})\g^q_{\sigma}(x,y) &= \sum_{\substack{ a < v,\\ \sigma t_{av}\gtrdot\sigma}}\g^q_{\sigma t_{av}}(x,y)-\sum_{\substack{ u < a\\ \sigma t_{ua}\gtrdot\sigma}}\g^q_{\sigma t_{ua}}(x,y)\notag\\
&+\sum_{\substack{a < d,\\ \sigma t_{ad}\lhd\sigma}}q_{ad}\g^q_{\sigma t_{ad}}(x,y)-\sum_{\substack{c < a,\\ \sigma t_{ca}\lhd\sigma}}q_{ca}\g^q_{\sigma t_{ca}}(x,y)\,.
\end{align}
By the observation that $b$ is the only number greater than $a$ such that $\sigma t_{ab} \gtrdot \sigma$, 
we have  
\begin{align}
(x_a-y_{\sigma(a)})\g^q_{\sigma}(x,y) &= \g_{\pi}^q(x,y)-\sum_{\substack{ c < a\\ \sigma t_{ca}\gtrdot\sigma}}\g^q_{\sigma t_{ca}}(x,y)\notag\\
&+\sum_{\substack{a < c,\\ \sigma t_{ac}\lhd\sigma}}q_{ac}\g^q_{\sigma t_{ac}}(x,y)-\sum_{\substack{c < a,\\ \sigma t_{ca}\lhd\sigma}}q_{ca}\g^q_{\sigma t_{ca}}(x,y)\,,
\end{align}
and rearranging gives \eqref{eq:transition}.

For a permutation $\pi$ that is not the identity permutation, let $m(\pi)$ denote  the largest $n$ such that $\pi(n)\neq n$  and let $p(\pi)$ denote $\pi^{-1}(m(\pi))$. Then for any permutation $\tau$ that appears on the RHS, we have $m(\tau) < m(\pi)$ or $m(\tau) = m(\pi)$ and $p(\tau) > p(\pi)$ unless $\tau = \id$. Thus, \eqref{eq:transition}, along with the base case $\g_{\id}^q(x,y)= 1$, is enough to uniquely determine all quantum double Schubert polynomials.
\end{proof}
\end{proposition}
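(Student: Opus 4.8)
The plan is to obtain the transition equation \eqref{eq:transition} from the quantum Monk's rule (Theorem \ref{qdmonk}) by a telescoping difference, and then to read off which single term the key sum collapses to.

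First I would write down Monk's rule for the two products $\g^q_{s_a}\g^q_\sigma$ and $\g^q_{s_{a-1}}\g^q_\sigma$ and subtract them. On the left, since $\ell(s_k)=1$ forces $\g^q_{s_k}$ to carry no $q$-terms (in the grading where $\deg q_i = 2$), it equals the double Schubert polynomial $\g_{s_k}(x,y)=\sum_{i=1}^k(x_i-y_i)$; hence $\g^q_{s_a}-\g^q_{s_{a-1}}=x_a-y_a$ and the left side becomes $(x_a-y_a)\g^q_\sigma$. On the right, each of the three sums in Monk's rule telescopes because the two rules differ only at the threshold $k=a$ versus $k=a-1$. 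In the length-raising sum, the pairs surviving the subtraction are exactly those with the lower index pinned to $a$, namely $\sum_{a<v,\,\sigma t_{av}\gtrdot\sigma}\g^q_{\sigma t_{av}}-\sum_{u<a,\,\sigma t_{ua}\gtrdot\sigma}\g^q_{\sigma t_{ua}}$; the $q$-weighted sum reduces analogously to $\sum_{a<d,\,\sigma t_{ad}\lhd\sigma}q_{ad}\g^q_{\sigma t_{ad}}-\sum_{c<a,\,\sigma t_{ca}\lhd\sigma}q_{ca}\g^q_{\sigma t_{ca}}$; and the $y$-shift sum contributes only its $i=a$ term $(y_{\sigma(a)}-y_a)\g^q_\sigma$. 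Moving that last term across and simplifying $x_a-y_a-(y_{\sigma(a)}-y_a)=x_a-y_{\sigma(a)}$ leaves $(x_a-y_{\sigma(a)})\g^q_\sigma$ on the left.

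The combinatorial heart of the argument is then to show that $b$ is the \emph{unique} index larger than $a$ with $\sigma t_{ab}\gtrdot\sigma$, so that the sum $\sum_{a<v,\,\sigma t_{av}\gtrdot\sigma}\g^q_{\sigma t_{av}}$ is just the single polynomial $\g^q_{\sigma t_{ab}}=\g^q_\pi$. By Remark \ref{cond} this covering relation needs $\sigma(a)<\sigma(v)$ plus the in-between condition. For $a<v\le n$ the definition of $b$ as the maximizer makes $\sigma(a)=\pi(b)$ strictly larger than all other $\sigma(v)=\pi(v)$ in that range, so $v=b$ is the only candidate. The subtle case is $v>n$: here $\sigma(v)=v>\sigma(a)$ really does satisfy $\sigma(a)<\sigma(v)$, yet the intermediate position $b$ (with $a<b<v$ and $\sigma(a)<\sigma(b)=n<\sigma(v)$) violates the in-between condition, so no $v>n$ yields a cover. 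I expect this step, and specifically ruling out the $v>n$ indices, to be the main obstacle, as it is exactly where the maximality of $n$ and the maximizer choice of $b$ are used together. Substituting $\g^q_\pi$ for that sum and rearranging produces \eqref{eq:transition}.

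Finally, to see that \eqref{eq:transition} together with $\g^q_{\id}=1$ determines everything, I would attach to each non-identity $\pi$ the pair $(m(\pi),p(\pi))$, with $m(\pi)$ the largest non-fixed point and $p(\pi)=\pi^{-1}(m(\pi))\le m(\pi)$, and order permutations by decreasing $m$ and then increasing $p$. It then suffices to check that every $\tau\neq\id$ on the right of \eqref{eq:transition} is strictly smaller in this order. For $\sigma=\pi t_{ab}$ the value $m(\pi)=n$ moves to position $b>a$, so either $n$ becomes fixed and $m(\sigma)<n$, or $m(\sigma)=n$ with $p(\sigma)=b>a=p(\pi)$; the remaining transpositions $t_{ca}$ (with $c<a$) and $t_{ac}$ (with $a<c\le n$, forced by $\sigma t_{ac}\lhd\sigma$) leave position $b$ untouched, so the same conclusion holds for them. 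Because each value of $m$ admits only finitely many larger $p$ and $m$ is bounded below, this order is well-founded, so the recursion terminates and uniquely determines all $\g^q_w$.
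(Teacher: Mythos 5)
Your proposal is correct and follows essentially the same route as the paper: apply quantum Monk's rule at $k=a$ and $k=a-1$, subtract so the three sums telescope and the left side becomes $(x_a-y_{\sigma(a)})\g^q_\sigma$, collapse the length-raising sum to the single term $\g^q_\pi$ via the uniqueness of $b$, and then establish uniqueness of the recursion by the well-founded order on pairs $(m(\pi),p(\pi))$. Your write-up is in fact slightly more explicit than the paper's at two points it leaves implicit --- the degree argument showing $\g^q_{s_k}=\sum_{i=1}^k(x_i-y_i)$, and the exclusion of indices $v>n$ from the covering sum using $b$ as the witness violating the in-between condition.
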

\begin{remark}
    The usual way to obtain a transition equation from Monk's rule is by taking $a$ to be the last descent of $\pi$. Here, we take $a = \pi^{-1}(n)$ instead. This choice is deliberate: our proof of Theorem \ref{thm:qdform} works with this version of transition equation and not the usual version.
\end{remark}

We give a bijective proof to show that polynomials generated by QBPDs with binomial weights satisfying \eqref{eq:transition}. There will be $4$ bijections, each matching some subset of terms on the LHS of \eqref{eq:transition} with one of the four terms on the RHS of \eqref{eq:transition}, except for portions of the last term \begin{equation}\sum\limits_{\substack{c < a,\\ \sigma t_{ca}\lhd\sigma}}q_{ca}\g^q_{\sigma t_{ca}}(x,y)\end{equation} that will remain, which will be shown to cancel out using another series of bijections. The first two bijections (to be called $\phi_A$ and $\phi_B$) are essentially the bijection in \cite{WEIGANDT2021105470}, which could also be viewed as a special case of the algorithm given in \cite{huang}. 

Fix $\pi\in S_\infty$ and corresponding $n, \sigma, a, b$ as above. Note that we can think of $\pi$ as a permutation in $S_n$. By Lemma \ref{stab}, the polynomial generated by the QBPDs of $\pi$ does not depend on if we think of $\pi$ as being in $S_n$ or in $S_N$ for any $N\ge n$. Furthermore, for any other permutation  $\tau$ appearing in \eqref{eq:transition}, $m(\tau)\le n$, and thus by Lemma \ref{stab} we can think of $\tau$ as a permutation $S_n$ without changing  the polynomial generated by the QBPDs of $\tau$. Let $m = \pi(b) = \sigma(a)$. For reasons that will become clear later, it is helpful to define $S := \{c < a : \sigma t_{ca} \lhd \sigma\}$. Order the elements of $S$ so that $S = \{a_1 > a_2 > \dots > a_k\}$. For convenience let $a_0:= a$. Let $p_i := \sigma(a_i)$, in particular $p_0 = \sigma(a) = m$. In summary, we have the following setup:
\begin{setup}\label{su}
 $\pi\in S_\infty$, $n$ is the largest number such that $\pi(n)\ne n$, and thus $\pi\in S_n$. Set $a = \pi^{-1}(n) < n$ so $\pi(a) = n$, and \begin{equation}\pi(b) = \max_{a < i \leq n}\pi(i).\end{equation}
 Set $\sigma = \pi t_{ab}\in S_n$, and $m = \pi(b) = \sigma(a)$. Note that $b$ is the only number greater than $a$ such that $\sigma(b) > m$. As above, we have \begin{equation}S = \{c < a : \sigma t_{ca} \lhd \sigma\} = \{a_1 > a_2 > \dots > a_k\},\end{equation}
 and $a_0 = a$, $p_i = \sigma(a_i)$ for $0\leq i \leq k$, in particular, $p_0 = \sigma(a) = m$.
\end{setup}
Recall the matrix coordinate notation: $(i, j)$ means the tile on row $i$ column $j$ and $[a, b]\times [c, d]$ represents the rectangle consisting of row $a$ to $b$ and column $c$ to $d$.
\begin{proposition}\label{prop:configpi}
    In Setup \ref{su}, any QBPD for $\pi$ has one of the $4$ configurations in Figure \ref{fig:configpi} when restricted to $[a, n]\times[m, n]$. 
    \begin{figure}[h!]
        \centering
        \begin{tikzpicture}[scale=\figscale]
            \node at (0.5, -0.3) {$m$};
            \node at (6.5, -0.3) {$n$};
            \node at (7.3, 5.5) {$a$};
            \node at (7.3, 1.5) {$b$};
            \node at (3.5, -0.5) {$A$};
            \draw (0, 0) grid (7,6);
            \foreach \i in {1,2,3}
                {\draw[green] (\i.5, 0)--(\i.5, 6);
                \draw[green] (0, 1+\i.5)--(7,1+\i.5);
                } 
            \draw[green] (0, 0.5)--(7, 0.5);   
            \draw[green] (5.5, 0)--(5.5, 6);
            \draw[green] (4.5, 0)--(4.5, 6);
            \draw[blue, ultra thick] (1, 1.5)--(7, 1.5);
            \draw [blue, ultra thick](1,1.5) arc (90:180:0.5);
            \draw[blue, ultra thick] (0.5, 0)--(0.5, 1);
            \draw[blue, ultra thick] (6.5, 0)--(6.5, 5);
            \draw[blue, ultra thick] (6.5, 5) arc (180:90:0.5);
        \end{tikzpicture}
        \begin{tikzpicture}[scale=\figscale]
            \node at (0.5, -0.3) {$m$};
            \node at (6.5, -0.3) {$n$};
            \node at (7.3, 5.5) {$a$};
            \node at (7.3, 1.5) {$b$};
            \node at (3.5, -0.5) {$B$};
            \draw[blue, ultra thick] (0.5, 6) arc (0:-90:0.5);
            \draw (0, 0) grid (7,6);
            \foreach \i in {1,2,3}
                {\draw[green] (\i.5, 0)--(\i.5, 6);
                \draw[green] (0, 1+\i.5)--(7,1+\i.5);
                } 
            \draw[green] (0, 0.5)--(7, 0.5);
            \draw[green] (5.5, 0)--(5.5, 6);
            \draw[green] (4.5, 0)--(4.5, 6);
            \draw[blue, ultra thick] (1, 1.5)--(7, 1.5);
            \draw [blue, ultra thick](1,1.5) arc (90:180:0.5);
            \draw[blue, ultra thick] (0.5, 0)--(0.5, 1);
            \draw[blue, ultra thick] (6.5, 0)--(6.5, 5);
            \draw[blue, ultra thick] (6.5, 5) arc (180:90:0.5);
        \end{tikzpicture}

        \begin{tikzpicture}[scale=\figscale]
            \node at (0.5, -0.3) {$m$};
            \node at (6.5, -0.3) {$n$};
            \node at (7.3, 5.5) {$a$};
            \node at (7.3, 1.5) {$b$};
            \node at (7.3, 3.5) {$c$};
            \node at (3.5, -0.5) {$C$};
            
            \draw (0, 0) grid (7,6);
            \draw[white] (1,7) --(2,7);
            \foreach \i in {1,2,3}
                {\draw[green] (\i.5, 0)--(\i.5, 6);
                \draw[green] (0, 1+\i.5)--(7,1+\i.5);
                } 
            \draw[green] (0, 0.5)--(7, 0.5);  
            \draw[green] (5.5, 0)--(5.5, 6);
            \draw[green] (4.5, 0)--(4.5, 6);
            \draw [blue, ultra thick] (0.5,4)--(0.5,5);
            \draw [blue, ultra thick] (1,3.5)--(7,3.5);
            \draw[blue, ultra thick] (1, 1.5)--(7, 1.5);
            \draw[fill=white] (0,3) rectangle (1,4);
            \draw[blue] (0.5, 5) arc (0:90:0.5);
            \draw[blue] (0.5, 5)--(0.5,6);
            \draw[blue, ultra thick] (0.5, 4) arc (180:270:0.5);
            \draw [blue, ultra thick](1,1.5) arc (90:180:0.5);
            \draw[blue, ultra thick] (0.5, 0)--(0.5, 1);
            \draw[blue, ultra thick] (6.5, 0)--(6.5, 5);
            \draw[blue, ultra thick] (6.5, 5) arc (180:90:0.5);
        \end{tikzpicture}
        \begin{tikzpicture}[scale=\figscale]
            \node at (0.5, -0.3) {$m$};
            \node at (6.5, -0.3) {$n$};
            \node at (7.3, 5.5) {$a$};
            \node at (7.3, 1.5) {$b$};
            \node at (3.5, -0.5) {$D$};
            \draw (0, 0) grid (7,6);
            \draw[fill=white] (0,5) rectangle (1,7);
            \foreach \i in {1,2,3}
                {\draw[green] (\i.5, 0)--(\i.5, 6);
                \draw[green] (0, 1+\i.5)--(7,1+\i.5);
                } 
            \draw[green] (0, 0.5)--(7, 0.5);  
            \draw[green] (5.5, 0)--(5.5, 6);
            \draw[green] (4.5, 0)--(4.5, 6);
            \draw[blue, ultra thick] (1, 1.5)--(7, 1.5);
            \draw [blue, ultra thick](1,1.5) arc (90:180:0.5);
            \draw[blue, ultra thick] (0.5, 0)--(0.5, 1);
            \draw[blue, ultra thick] (6.5, 0)--(6.5, 5);
            \draw[blue, ultra thick] (6.5, 5) arc (180:90:0.5);
        \end{tikzpicture}
        \caption{Configurations for QBPDs of $\pi$. The number of green pipes may differ, but they must go straight as in the figure. Thin blue (in C) indicates possibilities. The numbers written on the left and at the bottom of each diagram are row numbers and column numbers.
        }
        \label{fig:configpi}
    \end{figure}
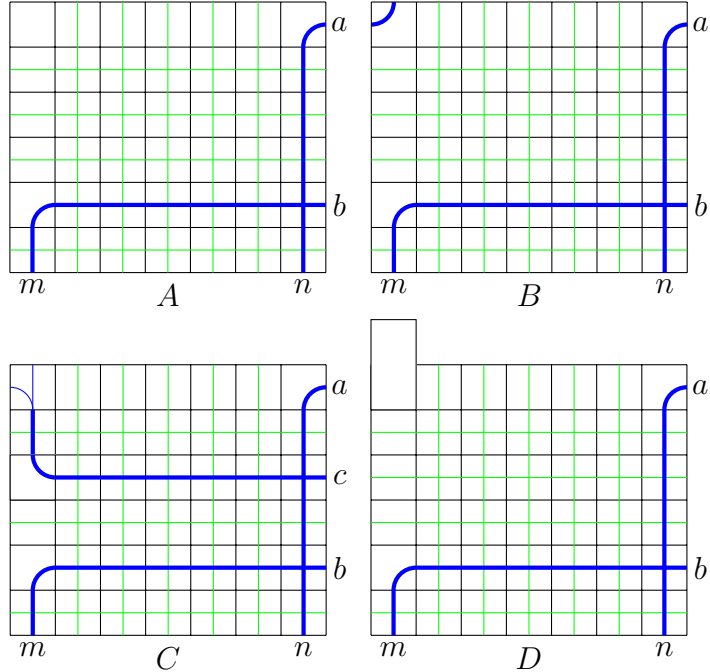
\begin{proof}
    The Rothe diagram has configuration $A$, where the green pipes are straight since $\pi(a)= n$ and $m = \pi(b) > \pi(d)$ for all $d > a, d\neq b$. Since all unpaired QBPDs can be generated from the Rothe diagram by a sequence of lift moves and droop moves, we only need to analyze what lift moves and droop moves can do to the $[a,n]\times [m, n]$ region. Droop moves need an empty tile to droop onto, so if it affects this region, then it must droop onto the empty tile at $(a,m)$, which gives configuration $B$. Lift moves need to turn a \begin{tikzpicture}[scale=\inlinescale]
\draw (0, 0) grid (1,1);
\draw[blue](0,0.5)--(1,0.5);
\end{tikzpicture} tile or a \begin{tikzpicture}[scale=\inlinescale]
\draw (0, 0) grid (1,1);
\draw[blue](0,0.5) arc (90:0:0.5);
\end{tikzpicture} tile upward, so from configuration $A$ only the \begin{tikzpicture}[scale=\inlinescale]
\draw (0, 0) grid (1,1);
\draw[blue](0,0.5)--(1,0.5);
\end{tikzpicture} tile on column $m$ and row strictly between $a$ and $b$ can be turned up. Thus, a droop move that changes the configuration will turn a tile at $(c, m)$ up, for some $a < c < b$, which gives configuration $C$. Once we reach configuration $B$, droop moves cannot change this region since there are no more empty tiles. Lift moves cannot change the region either, since the elbow at $(a, m)$ blocks all lift moves that turn any \begin{tikzpicture}[scale=\inlinescale]
\draw (0, 0) grid (1,1);
\draw[blue](0,0.5)--(1,0.5);
\end{tikzpicture} tiles directly below it upward. Similarly, once we reach configuration $C$, no more droop moves or lift moves can change the configuration (except for possibly a lift move that turns the \begin{tikzpicture}[scale=\inlinescale]
    \draw (0, 0) grid (1,1);
    \draw[blue](0,0.5) arc (90:0:0.5);
    \end{tikzpicture} tile at $(a, m)$ into a \begin{tikzpicture}[scale=\inlinescale]\draw (0, 0) grid (1,1);
    \draw[blue](0.5,0)--(0.5,1);
    \end{tikzpicture} tile.) Finally, since droop and lift moves only generate unpaired QBPDs, a fourth possible configuration is when we have configuration $A$, but the empty tile on $(a,m)$ gets paired into a domino, which is configuration $D$.
\end{proof}
\end{proposition}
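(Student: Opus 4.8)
The plan is to build directly on Lemma~\ref{lem:droopliftgen}, which says that every unpaired QBPD of $\pi$ is produced from the Rothe diagram by a sequence of droop and lift moves, together with the fact (recorded in Section~\ref{droopandlift}) that every QBPD is obtained from an unpaired one by pairing empty boxes into dominoes. Writing $R := [a,n]\times[m,n]$ for the rectangle in question, it therefore suffices to (i) identify the restriction of the Rothe diagram to $R$, (ii) determine exactly how droop and lift moves can alter the tiling inside $R$, and (iii) account for the single domino that can appear in $R$. Everything is local to $R$, so the essential input is the special combinatorics near rows $a$ and $b$ recorded in Setup~\ref{su}.

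First I would check that the Rothe diagram restricted to $R$ is configuration $A$. Since $\pi(a)=n$, the pipe entering on row $a$ turns down at the top-right and descends column $n$; since $m=\pi(b)$ is strictly larger than $\pi(d)$ for every $d$ with $a<d\le n$ and $d\ne b$, the pipe entering on row $b$ runs straight left to column $m$ and turns down there. Every other pipe entering on a row in $(a,n]$ then crosses $R$ horizontally and exits to the left of column $m$, while the pipes destined for columns strictly between $m$ and $n$ (necessarily coming from rows above $a$) cross $R$ as straight vertical strands. This is exactly configuration $A$, and crucially it shows that the \emph{only} empty tile of $R$ is the top-left corner $(a,m)$.

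Next I would track the moves, using as the key structural facts that $(a,m)$ is the unique empty tile of $R$ and that $b$ is the unique index greater than $a$ with $\sigma(b)>m$. A droop requires an empty target tile, so any droop touching $R$ must droop onto $(a,m)$, and I would verify this yields exactly configuration $B$. A lift raises a horizontal segment of a strand, so from configuration $A$ the only liftable segments inside $R$ are the horizontal tiles on column $m$ at a row strictly between $a$ and $b$; lifting one produces configuration $C$. I would then prove \emph{closure}: once in $B$ the new elbow at $(a,m)$ occupies the only empty tile and blocks every lift below it, so $R$ can no longer change; once in $C$ the strand at $(a,m)$ likewise obstructs all further droops and lifts, apart from the innocuous conversion of the $(a,m)$ elbow into a vertical tile, which stays within configuration $C$. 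Finally, configuration $D$ arises by pairing the empty tile $(a,m)$ of configuration $A$ into a domino.

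The main obstacle will be this closure argument: ruling out that a droop or lift whose active tiles lie outside $R$ could propagate inward and create a fifth local pattern. I would handle it by a boundary analysis of $R$. Remark~\ref{uplast} forbids upward motion in the rightmost column and thereby pins the column-$n$ strand in place, while the simple-form description of Lemma~\ref{simple} controls the strands just to the left of column $m$; combined with the uniqueness of the empty tile $(a,m)$, these force any move reaching $R$ to be one of the droop or lifts already enumerated. The most delicate point is confirming that once configuration $B$ or $C$ has been attained no later move can re-empty a tile of $R$, and here I would argue that the elbow now sitting at $(a,m)$ seals off the only channels through which a droop (which needs an empty target) or a lift (which needs a liftable horizontal segment) could re-enter $R$.
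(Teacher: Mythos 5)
Your proposal is correct and takes essentially the same approach as the paper's own proof: the Rothe diagram restricts to configuration $A$ with $(a,m)$ as the unique empty tile of $[a,n]\times[m,n]$, any droop affecting the region must target $(a,m)$ (giving $B$), any lift must turn up a horizontal tile $(c,m)$ with $a<c<b$ (giving $C$), configurations $B$ and $C$ are closed under further moves, and $D$ arises by pairing $(a,m)$ into a domino. The only blemish is your appeal to Lemma~\ref{simple} and Remark~\ref{uplast} in the closure step---Lemma~\ref{simple} concerns leftmost regions free of quantum elbows and does not literally apply here---but this is harmless, since the justification you also give (a droop needs an empty target tile, a lift needs a liftable right endpoint, and these cells necessarily lie inside $[a,n]\times[m,n]$ whenever the move changes that region) is exactly the paper's argument.
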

\begin{proposition}\label{prop:configsig}
    In Setup \ref{su}, any QBPD for $\sigma$ has the configuration of Figure \ref{fig:cfgsig} when restricted to $[a,n]\times [m, n]$:
    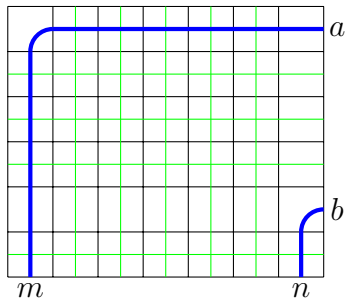
\begin{figure}[h!]
        \centering
        \begin{tikzpicture}[scale=\figscale]
            \node at (0.5, -0.3) {$m$};
            \node at (6.5, -0.3) {$n$};
            \node at (7.3, 5.5) {$a$};
            \node at (7.3, 1.5) {$b$};
            \draw (0, 0) grid (7,6);
            \foreach \i in {1,2,3}
                {\draw[green] (\i.5, 0)--(\i.5, 6);
                \draw[green] (0, 1+\i.5)--(7,1+\i.5);
                } 
            \draw[green] (0, 0.5)--(7, 0.5);   
            \draw[green] (5.5, 0)--(5.5, 6);
            \draw[green] (4.5, 0)--(4.5, 6);
            \draw[blue, ultra thick] (1, 5.5)--(7, 5.5);
            \draw [blue, ultra thick](1,5.5) arc (90:180:0.5);
            \draw[blue, ultra thick] (0.5, 0)--(0.5, 5);
            \draw[blue, ultra thick] (6.5, 0)--(6.5, 1);
            \draw[blue, ultra thick] (6.5, 1) arc (180:90:0.5);
        \end{tikzpicture}
        \caption{Configuration of QBPDs of $\sigma$}
        \label{fig:cfgsig}
    \end{figure}
    \begin{proof}
        This is the configuration for the Rothe diagram of $\sigma$, by a similar argument as in the proof of Proposition \ref{prop:configpi}. There are no empty tiles, so droop moves cannot affect this region. Lift moves need to turn a \begin{tikzpicture}[scale=\inlinescale]
\draw (0, 0) grid (1,1);
\draw[blue](0,0.5)--(1,0.5);
\end{tikzpicture} tile or a \begin{tikzpicture}[scale=\inlinescale]
\draw (0, 0) grid (1,1);
\draw[blue](0,0.5) arc (90:0:0.5);
\end{tikzpicture} tile upward, but all such tiles in this region lie in the rightmost column, so they cannot move upward by Remark \ref{uplast}. Thus, lift moves cannot change this region.
    \end{proof}
\end{proposition}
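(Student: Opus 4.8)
The plan is to mirror the proof of Proposition~\ref{prop:configpi}: I would first check that the Rothe diagram of $\sigma$ already exhibits the configuration of Figure~\ref{fig:cfgsig} on $[a,n]\times[m,n]$, and then show that this configuration is invariant under every droop and lift move, so that Lemma~\ref{lem:droopliftgen} propagates it to all QBPDs of $\sigma$.

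For the base case I would record the values of $\sigma$ on the rows meeting the region. Since $\sigma = \pi t_{ab}$, we have $\sigma(a) = \pi(b) = m$, $\sigma(b) = \pi(a) = n$, and $\sigma(i) = \pi(i)$ otherwise. As $m = \pi(b) = \max_{a < i \le n}\pi(i)$ and $\pi$ is injective, every $i$ with $a < i \le n$ and $i \ne b$ satisfies $\sigma(i) = \pi(i) < m$; dually, each column $c$ with $m < c < n$ has $\sigma^{-1}(c) < a$, since none of $a$, $b$, the rows $a < i \le n$, or the fixed rows above $n$ can map to such a $c$. In the simple form describing the Rothe diagram (Lemma~\ref{simple}), the pipe of row $i$ turns from horizontal to vertical precisely at column $\sigma(i)$. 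Hence inside $[a,n]\times[m,n]$ the only turns are the elbows at $(a,m)$ and $(b,n)$, the pipes of rows $a < i \le n$ with $i \ne b$ pass straight through horizontally (their turns lie in columns $< m$), and the pipe occupying each interior column $m < c < n$ enters from above and runs straight down. This is exactly Figure~\ref{fig:cfgsig}; in particular every cell of the region is filled by a pipe tile, so the region has no empty cell.

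Next I would show the configuration survives all moves. By Lemma~\ref{lem:droopliftgen} every unpaired QBPD of $\sigma$ is obtained from the Rothe diagram by droops and lifts, and since the region is completely filled there is no empty cell available to be paired, so no domino appears in the region and it suffices to treat the unpaired case. A droop move places its new elbow on an empty cell; as the region has none, no droop can change it. A lift move reroutes a horizontal strand upward, so performing one inside the region would force some strand to move up. For every column $c$ with $m \le c \le n-1$ a single downward strand already occupies every row of the region, leaving no room for an upward detour there; the only horizontal tiles for which a lift is not immediately blocked are the right-edge entries in column $n$ (in the rows $a < i < b$), and turning any of these upward creates upward motion in the rightmost column. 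Remark~\ref{uplast} forbids this, so no lift can change the region either, which gives the claim.

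I expect the base case and the droop step to be routine; the delicate point is the lift analysis. The cleanest way I see to make it rigorous is the column-by-column packing check sketched above: verifying that each column $c$ with $m \le c \le n-1$ carries a downward strand through every row of the region, so that no lift detour can fit, reduces every remaining possibility to a lift that turns a right-edge tile upward in column $n$, where Remark~\ref{uplast} applies, exactly as in Proposition~\ref{prop:configpi}.
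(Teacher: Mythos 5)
Your proposal is correct and follows the paper's own strategy: verify the configuration on the Rothe diagram of $\sigma$, then use Lemma~\ref{lem:droopliftgen} and show droops are blocked because the region has no empty cells, while lifts are blocked because any lift touching the region would force upward motion there, which is impossible in columns $m,\dots,n-1$ (each already carries a downward strand in every row) and forbidden in column $n$ by Remark~\ref{uplast}. The differences are only in presentation: you make explicit the base-case values of $\sigma$, the domino/pairing step, and a column-packing formulation of the lift obstruction, where the paper instead observes that all horizontal and southwest-elbow tiles of the region lie in the rightmost column --- equivalent observations.
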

\begin{proposition}\label{prop:sigcagtr}
    In Setup \ref{su}, for $c < a$ such that $\sigma t_{ca} \gtrdot \sigma$, all QBPDs of $\sigma t_{ca}$ have the configuration in Figure \ref{fig:cfgsigca} when restricted to $[a, n]\times [m,n]$.
    \begin{figure}[h!]
        \centering
        \begin{tikzpicture}[scale=\figscale]
            \node at (0.5, -0.3) {$m$};
            \node at (6.5, -0.3) {$n$};
            \node at (7.3, 5.5) {$a$};
            \node at (7.3, 1.5) {$b$};
            \draw (0, 0) grid (7,6);
            \foreach \i in {1,2,3}
                {\draw[green] (\i.5, 0)--(\i.5, 6);
                \draw[green] (0, 1+\i.5)--(7,1+\i.5);
                } 
            \draw[green] (0, 0.5)--(7, 0.5);   
            \draw[green] (5.5, 0)--(5.5, 6);
            \draw[green] (4.5, 0)--(4.5, 6);
            \draw[blue, ultra thick] (0, 5.5)--(7, 5.5);
            \draw[blue, ultra thick] (0.5, 0)--(0.5, 6);
            \draw[blue, ultra thick] (6.5, 0)--(6.5, 1);
            \draw[blue, ultra thick] (6.5, 1) arc (180:90:0.5);
        \end{tikzpicture}
        \caption{Configuration of QBPDs of $\sigma t_{ca}\gtrdot \sigma$}
        \label{fig:cfgsigca}
    \end{figure}
    \begin{proof}
        Since $\sigma t_{ca} \gtrdot\sigma$ we have $\sigma(c) < \sigma(a) = m$, so this is the configuration of the Rothe diagram of $\sigma t_{ca}$, where the pipe starting on row $a$ goes to column $\sigma(c) < m$ and the pipe starting on row $c < a$ goes to column $m$. There are no empty tiles and all 
        \begin{tikzpicture}[scale=\inlinescale]
\draw (0, 0) grid (1,1);
\draw[blue](0,0.5)--(1,0.5);
\end{tikzpicture} tiles or  \begin{tikzpicture}[scale=\inlinescale]
\draw (0, 0) grid (1,1);
\draw[blue](0,0.5) arc (90:0:0.5);
\end{tikzpicture} tiles are in the rightmost column, so by the same reasoning as in Proposition \ref{prop:configsig}, droop and lift moves cannot change this configuration. Thus, all QBPDs of $\sigma t_{ca}$ have this configuration.
    \end{proof}
\end{proposition}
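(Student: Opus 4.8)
The plan is to follow the template of Propositions~\ref{prop:configsig} and~\ref{prop:configpi}: determine the restriction to $[a,n]\times[m,n]$ of the Rothe diagram of $\sigma t_{ca}$, show that this restriction is frozen under the droop and lift moves of Lemma~\ref{lem:droopliftgen}, and finally rule out dominoes in the region so that the conclusion covers every QBPD rather than only the unpaired ones. The combinatorial input that starts everything is the inequality $\sigma(c) < m$: since $c < a$ and $\sigma t_{ca} \gtrdot \sigma$, Remark~\ref{cond} gives $\sigma(c) < \sigma(a) = m$. Consequently, in $\sigma t_{ca}$ the strand of row $a$ is routed to column $\sigma(c) < m$, so it runs horizontally across the entire top row of the region and leaves on the left, whereas the strand ending in column $m$ now originates in row $c < a$ and therefore enters the region from above and descends straight down column $m$. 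Together with the Setup data that $\sigma(b) = n$ and that $b$ is the \emph{unique} index greater than $a$ with $\sigma(b) > m$, this yields precisely Figure~\ref{fig:cfgsigca}, the remaining strands (the green ones, whose destinations lie strictly between $m$ and $n$ and which are fed from rows above $a$) passing straight through.

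I would then check the two invariance claims separately. A droop requires an empty tile to land on, so it is enough to show the region contains no blank tile, i.e.\ no Rothe box of $\sigma t_{ca}$ lies in $[a,n]\times[m,n]$. For $i = a$ and for $a < i \le n$ with $i \ne b$ one has $(\sigma t_{ca})(i) < m \le j$, so $(i,j)$ is never a box; for $i = b$ one has $(\sigma t_{ca})(b) = n$, and for $j < n$ the column $j \in [m, n-1]$ satisfies $(\sigma t_{ca})^{-1}(j) < a < b$, so again no box occurs. A lift, on the other hand, must turn a pure horizontal tile or a southwest-elbow tile upward; a Rothe diagram has no southwest elbows, and the only pure horizontal tiles in the region are the strand-entry cells in the rightmost column $n$ (every horizontal strand crosses a descending strand at each column $m, m+1, \dots, n-1$). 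Turning such a tile upward would create upward motion in the rightmost column, which Remark~\ref{uplast} forbids. Hence neither a droop nor a lift can alter the region, and by Lemma~\ref{lem:droopliftgen} every unpaired QBPD of $\sigma t_{ca}$ restricts to Figure~\ref{fig:cfgsigca}.

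The only remaining freedom in building a QBPD is the pairing of empty boxes into dominoes; since the region has no empty tile, no domino can sit inside it, so the configuration of Figure~\ref{fig:cfgsigca} holds for \emph{all} QBPDs of $\sigma t_{ca}$. I expect the main obstacle to be the box-freeness verification in the droop step: one must argue cleanly that every column strictly between $m$ and $n$ receives its strand from a row above $a$, so that these columns are filled by straight vertical strands crossing the horizontal ones, and that row $b$ contributes no box even though $(\sigma t_{ca})(b) = n$ exceeds every $j < n$. Both facts are forced by $\sigma(c) < m$ and by the uniqueness of $b$, exactly as in Proposition~\ref{prop:configsig}; once these inputs are recorded, the rest is routine bookkeeping.
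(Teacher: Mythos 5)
Your proposal is correct and follows essentially the same route as the paper's proof: identify the restriction of the Rothe diagram of $\sigma t_{ca}$ using $\sigma(c)<\sigma(a)=m$, then argue that the absence of empty tiles blocks droop moves and that all \begin{tikzpicture}[scale=\inlinescale]\draw (0, 0) grid (1,1);\draw[blue](0,0.5)--(1,0.5);\end{tikzpicture} and southwest-elbow tiles sit in column $n$ so Remark~\ref{uplast} blocks lift moves, exactly as the paper does by citing Proposition~\ref{prop:configsig}. Your added bookkeeping (the explicit box-freeness check row by row, and the remark that no domino can enter a region with no empty tiles) only spells out details the paper leaves implicit.
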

\begin{proposition}
    In Setup \ref{su}, for $c > a$ such that $\sigma t_{ac} \lhd \sigma$, all QBPDs of $\sigma t_{ac}$ have the configuration as in Figure \ref{fig:sigac} when restricted to $[a, n]\times [m,  n]$.
    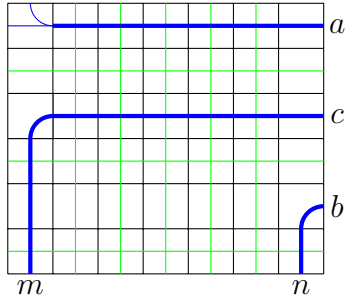
\begin{figure}[h!]
        \centering
        \begin{tikzpicture}[scale=\figscale]
            \node at (0.5, -0.3) {$m$};
            \node at (6.5, -0.3) {$n$};
            \node at (7.3, 5.5) {$a$};
            \node at (7.3, 1.5) {$b$};
            \node at (7.3, 3.5) {$c$};
            
            \draw (0, 0) grid (7,6);
            \draw[white] (1,7) --(2,7);
            \foreach \i in {1,2,3}
                {\draw[green] (\i.5, 0)--(\i.5, 6);
                \draw[green] (0, 1+\i.5)--(7,1+\i.5);
                } 
            \draw[green] (0, 0.5)--(7, 0.5);  
            \draw[green] (5.5, 0)--(5.5, 6);
            \draw[green] (4.5, 0)--(4.5, 6);
            \draw [blue, ultra thick] (1,3.5)--(7,3.5);
            \draw[blue, ultra thick] (1, 5.5)--(7, 5.5);
            \draw[fill=white] (0,3) rectangle (1,4);
            \draw[blue] (0.5, 6) arc (180:270:0.5);
            \draw[blue] (0, 5.5)--(1,5.5);
            \draw[blue, ultra thick] (0.5, 3) arc (180:90:0.5);
            \draw[blue, ultra thick] (0.5, 0)--(0.5, 3);
            \draw[blue, ultra thick] (6.5, 0)--(6.5, 1);
            \draw[blue, ultra thick] (6.5, 1) arc (180:90:0.5);
        \end{tikzpicture}
        \caption{Configuration for QBPDs of $\sigma t_{ac} \lhd \sigma$}
        \label{fig:sigac}
    \end{figure}
    \begin{proof}
        Since $\sigma t_{ac} \lhd \sigma$, we have $m  =\sigma(a) >\sigma(t) > \sigma(c)$ for all $a < t < c$ by Remark \ref{cond}, but $\sigma(b) > \sigma(a)$, so $c < b$. 
        Thus, the Rothe diagram of $\tau:= \sigma t_{ac}$ has the configuration as in Figure \ref{fig:sigac}, with the tile at $(a,m)$ being a \begin{tikzpicture}[scale=\inlinescale]
\draw (0, 0) grid (1,1);
\draw[blue](0,0.5)--(1,0.5);
\end{tikzpicture} tile. Droop moves cannot change this region since there are no empty tiles. For lift moves, the only  \begin{tikzpicture}[scale=\inlinescale]
\draw (0, 0) grid (1,1);
\draw[blue](0,0.5)--(1,0.5);
\end{tikzpicture} tiles or  \begin{tikzpicture}[scale=\inlinescale]
\draw (0, 0) grid (1,1);
\draw[blue](0,0.5) arc (90:0:0.5);
\end{tikzpicture} tiles not on the rightmost column are on column $m$. Now, for $a < y < c$ since $\tau(a) < \tau(t)$, the pipe at $(t, m)$ cannot be turned upward using a lift move since it would intersect with the pipe starting at row $a$. Thus, the only droop move that can change the region is to turn the tile at $(a,m)$ upward, which changes it into a \begin{tikzpicture}[scale=\inlinescale]
\draw (0, 0) grid (1,1);
\draw[blue](0.5,1) arc (180:270:0.5);
\end{tikzpicture} tile. After this, no more droop or lift moves can change the region.
    \end{proof}
\end{proposition}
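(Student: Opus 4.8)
The plan is to follow the template already used in Propositions~\ref{prop:configsig} and~\ref{prop:sigcagtr}: first pin down the Rothe diagram of $\tau := \sigma t_{ac}$ inside $[a,n]\times[m,n]$, and then show that the droop and lift moves of Lemma~\ref{lem:droopliftgen} cannot disturb this region, save for one harmless lift. First I would unpack the hypothesis through Remark~\ref{cond}: since $\sigma t_{ac}\lhd\sigma$, we get $m=\sigma(a)>\sigma(t)>\sigma(c)$ for every $a<t<c$. As Setup~\ref{su} gives $\sigma(b)=\pi(a)=n>m$, the index $b$ cannot lie in $(a,c)$, and $\sigma(c)<m<n$ forces $c\neq b$, so $c<b$. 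Translating to the one-line data of $\tau$, we have $\tau(a)=\sigma(c)<m$, $\tau(c)=m$, $\tau(b)=\sigma(b)=n$, and $\tau(t)=\sigma(t)\in(\sigma(c),m)$ for $a<t<c$, while every other row $t>a$ with $t\neq b$ satisfies $\tau(t)=\sigma(t)<m$. Reading these off the Rothe diagram produces exactly Figure~\ref{fig:sigac}: inside the region the row-$a$ pipe runs straight left (so the tile at $(a,m)$ is horizontal), the row-$c$ pipe turns down at $(c,m)$ and descends column $m$, the row-$b$ pipe turns at $(b,n)$, and all remaining pipes run straight left as the green pipes.

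Next I would rule out the moves. A droop move needs an empty tile to droop onto, and the Rothe diagram of $\tau$ has none in this region, so no droop acts here. A lift move must send a horizontal or southwest-elbow tile upward; by Remark~\ref{uplast} nothing in the rightmost column can move up, so the only candidate tiles are the horizontal ones sitting in column $m$. For a row $t$ with $a<t<c$, I would argue the tile $(t,m)$ cannot be lifted: because $a<t$ and $\tau(a)=\sigma(c)<\sigma(t)=\tau(t)$, the pipes of rows $a$ and $t$ do not form an inversion and hence may not cross in a reduced diagram, yet lifting $(t,m)$ would route the row-$t$ pipe upward through column $m$, where it must cross the horizontal row-$a$ pipe at $(a,m)$, contradicting reducedness. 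The only permissible modification is therefore lifting the horizontal tile at $(a,m)$ itself, turning it into the elbow joining its top and right edges, with the row-$a$ pipe then exiting through the top of the region; this is precisely the alternative drawn with thin strands in Figure~\ref{fig:sigac}, so the configuration stays within the claimed family.

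Finally I would close the argument by observing that once $(a,m)$ has been lifted the region again contains no empty tile and no liftable non-rightmost tile, so no further droop or lift can change it; since every QBPD of $\tau$ arises from the Rothe diagram by such moves, all of them restrict to Figure~\ref{fig:sigac}. I expect the reducedness bookkeeping in the middle step to be the main obstacle: one must verify carefully that lifting $(t,m)$ for $a<t<c$ is genuinely blocked by the row-$a$ pipe rather than merely rerouted around it, and, dually, that the single allowed lift at $(a,m)$ introduces no illegal second crossing. The rest is a direct read-off of the Rothe diagram together with the same elementary \emph{no empty tile, no liftable tile} observations already deployed in Propositions~\ref{prop:configsig} and~\ref{prop:sigcagtr}.
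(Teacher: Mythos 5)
Your proposal is correct and follows essentially the same route as the paper's proof: read off the Rothe diagram of $\tau=\sigma t_{ac}$ from Remark \ref{cond} (including $c<b$), rule out droops by the absence of empty tiles, restrict lift candidates to column $m$ via Remark \ref{uplast}, block the lift of $(t,m)$ for $a<t<c$ by the non-inversion/reducedness argument against pipe $a$, and leave only the lift at $(a,m)$ giving the thin-blue alternative in Figure \ref{fig:sigac}. One small wording caveat: after lifting $(a,m)$ the tiles $(t,m)$ with $a<t<c$ are still horizontal non-rightmost tiles, so your closing phrase ``no liftable non-rightmost tile'' should instead invoke the same blocking argument again (pipe $t$ still cannot ascend column $m$ past the elbow now occupying $(a,m)$), which is exactly the check you flag in your final paragraph and is left equally implicit in the paper.
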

We now analyze QBPDs for permutations of the form $\sigma t_{ca}\lhd \sigma, c  < a$. 
\begin{proposition}\label{ni}
    In Setup \ref{su}, we have the following properties:
    \begin{itemize}
        \item[(a)] If $S\neq \varnothing$ then $a_1 = a-1$.
        \item[(b)] $p_k > p_{k-1}> \dots > p_1 > p_0 = m$.
        \item[(c)] If $a_i < j < a_{i-1}$ for some $i$, then $m < \sigma(j) < p_{i-1}$.
        \item[(d)] If $a_i < j < a$ for some $i$ and $j\neq a_{i-1}$, then $m < \sigma(j) < p_{i-1}$.
        \item[(e)] If $a_i < j$ for some $i$ and $\sigma(j) > \sigma(a_{i-1})$, then $j = b$.
    \end{itemize}
    \begin{proof}
        For (a), if $S$ is not empty, suppose $s\in S$. Then by Remark $\ref{cond}$, $\sigma(s) > \sigma(t) > \sigma(a)$ for any $s < t < a$. In particular, if $s\neq a-1$, then $\sigma(a-1) > \sigma(a)$, which means $\sigma t_{a-1,a}\lhd \sigma$. Thus, $a-1\in S$ (and if $s = a-1$, we also have $a-1\in S$). By the ordering, $a_1 = a-1$.
        
        For (b), for each $1 \leq i \leq k$, note that $\sigma t_{a_i a} \lhd \sigma$. So, by Remark \ref{cond}, we have $\sigma(a_{i}) > \sigma(a)$ and $\sigma (a_i) > \sigma(t) >  \sigma(a)$ for all $a_i < t < a$. In particular, if $i > 1$, we have $\sigma(a_i) > \sigma(a_{i-1})$, which is $p_i > p_{i-1}$, and if $i = 1$, we have $\sigma(a_1) > \sigma(a)$, which is $p_1 > p_0  =m$.
        
        For (c), the statement is vacuously true when $i = 1$ since $a_1 = a_0  - 1$, so fix $i>1$ and $j$ such that $a_{i-1} < j < a_i$. We have $\sigma t_{a_i a} \lhd \sigma $, so by Remark \ref{cond}, we have $\sigma (a_i) > \sigma(t) >  \sigma(a)$ for all $a_i < t < a$. In particular, $\sigma(j) > \sigma(a) = m$. Now, suppose for contradiction that $\sigma(j) > \sigma(a_{i-1})$. Consider the set \begin{equation}X:= \{q < a_{i-1}: \sigma(q) > \sigma(a_{i-1})\}.\end{equation}
        Note that $j\in X$. Let $h$ be the maximum element of $X$. Then, $a_{i-1}> h\geq j > a_i$. We will show that $h\in S$, contradicting the fact that $S$ has no element strictly between $a_i$ and $a_{i-1}$ by the ordering. Consider any $r$ such that $h < r < a$. If $r > a_{i-1}$ we have $a_{i-1} <r < a$, and $\sigma t_{a_{i-1}a}\lhd \sigma $, so by Remark \ref{cond}, we have $\sigma(h) >\sigma(a_{i-1})>\sigma (r) > \sigma(a)$. If $r = a_{i-1}$ then $\sigma(h) > \sigma(a_{i-1}) > \sigma(a)$ by assumption. If $r < a_{i-1}$ then $h < r < a_{i-1}$, so $r\notin X$ (since $h$ is maximum of $X$). So, $\sigma(r) < \sigma(a_{i-1})$ and thus $\sigma(h) > \sigma(a_{i-1}) > \sigma(r) > \sigma(a)$. Thus, we showed that for any $h < r < a$,  we have $\sigma(h) > \sigma(r)> \sigma(a)$. Since we have $\sigma(h) > \sigma(a)$, Remark \ref{cond} gives $\sigma t_{ha}\lhd \sigma$, so $h\in S$.
        
        For (d), the statement is also vacuously true when $i = 1$, and when $i > 1$, by (c), we only need to consider the case $a_{i-1} < j < a$. However, we have $\sigma t_{a_{i-1}a}\lhd \sigma$. Then, by Remark \ref{cond}, we have $\sigma(a_{i-1}) > \sigma(j) > \sigma(a)$, which is $m < \sigma(j) < p_{i-1}$.
        
        For (e), by (d), we only need to consider $j\geq a$. We have $\sigma(j) > \sigma(a_{i-1})$ by assumption and $\sigma(a_{i-1}) \geq \sigma(a)$ by (b), so $j\neq a$ and thus $j > a$. However, recall that $b$ is the only number greater than $a$ such that $\sigma(b) > \sigma(a)$, so $j = b$.
    \end{proof}
\end{proposition}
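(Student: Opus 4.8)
The plan is to treat everything as purely combinatorial statements about the one-line values of $\sigma$, with the only input being the length criterion of Remark \ref{cond}. For $c < a$, that criterion says $\sigma t_{ca} \lhd \sigma$ exactly when $\sigma(c) > \sigma(a) = m$ and every value strictly between positions $c$ and $a$ lies strictly between $m$ and $\sigma(c)$. Unpacking this for each $a_i \in S$ gives, for every position $t$ with $a_i < t < a$, the sandwich $m < \sigma(t) < p_i$, and this single observation drives all five parts.

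Parts (a) and (b) I expect to be short. For (a), given any $s \in S$ the sandwich forces $\sigma(a-1) > m$ whenever $s \neq a-1$; since no position lies strictly between $a-1$ and $a$, the criterion then gives $\sigma t_{a-1,a} \lhd \sigma$, so $a-1 \in S$ and hence equals the maximal element $a_1$. For (b), I apply the sandwich for $a_i$ to the interior position $t = a_{i-1}$ (valid because $a_i < a_{i-1} < a$) to read off $p_i = \sigma(a_i) > \sigma(a_{i-1}) = p_{i-1}$, and chaining down to $p_0 = m$ yields the full strictly increasing chain.

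The crux, and the step I expect to be the main obstacle, is the upper bound in (c). The lower bound $\sigma(j) > m$ is immediate from the sandwich for $a_i$. For the upper bound I would argue by contradiction: assuming $\sigma(j) > p_{i-1}$, I want to exhibit an element of $S$ strictly between $a_i$ and $a_{i-1}$, which is impossible since these are consecutive in the ordering of $S$. The difficulty is that $j$ itself need not satisfy the full membership condition for $S$, so I cannot simply take $j$. The fix is to pass to the maximal position $h$ in the set $X := \{q < a_{i-1} : \sigma(q) > p_{i-1}\}$, which contains $j$ and hence is nonempty, so that $a_i < j \le h < a_{i-1}$. Maximality is exactly what forces $h \in S$: for any $r$ with $h < r < a$ I check $m < \sigma(r) < \sigma(h)$ by a three-way split on the position of $r$ relative to $a_{i-1}$, using the sandwich for $a_{i-1}$ when $r > a_{i-1}$, the fact that $h \in X$ when $r = a_{i-1}$, and maximality of $h$ (so $r \notin X$) together with the sandwich for $a_i$ when $r < a_{i-1}$. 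This produces the forbidden element $h \in S$ and closes the contradiction.

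Finally (d) and (e) should fall out cleanly. For (d) I split the range $a_i < j < a$, $j \neq a_{i-1}$, into $j < a_{i-1}$, handled by (c), and $a_{i-1} < j < a$, handled directly by the sandwich for $a_{i-1}$. For (e), part (d) rules out any $j$ strictly between $a_i$ and $a$ (other than $a_{i-1}$) with $\sigma(j) > p_{i-1}$, while $j = a_{i-1}$ and $j = a$ give $\sigma(j) \le p_{i-1}$ by (b); hence $j > a$, and since $\sigma(j) > p_{i-1} \ge m$ the defining property that $b$ is the unique position past $a$ with value exceeding $m$ forces $j = b$.
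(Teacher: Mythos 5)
Your proposal is correct and follows essentially the same route as the paper's proof: the same reduction of everything to the length criterion of Remark \ref{cond}, the same short arguments for (a), (b), (d), (e), and in (c) the identical contradiction argument via the maximal element $h$ of the auxiliary set $X=\{q<a_{i-1}:\sigma(q)>p_{i-1}\}$ with the same three-way case split on $r$ relative to $a_{i-1}$. The only (welcome) difference is that you state explicitly that the lower bound $\sigma(r)>m$ in the case $r<a_{i-1}$ comes from the sandwich for $a_i$, a point the paper leaves implicit.
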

\begin{proposition}
    In Setup \ref{su}, if $S\neq \varnothing$, any QBPD for $\sigma t_{a-1, a}$ has one of the two configurations in Figure \ref{fig:cfgxy}.
    \begin{figure}[h!]
        \centering
        \begin{tikzpicture}[scale=\figscale]
            \node at (0.5, -0.3) {$m$};
            \node at (6.5, -0.3) {$n$};
            \node at (2.5, -0.3) {$p_1$};
            \node at (3.5, -0.75) {$X_1$};
            \node at (7.3, 5.5) {$a$};
            \node at (7.85, 6.5) {$a-1$};
            \node at (7.3, 1.5) {$b$};
            \draw (0, 0) grid (7,7);
            \foreach \i in {1,2,3}
                {\draw[green] (2+\i.5, 0)--(2+\i.5, 7);
                \draw[green] (0, 1+\i.5)--(7,1+\i.5);
                } 
            \draw[green] (0, 0.5)--(7, 0.5);   
            \draw[green] (1.5, 0)--(1.5, 7);
            \draw[blue, ultra thick] (1, 6.5)--(7, 6.5);
            \draw [blue, ultra thick](1,6.5) arc (90:180:0.5);
            \draw[blue, ultra thick] (0.5, 0)--(0.5, 6);
            \draw[blue, ultra thick] (2.5, 0)--(2.5, 5);
            \draw[blue, ultra thick] (3, 5.5)--(7, 5.5);
            \draw[blue, ultra thick] (3, 5.5) arc(90:180:0.5);
            \draw[blue, ultra thick] (6.5, 0)--(6.5, 1);
            \draw[blue, ultra thick] (6.5, 1) arc (180:90:0.5);
        \end{tikzpicture}\begin{tikzpicture}[scale=\figscale]
            \node at (0.5, -0.3) {$m$};
            \node at (6.5, -0.3) {$n$};
            \node at (2.5, -0.3) {$p_1$};
            \node at (3.5, -0.75) {$Y_1$};
            \node at (7.3, 5.5) {$a$};
            \node at (7.85, 6.5) {$a-1$};
            \node at (7.3, 1.5) {$b$};
            \draw (0, 0) grid (7,7);
            \foreach \i in {1,2,3}
                {\draw[green] (2+\i.5, 0)--(2+\i.5, 7);
                \draw[green] (0, 1+\i.5)--(7,1+\i.5);
                } 
            \draw[green] (0, 0.5)--(7, 0.5);   
            \draw[green] (1.5, 0)--(1.5, 7);
            \draw[blue, ultra thick] (3, 6.5)--(7, 6.5);
            \draw [blue, ultra thick](3,6.5) arc (-90:-180:0.5);
            \draw[blue, ultra thick] (0.5, 0)--(0.5, 7);
            \draw[blue, ultra thick] (2.5, 0)--(2.5, 5);
            \draw[blue, ultra thick] (3, 5.5)--(7, 5.5);
            \draw[blue, ultra thick] (3, 5.5) arc(90:180:0.5);
            \draw[blue, ultra thick] (6.5, 0)--(6.5, 1);
            \draw[blue, ultra thick] (6.5, 1) arc (180:90:0.5);
        \end{tikzpicture}
        \caption{Configurations of QBPDs of $\sigma t_{a-1, a}$}
        \label{fig:cfgxy}
    \end{figure}
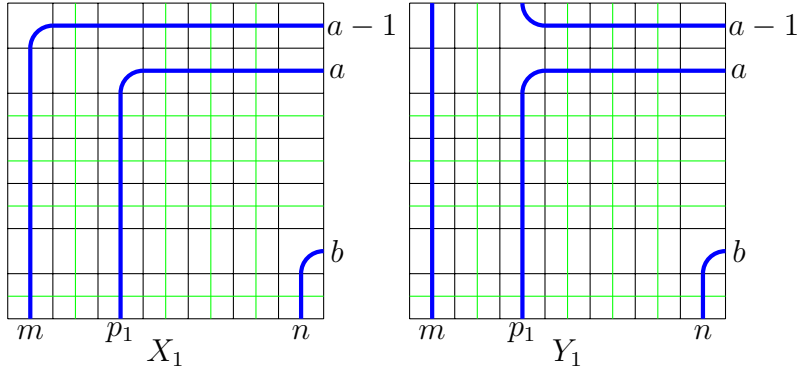
    \begin{proof}
        The Rothe diagram has configuration $X_1$. Droop moves cannot change the region, and the only tile that could possibly be turned upward by a lift move is the tile at $(a-1, p_1)$. If it got turned up by a lift move, we get configuration $Y_1$. When we reach configuration $Y_1$, no more droop or lift moves can change the configuration.
    \end{proof}
\end{proposition}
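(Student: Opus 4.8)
The plan is to follow the template of Propositions~\ref{prop:configpi}--\ref{prop:sigcagtr}: compute the Rothe diagram of $\tau := \sigma t_{a-1,a}$ restricted to the window $[a-1,n]\times[m,n]$, and then use Lemma~\ref{lem:droopliftgen} to track exactly how droop and lift moves can alter this window. First I record the relevant values of $\tau$. Since $S\neq\varnothing$, Proposition~\ref{ni}(a) gives $a_1=a-1$, so $\tau(a-1)=\sigma(a)=m$ and $\tau(a)=\sigma(a-1)=p_1$, while $\tau$ agrees with $\sigma$ elsewhere; Proposition~\ref{ni}(b) gives $p_1>m$, and since $\sigma(b)=n$ is attained only at $b$ we get $m<p_1<n$. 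Using the Setup~\ref{su} fact that $b$ is the unique row exceeding $a$ whose $\sigma$-value is larger than $m$, the rows $i\in[a-1,n]$ whose pipes turn downward inside the window are exactly $a-1$ (to column $m$), $a$ (to column $p_1$), and $b$ (to column $n$); every other row $i$ in this range has $\tau(i)<m$, so its pipe runs straight across the window, and every column $j\in[m,n]\setminus\{p_1\}$ whose value comes from a row above $a-1$ carries a straight vertical strand. This is precisely configuration $X_1$, with the green strands straight as drawn.

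Next I would show the window is free of empty tiles, which (as in Proposition~\ref{prop:configsig}) immediately rules out droop moves. An empty tile at $(i,j)$ in the window is a box of the Rothe diagram of $\tau$, i.e.\ $m\le j<\tau(i)$ with $\tau^{-1}(j)>i$. For $i=a-1$ this forces $j<m$, impossible; for $i=a$ it forces $j\in[m,p_1)$, but no row after $a$ takes a value in $[m,p_1)$ (only $b$ exceeds $m$, with $\sigma(b)=n$), so $\tau^{-1}(j)<a$ and the requirement $\tau^{-1}(j)>a$ fails; and for every remaining row $\tau(i)<m$ leaves no room. Hence no empty tile occurs, so no droop can change the window and in particular no domino can appear in it.

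The crux is the lift analysis. By Remark~\ref{uplast} nothing can be lifted upward in column $n$, so I only consider columns $m\le j<n$. The key observation is that every column $j$ with $m<j<n$ and $j\neq p_1$ carries a straight vertical strand throughout the window (its value comes from a row above $a-1$, by the count above), column $m$ carries the downward strand of pipe $a-1$, and column $p_1$ carries the downward strand of pipe $a$ from row $a$ downward. Thus the only cell from which a strand can be routed upward without immediately overlapping one of these verticals is $(a-1,p_1)$, whose cell above lies in the free upper part of column $p_1$. A strand can reach $(a-1,p_1)$ and turn up only if it lies on row $a-1$: routing pipe $a$ up through $(a-1,p_1)$ is forbidden because $\tau(a-1)=m<p_1=\tau(a)$ makes pipes $a-1$ and $a$ non-inverted, so they may not cross. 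Therefore the unique window-changing lift sends pipe $a-1$ up at $(a-1,p_1)$, detouring up, left, and down column $m$, which is valid and reduced and yields exactly configuration $Y_1$. I expect this enumeration to be the main obstacle, since it requires combining the blocking by green verticals with the non-crossing constraint to isolate $(a-1,p_1)$.

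Finally I would verify that $Y_1$ is terminal. The passage from $X_1$ to $Y_1$ only replaces the row-$(a-1)$ tiles on columns $m\le j<p_1$ by vertical strands (green verticals for $m<j<p_1$, and the downward part of pipe $a-1$ at column $m$), so $Y_1$ is again empty-tile free, ruling out droops and dominoes; and the same column-by-column blocking argument, now applied to the shortened row-$(a-1)$ horizontal strand on columns $[p_1,n)$, shows no further lift can alter the window. Hence every QBPD of $\tau$ restricts to $X_1$ or $Y_1$. The Rothe-diagram and empty-tile steps are routine given Setup~\ref{su} and Proposition~\ref{ni}; the lift step of the third paragraph is where the real work lies.
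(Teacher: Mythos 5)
Your proposal is correct and follows essentially the same route as the paper's proof: the Rothe diagram of $\tau=\sigma t_{a-1,a}$ gives $X_1$ on the window, the absence of empty tiles there rules out droops and dominoes, the blocking vertical strands isolate $(a-1,p_1)$ as the only tile a lift can turn upward (yielding $Y_1$), and $Y_1$ is terminal---your version just supplies more detail than the paper does. One minor slip to repair: in your empty-tile analysis the claim ``for every remaining row $\tau(i)<m$'' is false for row $b$ (where $\tau(b)=n$); that row contributes no boxes in the window for a different reason, already implicit in your first paragraph, namely that every column $j\in[m,n)$ of the window satisfies $\tau^{-1}(j)\le a<b$, so the condition $\tau^{-1}(j)>b$ for a Rothe box at $(b,j)$ can never hold.
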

\begin{proposition}
    In Setup \ref{su}, for $i > 1$, any QBPD for $\sigma t_{a_ia}$ has one of the two configurations as in Figure \ref{fig:cfgfin}.
    \begin{figure}
        \centering
        \begin{tikzpicture}[scale=\figscale]
            \node at (6.5, -0.3) {$n$};
            \node at (2.5, -0.3) {$p_{i-1}$};
            \node at (4.5, -0.3) {$p_i$};
            \node at (4.5, -1.1) {$X_i$};
            \node at (7.32, 9.5) {$a_i$};
            \node at (7.3, 5.5) {$a$};
            \node at (7.6, 7.5) {$a_{i-1}$};
            \node at (7.3, 1.5) {$b$};
            \draw (2, 0) grid (7,8);
            \foreach \i in {1,2,3}
                {
                \draw[green] (2, 1+\i.5)--(7,1+\i.5);
                } 
            \draw[green] (2, 0.5)--(7, 0.5);   
            \draw[green] (2, 6.5)--(7, 6.5);
            \draw[blue, ultra thick] (3, 7.5)--(7, 7.5);
            \draw [blue, ultra thick](3,7.5) arc (90:180:0.5);
            \draw[blue, ultra thick] (2.5, 0)--(2.5, 7);
            \draw[blue, ultra thick] (4.5, 0)--(4.5, 5);
            \draw[blue, ultra thick] (5, 5.5)--(7, 5.5);
            \draw[blue, ultra thick] (5, 5.5) arc(90:180:0.5);
            \draw[blue, ultra thick] (6.5, 0)--(6.5, 1);
            \draw[blue, ultra thick] (6.5, 1) arc (180:90:0.5);
            \draw (2,8) grid (7,10);
            \foreach \i in {0, 1}{
                \draw[green] (3.5+\i+\i, 0)--(3.5+\i+\i, 10);
            }
            \draw[green] (2,8.5)--(7,8.5);
            \draw[blue, ultra thick] (3, 9.5)--(7, 9.5);
            \draw[blue] (2.5, 10) arc (180:270:0.5);
            \draw[blue] (2, 9.5)--(3,9.5);
        \end{tikzpicture}
        \begin{tikzpicture}[scale=\figscale]
            \node at (6.5, -0.3) {$n$};
            \node at (2.5, -0.3) {$p_{i-1}$};
            \node at (4.5, -0.3) {$p_i$};
            \node at (4.5, -1.1) {$Y_i$};
            \node at (7.32, 9.5) {$a_i$};
            \node at (7.3, 5.5) {$a$};
            \node at (7.6, 7.5) {$a_{i-1}$};
            \node at (7.3, 1.5) {$b$};
            \draw (2, 0) grid (7,8);
            \foreach \i in {1,2,3}
                {
                \draw[green] (2, 1+\i.5)--(7,1+\i.5);
                } 
            \draw[green] (3.5, 0)--(3.5, 8);
            \draw[green] (2, 0.5)--(7, 0.5);   
            \draw[green] (2, 6.5)--(7, 6.5);
            \draw[blue, ultra thick] (3, 7.5)--(7, 7.5);
            \draw [blue, ultra thick](3,7.5) arc (90:180:0.5);
            \draw[blue, ultra thick] (2.5, 0)--(2.5, 7);
            \draw[blue, ultra thick] (4.5, 0)--(4.5, 5);
            \draw[blue, ultra thick] (5, 5.5)--(7, 5.5);
            \draw[blue, ultra thick] (5, 5.5) arc(90:180:0.5);
            \draw[blue, ultra thick] (6.5, 0)--(6.5, 1);
            \draw[blue, ultra thick] (6.5, 1) arc (180:90:0.5);
            \draw (4,8) grid (7,10);
            \draw[green] (5.5, 0)--(5.5, 10);
            \draw[green] (4,8.5)--(7,8.5);
            \draw[blue, ultra thick] (5, 9.5)--(7, 9.5);
            \draw[blue, ultra thick] (4.5, 10) arc (180:270:0.5);
        \end{tikzpicture}
        \caption{Configurations of QBPDs of $\sigma t_{a_ia} \lhd \sigma, i > 1$}
        \label{fig:cfgfin}
    \end{figure}
    \begin{proof}
         Let $\tau = \sigma t_{a_{i}a}$. If $t > a_i$ and $\sigma(t) >\tau(a_{i-1}) = p_{i-1}$, then $t = b$ by Proposition \ref{ni} (e). Since $\tau(t) = \sigma(t)$ except when $t\in \{a, a_i\}$ and $\tau(a) = \sigma(a_i) = p_i > p_{i-1}$, we have if $t > a_i$ and $\tau(t) > p_{i-1}$, then $t\in \{a, b\}$. Thus, the Rothe diagram for $\tau$ has configuration $X_i$ with the tile at $(a_i, p_{i-1})$ being a \begin{tikzpicture}[scale=\inlinescale]
\draw (0, 0) grid (1,1);
\draw[blue](0,0.5)--(1,0.5);
\end{tikzpicture} tile. Now, no droop moves could affect the $[a_i, n]\times [p_{i-1}, n]$ region. For lift moves, the only tiles that could be turned up are \begin{tikzpicture}[scale=\inlinescale]
\draw (0, 0) grid (1,1);
\draw[blue](0,0.5)--(1,0.5);
\end{tikzpicture} tiles on column $p_{i-1}$ and $p_i$. For $a_i < t < a$, since $\tau(t) = \sigma(t) > \sigma(a) = \tau(a_i)$, the pipe starting on row $t$ cannot intersect with the pipe starting on row $a_i$, so we cannot use a lift move on \begin{tikzpicture}[scale=\inlinescale]
\draw (0, 0) grid (1,1);
\draw[blue](0,0.5)--(1,0.5);
\end{tikzpicture} tiles at $(t, p_{i-1})$ or $ (t, p_i)$. Thus, the only possible tiles that can be turned up using lift moves are at $(a_i, p_{i-1})$ or $(a_i, p_{i})$, which gives the configuration $X_i$ with $(a_i, p_{i-1})$ being a \begin{tikzpicture}[scale=\inlinescale]
\draw (0, 0) grid (1,1);
\draw[blue](0.5,1) arc (180:270:0.5);
\end{tikzpicture} tile or configuration $Y_i$. After this, the tiles shown in configuration $Y_i$ cannot be further changed. The configuration $X_i$ with $(a_i, p_{i-1})$ being a \begin{tikzpicture}[scale=\inlinescale]
\draw (0, 0) grid (1,1);
\draw[blue](0.5,1) arc (180:270:0.5);
\end{tikzpicture} tile also cannot be changed further.
    \end{proof}
\end{proposition}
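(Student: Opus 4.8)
The plan is to reuse the template of the previous configuration propositions: first compute the Rothe diagram of $\tau := \sigma t_{a_i a}$ inside the window $[a_i,n]\times[p_{i-1},n]$, and then invoke Lemma~\ref{lem:droopliftgen} to determine which droop and lift moves can alter that window. Since $t_{a_i a}$ only changes the values at the positions $a_i$ and $a$, we have $\tau(a_{i-1})=\sigma(a_{i-1})=p_{i-1}$, $\tau(a)=\sigma(a_i)=p_i$, $\tau(a_i)=\sigma(a)=m$, and $\tau(b)=\sigma(b)=n$, with $p_{i-1}<p_i$ by Proposition~\ref{ni}(b). The one structural input I need is that the only rows $t>a_i$ with $\tau(t)>p_{i-1}$ are $t=a$ and $t=b$: applying Proposition~\ref{ni}(e) to $\sigma$ (using $\sigma(a_{i-1})=p_{i-1}$) shows $b$ is the unique $\sigma$-unchanged such row, while $\tau(a)=p_i>p_{i-1}$ adds $a$ and $\tau(a_i)=m<p_{i-1}$ excludes $a_i$. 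Hence the only vertical strands meeting the window lie in columns $p_{i-1}$, $p_i$, and $n$, every remaining pipe through a row $>a_i$ leaves to the left of column $p_{i-1}$, and the pipe $a_i\to m$ runs straight across the top row and exits leftward; this is exactly configuration $X_i$ of Figure~\ref{fig:cfgfin} with a horizontal tile at $(a_i,p_{i-1})$.

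Next I would rule out moves that change the window. Configuration $X_i$ has no empty tile in the window, so no droop move can act there (and no domino can form inside it). A lift move must turn a horizontal tile upward, and by the vertical-strand analysis above the only columns in which such a lift could terminate are $p_{i-1}$ and $p_i$, since column $n$ is excluded by Remark~\ref{uplast}. The crucial step is to confine these lifts to row $a_i$. For any $t$ with $a_i<t<a$, Proposition~\ref{ni}(d) gives $\tau(t)=\sigma(t)>m=\tau(a_i)$, so the pipes beginning on rows $a_i$ and $t$ form a non-inversion and therefore never cross in a reduced QBPD of $\tau$; lifting the horizontal tile at $(t,p_{i-1})$ or $(t,p_i)$ would route the pipe of row $t$ up and over the straight pipe $a_i\to m$, forcing a crossing that $\tau$ forbids. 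Thus only the pipe $a_i\to m$ is liftable.

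Finally I would enumerate the outcomes: lifting $a_i\to m$ so that it turns upward in column $p_{i-1}$ replaces the horizontal tile at $(a_i,p_{i-1})$ by an elbow in which the pipe turns up, which is still configuration $X_i$; lifting it so that it turns upward in column $p_i$ yields configuration $Y_i$. In each case the window again contains no empty tile and no liftable horizontal tile other than in the rightmost relevant column, so no further droop or lift move alters it and both configurations are terminal. I expect the main obstacle to be the crossing bookkeeping of the second step: correctly arguing, via Proposition~\ref{ni}(d), that every candidate lift on a row strictly between $a_i$ and $a$ is blocked because it would force the pipe of row $t$ across the straight top pipe $a_i\to m$. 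This is where the combinatorics of Proposition~\ref{ni} genuinely enters, and an error in which rows are forced to run straight would invalidate the enumeration of $X_i$ and $Y_i$.
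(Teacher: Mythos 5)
Your proposal is correct and follows essentially the same route as the paper's own proof: determine the Rothe diagram of $\tau=\sigma t_{a_i a}$ inside the window $[a_i,n]\times[p_{i-1},n]$ via Proposition~\ref{ni}(e), rule out droop moves (no empty tiles) and lifts terminating in column $n$ (Remark~\ref{uplast}), block lifts on rows strictly between $a_i$ and $a$ by the reducedness/non-crossing argument coming from Proposition~\ref{ni}(c)--(d), and conclude that only the lifts at $(a_i,p_{i-1})$ and $(a_i,p_i)$ survive, yielding $X_i$ and $Y_i$ as terminal configurations. The only slip is your sentence asserting that the vertical strands in the window lie solely in columns $p_{i-1}$, $p_i$, $n$ --- the green columns (those $j>p_{i-1}$ with $\tau^{-1}(j)<a_i$) are vertical throughout the window, and it is precisely this that prevents lifts from terminating there --- but your subsequent conclusions rely on the correct fact, exactly as the paper does.
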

We now start giving bijections between QBPDs and give the relationships between their $\bwt$'s (as defined in Definition \ref{def:bwt}). Define pipe $i$ to be the pipe starting at row $i$. Write $\QBPD(w, H)$ for the set of QBPDs for $w$ with configuration $H$. For example, $\QBPD(\pi, A)$ denotes the set of QBPDs of $\pi$ with configuration $A$.
\begin{lemma}\label{lem:phia}
    In Setup \ref{su}, there is a bijection $\phi_A\colon \QBPD(\pi, A)\to \QBPD(\sigma)$ such that $(x_a-y_m)\bwt(\phi_A(P)) = \bwt(P)$ for all $P\in \QBPD(\pi , A)$. 
    \begin{proof}
        Recall that $\pi = \sigma t_{ab}$ and $m = \sigma(a) = \pi(b)$. Also recall Configuration $A$ from Proposition \ref{prop:configpi}, and recall Proposition \ref{prop:configsig}. The bijection $\phi_A$ is formed by making the change in Figure \ref{phia}.
        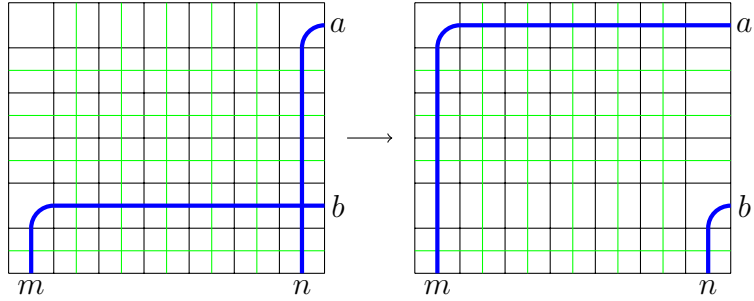
\begin{figure}[h!]
            \centering
            \begin{tikzpicture}[scale=\figscale]
                \node at (0.5, -0.3) {$m$};
                \node at (6.5, -0.3) {$n$};
                \node at (7.3, 5.5) {$a$};
                \node at (7.3, 1.5) {$b$};
                \draw (0, 0) grid (7,6);
                \foreach \i in {1,2,3}
                    {\draw[green] (\i.5, 0)--(\i.5, 6);
                    \draw[green] (0, 1+\i.5)--(7,1+\i.5);
                    } 
                \draw[green] (0, 0.5)--(7, 0.5);   
                \draw[green] (5.5, 0)--(5.5, 6);
                \draw[green] (4.5, 0)--(4.5, 6);
                \draw[blue, ultra thick] (1, 1.5)--(7, 1.5);
                \draw [blue, ultra thick](1,1.5) arc (90:180:0.5);
                \draw[blue, ultra thick] (0.5, 0)--(0.5, 1);
                \draw[blue, ultra thick] (6.5, 0)--(6.5, 5);
                \draw[blue, ultra thick] (6.5, 5) arc (180:90:0.5);
                \draw[->] (7.5, 3)--(8.5, 3);
                \begin{scope}[shift={(9,0)}]
                    
                        \node at (0.5, -0.3) {$m$};
                        \node at (6.5, -0.3) {$n$};
                        \node at (7.3, 5.5) {$a$};
                        \node at (7.3, 1.5) {$b$};
                        \draw (0, 0) grid (7,6);
                        \foreach \i in {1,2,3}
                            {\draw[green] (\i.5, 0)--(\i.5, 6);
                            \draw[green] (0, 1+\i.5)--(7,1+\i.5);
                            } 
                        \draw[green] (0, 0.5)--(7, 0.5);   
                        \draw[green] (5.5, 0)--(5.5, 6);
                        \draw[green] (4.5, 0)--(4.5, 6);
                        \draw[blue, ultra thick] (1, 5.5)--(7, 5.5);
                        \draw [blue, ultra thick](1,5.5) arc (90:180:0.5);
                        \draw[blue, ultra thick] (0.5, 0)--(0.5, 5);
                        \draw[blue, ultra thick] (6.5, 0)--(6.5, 1);
                        \draw[blue, ultra thick] (6.5, 1) arc (180:90:0.5);
                    
                \end{scope}
            \end{tikzpicture}
            \caption{The bijection $\phi_A$}\label{phia}
        \end{figure}
        All the changed pipes are completely in the changed region and one can see that they do not cross any pipe twice on both sides of the bijection. To reverse the map, we simply need to make the reverse change, so this is a bijection. The LHS has one more empty cell at $(a,m)$ compared to the RHS, so we have $(x_a-y_m)\bwt(\phi_A(P)) = \bwt(P)$ for all $P\in \QBPD(\pi , A)$.
    \end{proof}
\end{lemma}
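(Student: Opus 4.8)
The plan is to realize $\phi_A$ as a purely local move inside the rectangle $R=[a,n]\times[m,n]$, leaving every other tile of $P$ untouched. By Proposition~\ref{prop:configpi}, a QBPD $P\in\QBPD(\pi,A)$ restricted to $R$ has pipe $a$ running straight down column $n$, pipe $b$ running left along row $b$ and then down column $m$, a single cell $(a,m)$ that is empty, and a background of ``green'' pipes that pass straight through. By Proposition~\ref{prop:configsig}, a QBPD of $\sigma$ restricted to $R$ has pipe $a$ running left along row $a$ and then down column $m$, pipe $b$ running straight down column $n$, and the \emph{same} background of green pipes. I would therefore \emph{define} $\phi_A(P)$ by replacing the Configuration-$A$ pattern of pipes $a,b$ inside $R$ with the $\sigma$-pattern, and define $\phi_A^{-1}$ by the reverse replacement. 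In both patterns the pipes entering $R$ from the right lie on rows $a,b$ and the pipes exiting $R$ at the bottom lie in columns $m,n$, so the set of boundary edges of $R$ crossed by a pipe is identical; hence the replacement glues consistently to the unchanged exterior and produces a genuine tiling.

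The first substantive check is that $\phi_A(P)$ is a \emph{valid, reduced} QBPD of $\sigma$, and symmetrically for $\phi_A^{-1}$. Since only pipes $a$ and $b$ are altered, and they remain within $R$—which meets the bottom and right edges of the grid, so nothing relevant lies below row $n$ or right of column $n$—any new double crossing would have to involve pipe $a$ or pipe $b$. Inside $R$ each of these pipes meets a given green pipe at most once, and pipes $a,b$ meet each other once in Configuration $A$ and not at all in the $\sigma$-pattern, which is consistent with $(a,b)$ being an inversion of $\pi$ but not of $\sigma$; outside $R$ their crossing pattern is unchanged. A parity argument of the type used in Lemma~\ref{lem:droopliftgen}, namely that the number of crossings of two fixed pipes cannot change parity under the move, then rules out any double crossing. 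Because Proposition~\ref{prop:configsig} forces \emph{every} QBPD of $\sigma$ to carry exactly the $\sigma$-pattern on $R$, the reverse move lands in $\QBPD(\pi,A)$, so $\phi_A$ is a bijection.

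It remains to compare binomial weights. The key input, which follows from $m=\pi(b)=\max_{a<i\le n}\pi(i)$, is that every row $r$ with $a<r\le n$ and $r\neq b$ satisfies $\sigma(r)=\pi(r)<m$ and so appears in $R$ as a straight leftward pipe, while every column $c$ with $m<c<n$ carries a straight downward pipe. Thus $R$ is saturated by pipes apart from the single empty cell $(a,m)$ in Configuration $A$, which becomes an elbow in the $\sigma$-pattern, leaving $R$ with no empty cell at all. Since the green pipes only move downward and leftward they contribute no $q$-factor, and in either pattern neither distinguished pipe produces a southwest elbow, an upward vertical, an upward crossing, or a domino; hence the entire $q$-part of $\bwt$ agrees on the two sides. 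The only discrepancy is the empty cell $(a,m)$, which contributes $x_a-y_m$ on the $\pi$-side, giving exactly $\bwt(P)=(x_a-y_m)\,\bwt(\phi_A(P))$.

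The step I expect to demand the most care is this last bookkeeping together with reducedness: asserting that the two restrictions to $R$ differ by the single cell $(a,m)$ relies on the saturation statement above, and the slogan ``only pipes $a,b$ change'' must be backed by an honest, if short, verification that no pair of pipes crosses twice after the move. Once the two local pictures and the green background are pinned down, the remainder is routine.
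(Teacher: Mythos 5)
Your proposal is correct and is essentially the paper's own proof: the same local replacement of the two distinguished pipes inside $[a,n]\times[m,n]$ (the paper's Figure \ref{phia}), with bijectivity coming from Propositions \ref{prop:configpi} and \ref{prop:configsig} forcing the two rigid local patterns, and the weight identity coming from the single empty cell $(a,m)$; your write-up is in fact more detailed than the paper's on boundary gluing and reducedness. One caveat: your closing appeal to a parity argument is both unnecessary (your direct crossing count already rules out double crossings) and false as stated for the pair of pipes $a$ and $b$, whose number of crossings changes from $1$ to $0$ under the move, as it must since the underlying permutation changes from $\pi$ to $\sigma$.
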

\begin{lemma}\label{lem:phib}
    In Setup \ref{su}, there is a bijection $\phi_B\colon \QBPD(\pi, B)\to \bigcup\limits_{\substack{c < a\\\sigma t_{ca} \gtrdot \sigma}}\QBPD(\sigma t_{ca})$ such that $\bwt(\phi_B(P)) = \bwt(P)$ for all $P\in \QBPD(\pi, B)$.
    \begin{proof}
        The bijection $\phi_B$ is formed by making the change in Figure \ref{fig:phib}. Let $c$ be the row at which the pipe of the elbow at $(a, m)$ on the LHS of the bijection starts. Then $c < a$, and if $c < t < a$, then pipe $t$ must either pass through a cell in column $m$ that is above the $(a, m)$ elbow or eventually become one of the vertical green pipes. If it passes through a cell in column $m$ that is above the $(a, m)$ elbow, it gets above  pipe $c$, but it started below pipe $c$, so $\sigma(t) < \sigma(c)$. Otherwise, if it eventually becomes one of the vertical green pipes, then $\sigma(t) > \sigma(a)$. Thus, $\sigma t_{ca} \gtrdot\sigma$. The new pipe $a$ in the RHS crosses some vertical green pipe before reaching $(a,m)$ and after this, it cannot cross these vertical green pipes again since pipes do not move right, and after $(a,m)$, it took the path of the old pipe $c$ which will not cross any pipes twice. By similar reasoning, the new pipe $c$ (which ends up in column $m$) also does not cross any other pipe twice, so the QBPD on the RHS is a reduced QBPD for $\sigma t_{ca}$. 
        
        We show that this a bijection by showing the reverse operation: given a QBPD of $\tau = \sigma t_{ca}\gtrdot \sigma$ for some $c < a$, we do the reverse change from the RHS to the LHS (this is well defined by Proposition ~\ref{prop:sigcagtr}). Note that, for a QBPD of $\tau$, the pipe ending at column $m$ starts at row $c$, and no pipe starting at row $t < a$ and ending at column $\tau(t) < m$ simultaneously crosses pipes $a$ and $c$ since otherwise we have $a < t < c$ and $\tau(a) < \tau(t) < \tau(c)$, contradicting $\tau = \sigma t_{ca}\gtrdot \sigma$. So, doing the reverse change will not make these pipes intersect with pipe $c$ twice. Any pipe that starts at row $t > a$ or ends at column $\tau(t) > m$ is a green pipe in the configuration and doing the reverse change from RHS to LHS cannot make any green pipe intersect with pipe $c$ twice. Thus, doing the reverse change from RHS to LHS gives a reduced QBPD for $\pi$ since all the changed pipes do not cross any other pipe twice. The LHS and the RHS have the same set of tiles that contribute weight, so the weights are the same.
        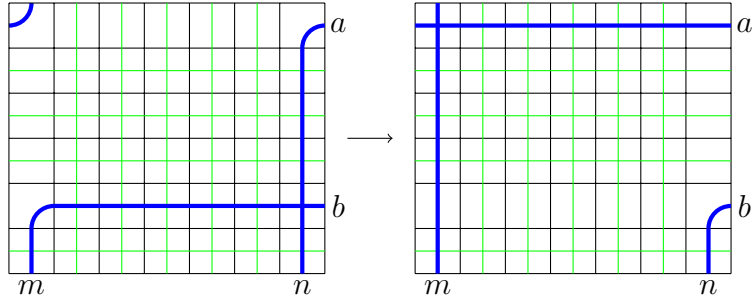
\begin{figure}[h!]
            \centering
            \begin{tikzpicture}[scale=\figscale]
                \node at (0.5, -0.3) {$m$};
            \node at (6.5, -0.3) {$n$};
            \node at (7.3, 5.5) {$a$};
            \node at (7.3, 1.5) {$b$};
            \draw[blue, ultra thick] (0.5, 6) arc (0:-90:0.5);
            \draw (0, 0) grid (7,6);
            \foreach \i in {1,2,3}
                {\draw[green] (\i.5, 0)--(\i.5, 6);
                \draw[green] (0, 1+\i.5)--(7,1+\i.5);
                } 
            \draw[green] (0, 0.5)--(7, 0.5);
            \draw[green] (5.5, 0)--(5.5, 6);
            \draw[green] (4.5, 0)--(4.5, 6);
            \draw[blue, ultra thick] (1, 1.5)--(7, 1.5);
            \draw [blue, ultra thick](1,1.5) arc (90:180:0.5);
            \draw[blue, ultra thick] (0.5, 0)--(0.5, 1);
            \draw[blue, ultra thick] (6.5, 0)--(6.5, 5);
            \draw[blue, ultra thick] (6.5, 5) arc (180:90:0.5);
            \draw[->] (7.5, 3)--(8.5, 3);
                \begin{scope}[shift={(9,0)}]
                    
                    \node at (0.5, -0.3) {$m$};
                    \node at (6.5, -0.3) {$n$};
                    \node at (7.3, 5.5) {$a$};
                    \node at (7.3, 1.5) {$b$};
                    \draw (0, 0) grid (7,6);
                    \foreach \i in {1,2,3}
                        {\draw[green] (\i.5, 0)--(\i.5, 6);
                        \draw[green] (0, 1+\i.5)--(7,1+\i.5);
                        } 
                    \draw[green] (0, 0.5)--(7, 0.5);   
                    \draw[green] (5.5, 0)--(5.5, 6);
                    \draw[green] (4.5, 0)--(4.5, 6);
                    \draw[blue, ultra thick] (0, 5.5)--(7, 5.5);
                    \draw[blue, ultra thick] (0.5, 0)--(0.5, 6);
                    \draw[blue, ultra thick] (6.5, 0)--(6.5, 1);
                    \draw[blue, ultra thick] (6.5, 1) arc (180:90:0.5);
                    
                \end{scope}
            \end{tikzpicture}
            \caption{The bijection $\phi_B$}
            \label{fig:phib}
        \end{figure}
    \end{proof}
\end{lemma}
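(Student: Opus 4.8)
The plan is to define $\phi_B$ by the single local modification pictured in Figure \ref{fig:phib}, carried out entirely inside the window $[a,n]\times[m,n]$, and then to verify in turn that it lands in the asserted set, preserves $\bwt$, and is invertible. Recall from Proposition \ref{prop:configpi} that in configuration $B$ the window contains an elbow at $(a,m)$ fed from above through column $m$; I would first name the pipe carrying this elbow, say pipe $c$, and read off from the picture that $c<a$. The move then reroutes pipe $a$ to run straight left along row $a$ (exiting the window toward a column $\sigma(c)<m$), sends pipe $c$ straight down column $m$ so that it crosses pipe $a$ at $(a,m)$, and drops pipe $b$ straight down column $n$; reading off the three endpoints shows the output represents $\sigma t_{ca}$ and matches Figure \ref{fig:cfgsigca}.

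Next I would establish that $\sigma t_{ca}\gtrdot\sigma$, so that the codomain is exactly the one in the statement. This is a local reading of heights: since $\sigma(t)=\pi(t)$ for $c<t<a$, each such pipe $t$ must, inside the window, either pass through column $m$ above the $(a,m)$-elbow, ending up above pipe $c$ and so forcing $\sigma(t)<\sigma(c)$, or continue to the bottom as one of the straight green pipes, forcing $\sigma(t)>\sigma(a)=m$. Together with $\sigma(c)<m$, Remark \ref{cond} then yields precisely $\sigma t_{ca}\gtrdot\sigma$. Weight preservation I expect to be immediate: the window is fully covered by pipes on both sides, and the move neither creates nor destroys empty cells, dominoes, southwest elbows, or upward-moving strands, so the sets $E$, $Q$, and $NQ$ of Definition \ref{def:bwt} are unchanged and $\bwt(\phi_B(P))=\bwt(P)$.

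For bijectivity the plan is to exhibit the inverse explicitly. Given any QBPD of $\sigma t_{ca}$ with $c<a$ and $\sigma t_{ca}\gtrdot\sigma$, Proposition \ref{prop:sigcagtr} guarantees that its restriction to the window is exactly the picture of Figure \ref{fig:cfgsigca}, so the reverse local move is well-defined; I would then check that performing it returns a QBPD of $\pi$ in configuration $B$ and that the two assignments are mutually inverse. The one substantive point is reducedness of the reverse output, for which I would use the hypothesis $\sigma t_{ca}\gtrdot\sigma$ to rule out any intermediate pipe, starting strictly between rows $c$ and $a$, that crosses both rerouted strands, since such a pipe is exactly what a double crossing would demand.

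The main obstacle is the reducedness verification in both directions: I must ensure the local reroute never makes two pipes cross twice. The clean way to handle this is a parity and locality argument. All altered strands lie inside the window, and each rerouted pipe meets any other pipe at most once there, so any hypothetical second crossing would have to occur outside the window and would therefore already be present before the move; since the move preserves the parity of the number of crossings of each fixed pair of pipes, a double crossing after the move forces a double crossing before it, contradicting reducedness of the source. Getting this argument to apply cleanly is precisely where the structural inputs---configuration $B$, Proposition \ref{prop:sigcagtr}, and the descent condition $\sigma t_{ca}\gtrdot\sigma$ controlling the intermediate pipes---are doing the essential work.
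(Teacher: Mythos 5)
Your setup, the identification of the codomain (the argument that the elbow's pipe $c$ satisfies $\sigma t_{ca}\gtrdot\sigma$), the weight computation, and the use of Proposition~\ref{prop:sigcagtr} to make the inverse well defined all match the paper's proof. The genuine gap is in the reducedness verification, which you yourself flag as the main obstacle: your ``clean'' argument rests on the claim that \emph{the move preserves the parity of the number of crossings of each fixed pair of pipes}, and this is false for $\phi_B$. The parity of the crossing number of two right-to-bottom pipes is an invariant of their endpoints alone (two such pipes cross an odd number of times exactly when their endpoints interleave), so parity can only be preserved by moves that fix the permutation --- droops and lifts, which is where the paper legitimately uses this argument (Lemma~\ref{lem:droopliftgen}). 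Here $\phi_B$ changes the permutation from $\pi$ to $\sigma t_{ca}$: pipes $a$ and $c$ cross zero times in the source but once, at $(a,m)$, in the image; pipes $a$ and $b$ cross once, at $(b,n)$, in the source and zero times in the image; and the pairs $(a,g)$, $(b,g)$ for each vertical green pipe $g$ likewise change parity. Moreover, even granting parity preservation, ``two crossings after the move'' would only force an \emph{even} number (possibly zero) of crossings before it, which is no contradiction.

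A second, related problem is the locality claim that ``all altered strands lie inside the window,'' so any outside crossing ``would already be present before the move.'' The changed \emph{tiles} are inside the window, but segments outside the window change ownership: the leftward path exiting the window at row $a$ belongs to pipe $c$ before the move and to pipe $a$ after it. An outside crossing tile that survives the move therefore belongs to a \emph{different pair} of pipes afterwards, so its presence in the reduced source yields no contradiction for the image. The paper's proof avoids both issues by tracing each changed pipe explicitly and invoking column monotonicity (pipes never move right): inside the window the new pipe $a$ meets only the vertical green pipes and pipe $c$, whose entire paths lie in columns $\geq m$, while outside the window it follows the old pipe $c$'s path, which lies entirely in columns $<m$; hence the sets of pipes it can meet inside and outside are disjoint, and no pair is crossed twice. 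The same bookkeeping, together with the condition $\sigma t_{ca}\gtrdot\sigma$ controlling pipes between rows $c$ and $a$ (an ingredient you correctly anticipated), carries the reverse direction. Your proof needs this replacement; as written, the key step does not go through.
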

\begin{lemma}\label{lem:phic}
    In Setup \ref{su}, there is a bijection $\phi_C\colon \QBPD(\pi, C)\to \bigcup\limits_{\substack{c > a\\\sigma t_{ac} \lhd \sigma}}\QBPD(\sigma t_{ac})$ such that $-q_{ac}\bwt(\phi_C(P)) = \bwt(P)$ for all $P\in \QBPD(\pi, C)$. 
    \begin{proof}
        
        \begin{figure}[h!]
            \centering
            \begin{tikzpicture}[scale=\figscale]
                \node at (0.5, -0.3) {$m$};
            \node at (6.5, -0.3) {$n$};
            \node at (7.3, 5.5) {$a$};
            \node at (7.3, 1.5) {$b$};
            \node at (7.3, 3.5) {$c$};
            
            \draw (0, 0) grid (7,6);
            \draw[white] (1,7) --(2,7);
            \foreach \i in {1,2,3}
                {\draw[green] (\i.5, 0)--(\i.5, 6);
                \draw[green] (0, 1+\i.5)--(7,1+\i.5);
                } 
            \draw[green] (0, 0.5)--(7, 0.5);  
            \draw[green] (5.5, 0)--(5.5, 6);
            \draw[green] (4.5, 0)--(4.5, 6);
            \draw [blue, ultra thick] (0.5,4)--(0.5,5);
            \draw [blue, ultra thick] (1,3.5)--(7,3.5);
            \draw[blue, ultra thick] (1, 1.5)--(7, 1.5);
            \draw[fill=white] (0,3) rectangle (1,4);
            \draw[blue] (0.5, 5) arc (0:90:0.5);
            \draw[blue] (0.5, 5)--(0.5,6);
            \draw[blue, ultra thick] (0.5, 4) arc (180:270:0.5);
            \draw [blue, ultra thick](1,1.5) arc (90:180:0.5);
            \draw[blue, ultra thick] (0.5, 0)--(0.5, 1);
            \draw[blue, ultra thick] (6.5, 0)--(6.5, 5);
            \draw[blue, ultra thick] (6.5, 5) arc (180:90:0.5);
            \draw[->] (7.5, 3)--(8.5, 3);
                \begin{scope}[shift={(9,0)}]
                    
                    \node at (0.5, -0.3) {$m$};
            \node at (6.5, -0.3) {$n$};
            \node at (7.3, 5.5) {$a$};
            \node at (7.3, 1.5) {$b$};
            \node at (7.3, 3.5) {$c$};
            
            \draw (0, 0) grid (7,6);
            \draw[white] (1,7) --(2,7);
            \foreach \i in {1,2,3}
                {\draw[green] (\i.5, 0)--(\i.5, 6);
                \draw[green] (0, 1+\i.5)--(7,1+\i.5);
                } 
            \draw[green] (0, 0.5)--(7, 0.5);  
            \draw[green] (5.5, 0)--(5.5, 6);
            \draw[green] (4.5, 0)--(4.5, 6);
            \draw [blue, ultra thick] (1,3.5)--(7,3.5);
            \draw[blue, ultra thick] (1, 5.5)--(7, 5.5);
            \draw[fill=white] (0,3) rectangle (1,4);
            \draw[blue] (0.5, 6) arc (180:270:0.5);
            \draw[blue] (0, 5.5)--(1,5.5);
            \draw[blue, ultra thick] (0.5, 3) arc (180:90:0.5);
            \draw[blue, ultra thick] (0.5, 0)--(0.5, 3);
            \draw[blue, ultra thick] (6.5, 0)--(6.5, 1);
            \draw[blue, ultra thick] (6.5, 1) arc (180:90:0.5);
                    
                \end{scope}
            \end{tikzpicture}
            \caption{The bijection $\phi_C$}
            \label{fig:phic}
        \end{figure}
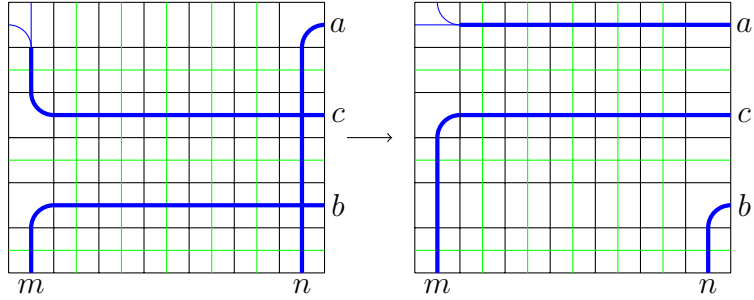
        The bijection $\phi_C$ is formed by making the change in Figure \ref{fig:phic}. When we make the change from the LHS to the RHS, note that the new pipe $a$ (pipe $a$ on the RHS) crosses the same pipes as old pipe $c$ in the same column, except for pipe $t$ with $a < t < c$. Thus, if the QBPD on the LHS is reduced, then the one on the RHS is reduced since all changed pipes cannot intersect with any other pipe twice. For the inverse map, given a reduced QBPD of $\tau =\sigma t_{ac} \lhd \sigma$, we have for all $a < t < c$, $\tau(a) < \tau(t) < \tau(c)$ by Remark \ref{cond}, so pipe $c$ and pipe $a$ do not intersect with pipe $t$. Then, doing the reverse change does not make the new pipe $c$ intersect with any other pipe twice, and the new pipe $a$ and $b$ also do not intersect with any other pipe twice, so doing the reverse change gives a reduced QBPD for $\pi$. The LHS has a \begin{tikzpicture}[scale=\inlinescale]
\draw (0, 0) grid (1,1);
\draw[blue](0.5,0) arc (0:90:0.5);
\end{tikzpicture} or \begin{tikzpicture}[scale=\inlinescale]
\draw (0, 0) grid (1,1);
\draw[blue](0.5,0)--(0.5,1);
\end{tikzpicture} in which the pipe moves upward at $(a, m)$ contributing $-q_a$ and, for $a < t <c$, there is a \begin{tikzpicture}[scale=\inlinescale]
\draw (0, 0) grid (1,1);
\draw[blue](0.5,0)--(0.5,1);
\draw[blue](0,0.5)--(1,0.5);
\end{tikzpicture} tile at $(t, m)$ in which the vertical pipe moves upward, which contributes $q_t$. These tiles are not present in the RHS. Thus, the weight is multiplied by $-q_{ac}$.
    \end{proof}
\end{lemma}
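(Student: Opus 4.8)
The plan is to build $\phi_C$ as a surgery confined to the window $[a,n]\times[m,n]$, exactly as drawn in Figure \ref{fig:phic}, and to read the target index off the input. By Proposition \ref{prop:configpi} a QBPD in $\QBPD(\pi,C)$ is recorded by the single row $c$ with $a<c<b$ at which the horizontal strand on column $m$ was lifted, creating the upward detour that climbs column $m$ from row $c$ to row $a$. I would declare $\phi_C(P)$ to be the QBPD for $\sigma t_{ac}$ obtained by the replacement in Figure \ref{fig:phic}, so that the lift row $c$ of $P$ becomes the transposition index of the target; Remark \ref{cond} is what certifies that the valid lift rows are precisely the $c>a$ with $\sigma t_{ac}\lhd\sigma$, matching the index set of the target union. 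Before checking anything it helps to record the exit columns of the three affected pipes: in $\pi$ one has $\pi(a)=n$, $\pi(b)=m$, $\pi(c)=\sigma(c)$, while in $\tau:=\sigma t_{ac}$ one has $\tau(a)=\sigma(c)$, $\tau(b)=n$, $\tau(c)=m$. Thus the surgery simultaneously reroutes all three strands: the detouring strand entering on row $c$ is straightened into the strand on row $a$ of $\tau$, the strand leaving on column $m$ is handed from pipe $b$ to pipe $c$, and the strand leaving on column $n$ is handed from pipe $a$ to pipe $b$.

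I would then verify that the output is a reduced QBPD for $\tau$. The efficient comparison is between the new pipe entering on row $a$ and the old detouring pipe entering on row $c$: after straightening, the new pipe $a$ meets exactly the same strands, in the same columns, as the old detour, except that it no longer meets the strands $t$ with $a<t<c$ that the detour used to cross while climbing column $m$. Since reducedness held for $P$ and the surgery only deletes crossings, no pair can cross twice afterwards, and the rerouted pipes $b$ and $c$ are checked the same way. Here Remark \ref{cond} does the organizing work: $\sigma t_{ac}\lhd\sigma$ forces $m=\sigma(a)>\sigma(t)>\sigma(c)$ for every $a<t<c$, which is exactly what lets the detour exist and what keeps the straightened picture consistent.

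For bijectivity I would exhibit the inverse directly. Starting from an arbitrary QBPD of $\tau=\sigma t_{ac}$, whose restriction to $[a,n]\times[m,n]$ is the configuration of Figure \ref{fig:sigac}, I apply the reverse replacement of Figure \ref{fig:phic}. The inequality $\tau(a)<\tau(t)<\tau(c)$ for $a<t<c$ (again Remark \ref{cond}) shows that the strands $t$ lie strictly between pipes $a$ and $c$, so reintroducing the detour up column $m$ cannot force any pair to cross twice; the result is a reduced QBPD of $\pi$ in configuration $C$ with lift row $c$, manifestly inverting $\phi_C$. Finally, the weight comparison is local, since the only tiles that change sit on column $m$ inside the detour. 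On the left there is a southwest elbow or an upward vertical at $(a,m)$ contributing $-q_a$, and for each $a<t<c$ a crossing at $(t,m)$ whose vertical strand points upward contributing $q_t$, so their product is $-q_aq_{a+1}\cdots q_{c-1}=-q_{ac}$; none of these survive on the right and every other tile is unchanged, which gives $\bwt(P)=-q_{ac}\,\bwt(\phi_C(P))$.

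The step I expect to be the real obstacle is the reducedness check carried out honestly for all three rerouted pipes at once, rather than only for the single highlighted detour strand: rerouting pipe $a$, pipe $b$, and pipe $c$ simultaneously could a priori create a second crossing between some pair not visible in the schematic. The safeguard is the two-sided control from $\sigma t_{ac}\lhd\sigma$ via Remark \ref{cond}, together with the parity-of-crossings principle already used in the proof of Lemma \ref{lem:droopliftgen}; upgrading the one-pipe comparison of Figure \ref{fig:phic} into a uniform statement about every pair of pipes is where I would concentrate the argument.
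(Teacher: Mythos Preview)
Your proposal is correct and follows essentially the same route as the paper: define $\phi_C$ by the local surgery of Figure~\ref{fig:phic}, verify reducedness in both directions by comparing which pipes the rerouted strands meet (invoking Remark~\ref{cond} for the inverse), and read off the weight factor $-q_{ac}$ from the tiles on column $m$ between rows $a$ and $c$. Your extra bookkeeping of the exit columns and your explicit flag that all three rerouted pipes must be checked simultaneously are welcome clarifications, but they do not change the underlying argument, which matches the paper's.
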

\begin{lemma}\label{lem:phid}
    In Setup \ref{su}, there is a bijection \begin{equation}\phi_D\colon \QBPD(\pi, D)\to \QBPD(\sigma t_{a-1,a},X_1)_{\sigma t_{a-1,a}\lhd \sigma}\end{equation} such that $q_{a-1}\bwt(\phi_D(P)) = \bwt(P)$ for all $P\in \QBPD(\pi, D)$. Here, the notation \\$\QBPD(\sigma t_{a-1,a},X_1)_{\sigma t_{a-1,a}\lhd \sigma}$ means the set $\QBPD(\sigma t_{a-1,a}, X_1)$ if $\sigma t_{a-1,a}\lhd \sigma$ and the empty set otherwise. (If the set is empty, this will be the trivial bijection between two empty sets.) 
    \begin{proof}
        
        \begin{figure}[h!]
            \centering
            \begin{tikzpicture}[scale=\figscale]
                \node at (0.5, -0.3) {$m$};
            \node at (6.5, -0.3) {$n$};
            \node at (2.5, -0.3) {$p_1$};
            \node at (7.3, 5.5) {$a$};
            \node at (7.8, 6.5) {$a-1$};
            \node at (7.3, 1.5) {$b$};
            \draw (0, 0) grid (7,7);
            \draw[fill=white] (0,5) rectangle (1,7);
            \foreach \i in {1,2,3}
                {\draw[green] (2+\i.5, 0)--(2+\i.5, 7);
                \draw[green] (0, 1+\i.5)--(7,1+\i.5);
                } 
            \draw[green] (0, 0.5)--(7, 0.5);  
            \draw[green] (1.5, 0)--(1.5, 7);
            \draw[blue, ultra thick] (2.5, 0)--(2.5, 6);
            \draw[blue, ultra thick] (3, 6.5)--(7, 6.5);
            \draw[blue, ultra thick] (3, 6.5) arc (90:180:0.5);
            \draw[blue, ultra thick] (1, 1.5)--(7, 1.5);
            \draw [blue, ultra thick](1,1.5) arc (90:180:0.5);
            \draw[blue, ultra thick] (0.5, 0)--(0.5, 1);
            \draw[blue, ultra thick] (6.5, 0)--(6.5, 5);
            \draw[blue, ultra thick] (6.5, 5) arc (180:90:0.5);
            \draw[->] (7.5, 3)--(8.5, 3);
                \begin{scope}[shift={(9,0)}]
                    \node at (0.5, -0.3) {$m$};
                    \node at (6.5, -0.3) {$n$};
                    \node at (2.5, -0.3) {$p_1$};
                    \node at (7.3, 5.5) {$a$};
                    \node at (7.8, 6.5) {$a-1$};
                    \node at (7.3, 1.5) {$b$};
                    \draw (0, 0) grid (7,7);
                    \foreach \i in {1,2,3}
                        {\draw[green] (2+\i.5, 0)--(2+\i.5, 7);
                        \draw[green] (0, 1+\i.5)--(7,1+\i.5);
                        } 
                    \draw[green] (0, 0.5)--(7, 0.5);   
                    \draw[green] (1.5, 0)--(1.5, 7);
                    \draw[blue, ultra thick] (1, 6.5)--(7, 6.5);
                    \draw [blue, ultra thick](1,6.5) arc (90:180:0.5);
                    \draw[blue, ultra thick] (0.5, 0)--(0.5, 6);
                    \draw[blue, ultra thick] (2.5, 0)--(2.5, 5);
                    \draw[blue, ultra thick] (3, 5.5)--(7, 5.5);
                    \draw[blue, ultra thick] (3, 5.5) arc(90:180:0.5);
                    \draw[blue, ultra thick] (6.5, 0)--(6.5, 1);
                    \draw[blue, ultra thick] (6.5, 1) arc (180:90:0.5);
                \end{scope}
            \end{tikzpicture}
            \caption{The bijection $\phi_D$}
            \label{fig:phid}
        \end{figure}
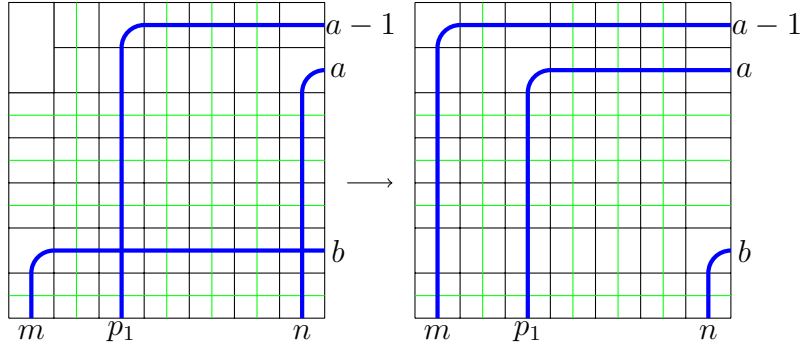
        The bijection $\phi_D$ is formed by making the change in Figure \ref{fig:phid}. We first show that if there exists a QBPD of configuration $D$, as in Figure \ref{fig:configpi}, then we can deduce that it has the configuration as in the LHS of Figure \ref{fig:phid} and that $\sigma{t_{a-1, a}}\lhd \sigma$. Let $(a-1, p)$ be the rightmost cell in row $a-1$ that is not 
        \begin{tikzpicture}[scale=\inlinescale]
            \draw (0, 0) rectangle (1,1);
            \draw[blue] (0, 0.5)--(1,0.5);
        \end{tikzpicture} 
        or 
        \begin{tikzpicture}[scale=\inlinescale]
            \draw (0, 0) rectangle (1,1);
            \draw[blue] (0, 0.5)--(1,0.5);
            \draw[blue] (0.5, 0)--(0.5,1);
        \end{tikzpicture}. Note that $p < n$ since, at the rightmost column, pipe $a-1$ cannot move up (Remark \ref{uplast}) or down (due to the elbow right below), and $p > m$ since $(a-1, m)$ is occupied by a domino. Then, for $p < t < n$, the pipe $a-1$ moves horizontally through $(a-1, t)$ and the green pipes coming from below must cross with it, so these tiles must be \begin{tikzpicture}[scale=\inlinescale]
            \draw (0, 0) rectangle (1,1);
            \draw[blue] (0, 0.5)--(1,0.5);
            \draw[blue] (0.5, 0)--(0.5,1);
        \end{tikzpicture} tiles. Now, at $(a-1, p)$, we must have the \begin{tikzpicture}[scale=\inlinescale]
        \draw (0, 0) grid (1,1);
        \draw[blue](1,0.5) arc (90:180:0.5);
        \end{tikzpicture} elbow since it is not 
        \begin{tikzpicture}[scale=\inlinescale]
            \draw (0, 0) rectangle (1,1);
            \draw[blue] (0, 0.5)--(1,0.5);
            \draw[blue] (0.5, 0)--(0.5,1);
        \end{tikzpicture} and $(a,p)$ is a 
        \begin{tikzpicture}[scale=\inlinescale]
            \draw (0, 0) rectangle (1,1);
            
            \draw[blue] (0.5, 0)--(0.5,1);
        \end{tikzpicture} tile. For $m < t < p$, the green pipes below must pass vertically through $(a-1,t)$ since the rightmost pipe that turns with a \begin{tikzpicture}[scale=\inlinescale]
        \draw (0, 0) grid (1,1);
        \draw[blue](1,0.5) arc (90:180:0.5);
        \end{tikzpicture} elbow would meet the elbow at $(a-1, p)$. (Note that turning with \begin{tikzpicture}[scale=\inlinescale]
        \draw (0, 0) grid (1,1);
        \draw[blue](0,0.5) arc (90:0:0.5);
        \end{tikzpicture} elbow is not possible since the pipe would be moving rightward---recall that pipes move from the right edge to the bottom edge.)
        Thus, we have the configuration as in the LHS.
        
        Note that this means $p = p_1 = \sigma(a-1) > m = \sigma(a)$, so $\sigma t_{a-1, a}\lhd \sigma$. Thus, if $\sigma t_{a-1, a}\ntriangleleft \sigma$, this means there are no QBPDs of $\pi$ with configuration $D$, so the bijection would just be the trivial bijection between two empty sets.
        
        Now, consider the case when there exists QBPDs of $\pi$ with configuration $D$. In this case, the changed pipes all lie entirely in the rectangle, so we can see directly from Figure \ref{fig:phid} that starting with a reduced QBPD on the left results in a reduced QBPD on the right and vice versa. Note that the LHS has an extra domino compared to the RHS, so we have $q_{a-1}\bwt(\phi_D(P)) = \bwt(P)$ for all QPBDs $P\in \QBPD(\pi, D)$.
    \end{proof}
\end{lemma}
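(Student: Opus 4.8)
The plan is to define $\phi_D$ as the purely local tile replacement shown in Figure~\ref{fig:phid}, after first pinning down the exact shape that Configuration~$D$ is forced to take. I would organize the argument in two stages: identifying the domain together with the descent condition, and then checking that the displayed swap is a reducedness-preserving, weight-tracking bijection.

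Stage one is to analyze Configuration~$D$. By Proposition~\ref{prop:configpi}, a QBPD of $\pi$ in Configuration~$D$ is Configuration~$A$ with the unique empty cell $(a,m)$ of the window $[a,n]\times[m,n]$ absorbed into a domino. A domino occupies two vertically adjacent empty cells, so the partner of $(a,m)$ is $(a+1,m)$ or $(a-1,m)$. In Configuration~$A$ the cell $(a+1,m)$ carries a pipe---it is the elbow of pipe $b$ if $b=a+1$, and otherwise a segment of pipe $a+1$, since $\pi(a+1)<m$---so the partner must be $(a-1,m)$, forcing that cell to be empty. Now $(a-1,m)$ is empty precisely when $\pi(a-1)>m$: in Configuration~$A$ no pipe has a vertical strand in column $m$ above row $b$, so $(a-1,m)$ is occupied exactly when pipe $a-1$ passes through it horizontally, which happens iff pipe $a-1$ turns at a column $\le m$, i.e.\ iff $\pi(a-1)<m$. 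Since $\pi(a-1)=\sigma(a-1)$, emptiness of $(a-1,m)$ is equivalent to $\sigma(a-1)>\sigma(a)=m$, that is, $\sigma t_{a-1,a}\lhd\sigma$; by Proposition~\ref{ni}(a) this is exactly $a-1=a_1\in S$, and then $\sigma(a-1)=p_1$. If instead $\sigma t_{a-1,a}\not\lhd\sigma$, no domino can be placed, so $\QBPD(\pi,D)=\varnothing$ while the target set is empty as well, making $\phi_D$ the trivial bijection.

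Assuming $\sigma t_{a-1,a}\lhd\sigma$, I would then determine row $a-1$ completely. By Remark~\ref{uplast} pipe $a-1$ cannot move up in the rightmost column, and it cannot move down there because of the elbow at $(a,n)$, so pipe $a-1$ enters row $a-1$ horizontally; tracing it leftward it crosses the vertical pipes in the columns strictly between $m$ and $p_1$ and turns downward at a northwest elbow in column $p_1=\sigma(a-1)$. This is exactly the left-hand picture of Figure~\ref{fig:phid}, so $\QBPD(\pi,D)$ is identified with that configuration, while the target $\QBPD(\sigma t_{a-1,a},X_1)$ is the right-hand picture (Figure~\ref{fig:cfgxy}). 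The map $\phi_D$ swaps these two local pictures, and its inverse is the reverse swap, well defined because Figure~\ref{fig:cfgxy} already classifies the QBPDs of $\sigma t_{a-1,a}$ in this window.

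Stage two is to verify the bijection and the weight relation. The only rerouted pipes are $a-1$, $a$, and $b$, and they are confined to the displayed region, so, just as in Lemma~\ref{lem:phia}, one reads off from Figure~\ref{fig:phid} that a reduced QBPD on either side is sent to a reduced QBPD on the other, since no pair of pipes can be made to cross twice by a change supported in this region. For the weight, neither picture has an empty cell inside the window, and every tile in the changed region other than the domino is an elbow, a crossing, or a straight segment with no upward motion; the single weight-bearing tile is the domino with upper cell $(a-1,m)$ on the left, which contributes $q_{a-1}$. Hence $\bwt(P)=q_{a-1}\,\bwt(\phi_D(P))$. The main obstacle is Stage one: correctly forcing the domino to occupy $\{(a-1,m),(a,m)\}$ rather than $\{(a,m),(a+1,m)\}$ and extracting the equivalence with the descent $\sigma t_{a-1,a}\lhd\sigma$ together with the precise shape of row $a-1$, since once the two local pictures are identified both the bijectivity and the weight bookkeeping are routine.
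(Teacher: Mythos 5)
Your proof has the same skeleton as the paper's: pin down the exact local picture forced by Configuration $D$ together with the condition $\sigma t_{a-1,a}\lhd\sigma$, then let $\phi_D$ be the local swap of Figure~\ref{fig:phid}, quoting the classification behind Figure~\ref{fig:cfgxy} so the inverse is well defined, and charging the factor $q_{a-1}$ to the deleted domino. Your explicit dichotomy for the domino's partner cell (it must be $(a-1,m)$, since $(a+1,m)$ is occupied in Configuration $A$) is a step the paper leaves implicit, and your Stage two agrees with the paper. But Stage one, which you rightly identify as the crux, has a genuine gap.

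The equivalence you build Stage one on---``$(a-1,m)$ is empty precisely when $\pi(a-1)>m$''---is false, and the argument offered for it does not prove even the direction the lemma needs. It is true that $(a-1,m)$ cannot carry a vertical strand or a crossing (its bottom edge borders the empty cell $(a,m)$), but it can carry an elbow of a pipe other than pipe $a-1$, one that descends column $m$ from above and turns left there. Concretely, take $\pi=1342$, so $n=4$, $a=3$, $b=4$, $m=2$, and $\pi(a-1)=\pi(2)=3>m$: the reduced QBPD in which pipe $1$ runs left along row $1$, turns down at $(1,2)$, turns left at $(2,2)$, turns down at $(2,1)$ and exits in column $1$, while pipes $2,3,4$ follow their Rothe-diagram routes, has Configuration $A$, yet $(a-1,m)=(2,2)$ is occupied. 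So occupancy of $(a-1,m)$ is not governed by the sign of $\pi(a-1)-m$, and your inference ``no vertical strand, hence occupied only if pipe $a-1$ passes through horizontally'' is exactly what this example refutes.

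What the lemma actually needs, and what your write-up asserts rather than proves, is the forcing statement: in a Configuration-$D$ QBPD, pipe $a-1$ cannot leave row $a-1$ until it turns down with a northwest elbow at some column $p$ with $m<p<n$, and the cells of row $a-1$ strictly between columns $m$ and $p$ are plain verticals. The missing ingredient---the heart of the paper's proof---is that for every $t$ with $m<t<n$ the green pipe ending in column $t$ crosses the bottom edge of $(a-1,t)$ moving downward, so that cell must be a vertical, a crossing, or a northwest elbow; this is what rules out any upward turn of pipe $a-1$ in row $a-1$, forces its first non-horizontal, non-crossing tile to be a northwest elbow at $p=\pi(a-1)=\sigma(a-1)$ (it cannot reach the domino at $(a-1,m)$), and hence yields $\sigma t_{a-1,a}\lhd\sigma$ and $p=p_1$. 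Your tracing also misplaces the crossings: pipe $a-1$ crosses the vertical pipes in the columns strictly between $p_1$ and $n$, whereas the columns strictly between $m$ and $p_1$ carry uncrossed vertical tiles---a distinction needed both to match the left picture of Figure~\ref{fig:phid} and to verify the swap is well defined. Once this forcing argument is supplied (the paper runs it via the rightmost cell of row $a-1$ that is neither a horizontal nor a crossing), the remainder of your proof goes through as written.
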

\begin{lemma}\label{lem:phii}
    In Setup \ref{su}, for each  $i > 1$, there is a bijection $\phi_i\colon \QBPD(\sigma t_{a_ia}, X_i)\to \QBPD(\sigma t_{a_{i-1} a}, Y_{i-1})$ such that $\bwt(\phi_i(P)) = -q_{a_ia_{i-1}}\bwt(P)$ for all $P\in \QBPD(\sigma t_{a_ia}, X_i)$. In particular, since there is no $X_{k+1}$, the set $\QBPD(\sigma t_{a_ka}, Y_k)$ is empty. 
    \begin{proof}
        
        \begin{figure}[h!]
            \centering
            \begin{tikzpicture}[scale=\figscale]
                \node at (6.5, -0.3) {$n$};
                \node at (2.5, -0.3) {$p_{i-1}$};
                \node at (4.5, -0.3) {$p_i$};
                \node at (7.32, 9.5) {$a_i$};
                \node at (7.3, 5.5) {$a$};
                \node at (7.6, 7.5) {$a_{i-1}$};
                \node at (7.3, 1.5) {$b$};
                \draw (2, 0) grid (7,8);
                \foreach \i in {1,2,3}
                    {
                    \draw[green] (2, 1+\i.5)--(7,1+\i.5);
                    } 
                \draw[green] (2, 0.5)--(7, 0.5);   
                \draw[green] (2, 6.5)--(7, 6.5);
                \draw[blue, ultra thick] (3, 7.5)--(7, 7.5);
                \draw [blue, ultra thick](3,7.5) arc (90:180:0.5);
                \draw[blue, ultra thick] (2.5, 0)--(2.5, 7);
                \draw[blue, ultra thick] (4.5, 0)--(4.5, 5);
                \draw[blue, ultra thick] (5, 5.5)--(7, 5.5);
                \draw[blue, ultra thick] (5, 5.5) arc(90:180:0.5);
                \draw[blue, ultra thick] (6.5, 0)--(6.5, 1);
                \draw[blue, ultra thick] (6.5, 1) arc (180:90:0.5);
                \draw (2,8) grid (7,10);
                \foreach \i in {0, 1}{
                    \draw[green] (3.5+\i+\i, 0)--(3.5+\i+\i, 10);
                }
                \draw[green] (2,8.5)--(7,8.5);
                \draw[blue, ultra thick] (3, 9.5)--(7, 9.5);
                \draw[blue] (2.5, 10) arc (180:270:0.5);
                \draw[blue] (2, 9.5)--(3,9.5);
            \draw[->] (7.5, 5)--(8.5, 5);
                \begin{scope}[shift={(7,0)}]
                    \node at (6.5, -0.3) {$n$};
                \node at (2.5, -0.3) {$p_{i-1}$};
                \node at (4.5, -0.3) {$p_i$};
                \node at (7.32, 9.5) {$a_i$};
                \node at (7.3, 5.5) {$a$};
                \node at (7.6, 7.5) {$a_{i-1}$};
                \node at (7.3, 1.5) {$b$};
                \draw (2, 0) grid (7,8);
                \foreach \i in {1,2,3}
                    {
                    \draw[green] (2, 1+\i.5)--(7,1+\i.5);
                    } 
                \draw[green] (2, 0.5)--(7, 0.5);   
                \draw[green] (2, 6.5)--(7, 6.5);
                \draw[blue, ultra thick] (2.5, 8)--(2.5, 9);
                \draw[blue, ultra thick] (3, 7.5)--(7, 7.5);
                \draw [blue, ultra thick](3,7.5) arc (-90:-180:0.5);
                \draw[blue, ultra thick] (2.5, 0)--(2.5, 5);
                \draw[blue, ultra thick] (4.5, 0)--(4.5, 9);
                \draw[blue, ultra thick] (3, 5.5)--(7, 5.5);
                \draw[blue, ultra thick] (3, 5.5) arc(90:180:0.5);
                \draw[blue, ultra thick] (6.5, 0)--(6.5, 1);
                \draw[blue, ultra thick] (6.5, 1) arc (180:90:0.5);
                \draw (2,8) grid (7,10);
                \foreach \i in {0, 1}{
                    \draw[green] (3.5+\i+\i, 0)--(3.5+\i+\i, 10);
                }
                \draw[green] (2,8.5)--(7,8.5);
                \draw[blue, ultra thick] (5, 9.5)--(7, 9.5);
                \draw[blue] (2.5, 9) arc (0:90:0.5);
                \draw[blue, ultra thick] (4.5, 9) arc (180:90:0.5);
                \draw[blue] (2.5, 9)--(2.5,10);
                \end{scope}
            \end{tikzpicture}
            \caption{The bijection $\phi_i$}
            \label{fig:phii}
        \end{figure}
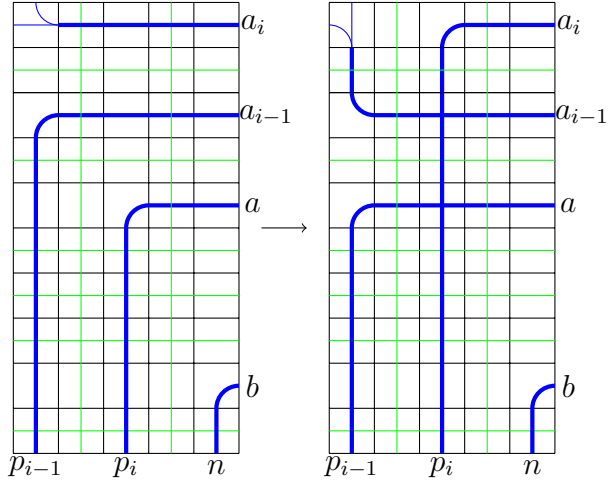
        The bijection $\phi_i$ is formed by making the change in Figure \ref{fig:phii}. We first claim that all QBPDs of $\sigma t_{a_{i-1}a}$ that have configuration 
        $Y_{i-1}$ actually have the configuration as in the RHS of Figure \ref{fig:phii}. Note that configuration $Y_{i-1}$ and the RHS of Figure \ref{fig:phii} agree on the region $[a_{i-1}, n]\times [p_{i-1}, n]$ (whether $i-1 = 1$ or $i-1 > 1$, see Figure \ref{fig:cfgfin} and Figure \ref{fig:cfgxy}). Now, for a QBPD that has configuration $Y_{i-1}$, let $(c, p_{i-1})$ be the first tile that is not a 
        \begin{tikzpicture}[scale=\inlinescale]
\draw (0, 0) grid (1,1);
\draw[blue](0.5,0)--(0.5,1);
\draw[blue](0,0.5)--(1,0.5);
\end{tikzpicture} 
when tracing the $a_{i-1}$ pipe upward from $(a_{i-1}, p_{i-1})$. Then, for the rows $j$ with $c < j < a_{i-1}$, there are pipes moving horizontally through $(j, p_{i-1})$, and for $p_{i-1} < t < n$, there are pipes moving vertically through $(a_{i-1}, t)$. All of these horizontal and vertical pipes are pairwise distinct since pipes do not move right, so they must all cross each other. So, the region $[c+1, a_{i-1}-1]\times [p_{i-1}+1, n-1]$ must be filled with \begin{tikzpicture}[scale=\inlinescale]
\draw (0, 0) grid (1,1);
\draw[blue](0.5,0)--(0.5,1);
\draw[blue](0,0.5)--(1,0.5);
\end{tikzpicture} tiles. 
This means the region $[c+1, a_{i-1}-1]\times \{n\}$ must be filled with \begin{tikzpicture}[scale=\inlinescale]
\draw (0, 0) grid (1,1);
\draw[blue](0,0.5)--(1,0.5);
\end{tikzpicture} tiles. Now, for row $c$, let $(p, c)$ be the first tile where pipe $c$ does not move horizontally. Note that $p < n$ since, at $(c, n)$, the pipe cannot move upward (Remark \ref{uplast}) and it cannot move downward since it would meet the elbow at $(b, n)$. Note that $p > p_{i-1}$ since $(c,p_{i-1})$ is not a \begin{tikzpicture}[scale=\inlinescale]
\draw (0, 0) grid (1,1);
\draw[blue](0,0.5)--(1,0.5);
\end{tikzpicture} tile and is occupied by pipe $a_{i-1}$. At $(c, p)$, there is pipe $c$ coming horizontally from the right and the tile $(c+1, p)$ right below is a  \begin{tikzpicture}[scale=\inlinescale]
\draw (0, 0) grid (1,1);
\draw[blue](0,0.5)--(1,0.5);
\draw[blue](0.5,0)--(0.5,1);
\end{tikzpicture}, so we must have an \begin{tikzpicture}[scale=\inlinescale]
            \draw (0, 0) grid (1,1);
            \draw[blue](1,0.5) arc (90:180:0.5);
        \end{tikzpicture} elbow since $(c,p)$ is not a \begin{tikzpicture}[scale=\inlinescale]
\draw (0, 0) grid (1,1);
\draw[blue](0,0.5)--(1,0.5);
\draw[blue](0.5,0)--(0.5,1);
\end{tikzpicture}. For $p < t < n$, the tile at $(c, t)$ must be a \begin{tikzpicture}[scale=\inlinescale]
\draw (0, 0) grid (1,1);
\draw[blue](0,0.5)--(1,0.5);
\draw[blue](0.5,0)--(0.5,1);
\end{tikzpicture} since there is pipe $c$ moving horizontally and there is a vertical pipe below. For $p_{i-1} < t < p$, at $(c, t)$ the green pipes below must keep going straight since the rightmost green pipe that turns with a \begin{tikzpicture}[scale=\inlinescale]
            \draw (0, 0) grid (1,1);
            \draw[blue](1,0.5) arc (90:180:0.5);
        \end{tikzpicture} elbow would run into the elbow at $(c, p)$. (Note that turning with \begin{tikzpicture}[scale=\inlinescale]
        \draw (0, 0) grid (1,1);
        \draw[blue](0,0.5) arc (90:0:0.5);
        \end{tikzpicture} elbow is not possible since the pipe would be moving right.)
        Now, let $\tau = \sigma t_{a_{i-1}a}$. Then, since pipe $a_{i-1}$ crosses pipe $t$ at $(t, p_{i-1})$ for $c < t < a_{i-1}$, we have \begin{equation}m = \sigma(a) = \tau(a_{i-1}) < \tau(t)  = \sigma(t)< p_{i-1} = \sigma(a_{i-1})\,.\end{equation}
        For all $a_{i-1} < t < a$, we have $\sigma(a_{i-1})> \sigma(t) > \sigma(a)$ by Remark \ref{cond} and we proved above that, for all $c < t < a_{i-1}$, we have $\sigma(a_{i-1}) > \sigma(t) > \sigma(a)$. Furthermore, we have $\sigma(c) > \sigma(a_{i-1}) > \sigma(a)$. Thus, $\sigma(c) > \sigma(t) > \sigma(a)$ for all $c < t < a$, so $c\in S$. Since $c < a_{i-1}$, then $c = a_j$ for some $j > i-1$. In particular, $a_i$ exists, implying that $i-1 < k$ (recall that $S = \{a_1 > a_2 > \dots > a_k\}$). As a result, the set of QBPDs of $\sigma t_{a_ka}$ with configuration $Y_k$ is empty. For $c <t < a_{i-1}$, since $\sigma(t) < \sigma(a_{i-1})$, we have $t\notin S$ and $c\in S$, so we have $c = a_{i}$. Thus, $p = \tau(a_{i}) = p_i$. Thus, we have the configuration as in the RHS.
        
        Now, if we start with a QBPD on the LHS and make the changes in Figure \ref{fig:phii}, the changed pipes on the RHS are all in the region (and thus can be seen to not intersect any other pipe twice) except the new $a_{i-1}$ pipe. This new $a_{i-1}$ pipe intersects with some vertical green pipes, but it will not intersect with them again since pipes do not move right, and then it intersects with pipe $t$ for $a_{i} < t < a_{i-1}$. After this, it takes on the path of the old $a_i$ pipe, so it remains to check that this old $a_i$ pipe does not intersect with pipe $t$ for $a_{i} < t < a_{i-1}$. Let $w = \sigma t_{a_ia}$. We have $w(a_i) = \sigma(a)  < \sigma(k)$ for all $a_i < t < a$ since $\sigma t_{a_ia}\lhd \sigma$, so we have that old pipe $a_i$ does not intersect pipe $t$ for $a_{i} < k < a_{i-1}$.
        
        If we start with a QBPD on the RHS and make the reverse changes, the only changed pipe that could possibly intersect another pipe twice is the new pipe $a_i$. However, this pipe first intersects some green vertical pipes and never intersects them again since pipes do not move right. After this, it takes on the path of the old $a_{i-1}$ pipe, which does not intersect any other pipe twice.  The RHS has a \begin{tikzpicture}[scale=\inlinescale]
\draw (0, 0) grid (1,1);
\draw[blue](0.5,0) arc (0:90:0.5);
\end{tikzpicture} or \begin{tikzpicture}[scale=\inlinescale]
\draw (0, 0) grid (1,1);
\draw[blue](0.5,0)--(0.5,1);
\end{tikzpicture} in which the pipe moves upward at $(a_i, p_{i-1})$, which contributes $-q_{a_i}$ and for $a_i < t <a_{i-1}$, there is a \begin{tikzpicture}[scale=\inlinescale]
\draw (0, 0) grid (1,1);
\draw[blue](0.5,0)--(0.5,1);
\draw[blue](0,0.5)--(1,0.5);
\end{tikzpicture} tile at $(t, p_{i-1})$ in which the vertical pipe moves upward which contributes $q_t$; these tiles are not present in the LHS. Thus, the weight is multiplied by $-q_{a_ia}$.
    \end{proof}
\end{lemma}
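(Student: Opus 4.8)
The plan is to define $\phi_i$ by the local modification pictured in Figure \ref{fig:phii}, confined to the rectangle $[a_i,n]\times[p_{i-1},n]$ and analogous to the moves used for $\phi_B$ and $\phi_C$. Starting from a QBPD of $\sigma t_{a_ia}$ in configuration $X_i$, the move reroutes the pipes in the top-left of this rectangle: one strand is lifted up column $p_{i-1}$, crossing each pipe $t$ with $a_i<t<a_{i-1}$, while pipe $a_i$ is lifted up column $p_i$. One checks directly that the result is a diagram of $\sigma t_{a_{i-1}a}$ in configuration $Y_{i-1}$ (when $i-1=1$, the symbol $Y_{i-1}$ means the configuration $Y_1$ of Figure \ref{fig:cfgxy}, which agrees with the right-hand picture on the relevant region).

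The crux is to characterize the target configuration precisely, since the reverse move must read the exact turning tiles off an \emph{arbitrary} QBPD in configuration $Y_{i-1}$, whereas the configuration label alone fixes only the tiling of $[a_{i-1},n]\times[p_{i-1},n]$. I would trace pipe $a_{i-1}$ upward from $(a_{i-1},p_{i-1})$ and let $(c,p_{i-1})$ be the first tile that is not a crossing. A rectangle-filling argument then applies: the horizontal pipes entering on the rows strictly between $c$ and $a_{i-1}$ must cross all the vertical pipes in the columns strictly between $p_{i-1}$ and $n$ (distinct pipes that meet must cross, since pipes never move right), forcing that band to be solid crossings with horizontal tiles in its last column. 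Tracing pipe $c$ exhibits a single turning elbow at some $(c,p)$ with $p>p_{i-1}$. The key step is to identify $c$: from the crossing data and Remark \ref{cond} I would deduce $\sigma(c)>\sigma(t)>\sigma(a)$ for all $c<t<a$, so $c\in S$; since every $t$ with $c<t<a_{i-1}$ satisfies $\sigma(t)<\sigma(a_{i-1})$ and hence lies outside $S$, the largest element of $S$ below $a_{i-1}$ is $c$ itself, giving $c=a_i$ and $p=\sigma(a_i)=p_i$. This matches the right-hand side of Figure \ref{fig:phii} exactly, and the same argument with $i-1=k$ shows no $c=a_{k+1}$ can exist, so $\QBPD(\sigma t_{a_ka},Y_k)=\varnothing$.

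With the target pinned down, both directions are reducedness checks of the same flavor as in Lemmas \ref{lem:phib} and \ref{lem:phic}. In each direction the only pipe whose global path changes is the one that takes over a neighbor's old trajectory; it first crosses some vertical green pipes (which it can never re-cross, as pipes do not move right) and thereafter follows an already crossing-free path, so no two pipes cross twice on either side. Finally I would tally the weight change: passing from the left picture to the right one introduces a southwest-elbow-or-upward-vertical tile at $(a_i,p_{i-1})$ contributing $-q_{a_i}$, together with upward-crossing tiles at $(t,p_{i-1})$ for $a_i<t<a_{i-1}$ each contributing $q_t$, with no other differences, so $\bwt$ is multiplied by $-q_{a_i}q_{a_i+1}\cdots q_{a_{i-1}-1}=-q_{a_ia_{i-1}}$.

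I expect the main obstacle to be precisely this characterization of configuration $Y_{i-1}$—the rectangle-filling argument and especially the identification $c=a_i$ through the combinatorics of $S$ from Proposition \ref{ni}; once that structural fact is established, the reducedness verification and the weight bookkeeping follow the template already set by the earlier bijections.
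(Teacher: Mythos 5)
Your proposal is correct and follows essentially the same route as the paper's own proof: the same local move from Figure \ref{fig:phii}, the same characterization of configuration $Y_{i-1}$ by tracing pipe $a_{i-1}$ upward to the first non-crossing tile $(c,p_{i-1})$, the same rectangle-filling argument, the same identification $c=a_i$ via membership in $S$ (which also yields the emptiness of $\QBPD(\sigma t_{a_ka},Y_k)$), and the same reducedness and weight checks. Your weight factor $-q_{a_i}q_{a_i+1}\cdots q_{a_{i-1}-1}=-q_{a_ia_{i-1}}$ is stated correctly, matching the lemma; the paper's closing sentence writes $-q_{a_ia}$, which is a typo.
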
 
Now we are ready to prove Theorem \ref{thm:qdform}.
\begin{proof}[Proof of Theorem \ref{thm:qdform}]
Let \begin{equation}T_w:= \sum_{P\in \QBPD(w)}\bwt(P)\end{equation}
By Proposition \ref{prop:stabform}, it suffices to show that $T_w$ satisfies \eqref{eq:transition}.
We have 
\begin{align}
   T_\pi &= \sum_{P\in \QBPD(\pi)}\bwt(P)\notag\\
    &= \sum_{P\in \QBPD(\pi, A)}\bwt(P)+\sum_{P\in \QBPD(\pi, B)}\bwt(P)+\sum_{P\in \QBPD(\pi, C)}\bwt(P)\notag\\
    &+\sum_{P\in \QBPD(\pi, D)}\bwt(P)+\sum_{i = 2}^k\sum_{P \in \QBPD(\sigma t_{a_ia}, X_i)}q_{a_{i}a}(\bwt(P)-\bwt(P))\,.
\end{align}
Applying Lemmas \ref{lem:phia}, \ref{lem:phib}, \ref{lem:phic}, \ref{lem:phid}, \ref{lem:phii} to the 5 summations, respectively, we get that
\begin{align}
    T_\pi&=(x_a-y_m) T_{\sigma} +\sum_{\substack{ c < a,\\ \sigma t_{ca}\gtrdot\sigma}} T_{\sigma t_{ca}}-\sum_{\substack{a < c,\\ \sigma t_{ac}\lhd\sigma}}q_{ac}T_{\sigma t_{ac}}+\sum_{i = 1}^k\sum_{P \in \QBPD(\sigma t_{a_ia})}q_{a_ia}\bwt(P)\notag\\
    &=(x_a-y_m) T_{\sigma} +\sum_{\substack{ c < a,\\ \sigma t_{ca}\gtrdot\sigma}} T_{\sigma t_{ca}}-\sum_{\substack{a < c,\\ \sigma t_{ac}\lhd\sigma}}q_{ac}T_{\sigma t_{ac}}+\sum_{\substack{c < a,\\ \sigma t_{ca}\lhd\sigma}}q_{ca}T_{\sigma t_{ca}},
\end{align}
as desired. 
\end{proof}

\section{Cancellation Analysis}\label{cancel}
There is no cancellation in the bumpless pipe dream formula for the double Schubert polynomials. Quantum bumpless pipe dreams provide a combinatorial formula for the monomial expansion of the quantum double Schubert polynomial, but this formula is not cancellation-free. In this section, we give some analysis of how much cancellation occurs and what kind of cancellation occurs for quantum double Schubert polynomials.

Table \ref{tab:s4cancel} lists all the permutations in $S_4$ for which the QBPD formula gives cancellation when computing the quantum double Schubert polynomials. In $S_4$, cancellations occur infrequently due to the limited number of QBPDs generated by permutations. Conversely, in $S_5$ and $S_6$, the frequency of cancellations rises alongside the number of distinct QBPDs, as cancellations manifest between these distinct QBPDs.
\begin{table}[h!]
    \centering
    \begin{tabular}{ c c c c c }
     Permutation &   Monomials & QBPD Monomials &  Cancellations & Number of QBPDs\\
     $[4,1,3,2]$ & $50$ & $54$ & $2$ & $9$\\
     $[3,1,4,2]$ & $18$ & $20$ & $1$ & $4$\\
     $[1,4,3,2]$ & $46$ & $48$ & $1$ & $9$\\
     $[2,1,4,3]$ & $12$ & $14$ & $1$ & $5$\\
\end{tabular}
    \caption{Nonzero cancellations for QBPDs in $S_4$, when considering the generated quantum double Schubert polynomial.}
    \label{tab:s4cancel}
\end{table}
\begin{example}
    Permutation $615432$ has $97032$ monomials in its quantum double Schubert polynomial, while the total number of monomials generated from QBPDs is $140052$. $21510$ pairs of monomials cancel out, and the number of QBPDs is $1038$. This is the permutation with the most cancellation in $S_6$.
\end{example}
\begin{table}[h!]
    \centering
    \begin{tabular}{ c c c c c c }
         & Total & Average per Permutation & Permutation of Max & Max\\
         $S_3$ & $0$ & $-$ & $-$ & $-$\\
         $S_4$ & $5$ & $0.208$ & $[4,1,3,2]$ & $2$\\ 
         $S_5$ & $1350$ & $11.25$ & $[5,1,4,3,2]$ & $153$\\ 
         $S_6$ & $570549$ & $792.43$ & $[6,1,5,4,3,2]$ & $21510$
    \end{tabular}
    \caption{Cancellations in $S_n$ for $n=3,4,5,6$}
    \label{tab:cancelstats}
\end{table}
As shown in Table~\ref{tab:cancelstats}, the number of cancellations per permutation grows larger with greater $n$. These cancellations increase with the number of QPBDs that are generated by a permutation. In particular, we can observe that there are several ways that cancellations occur. Two QBPDs could completely cancel each other out in both the single and double quantum Schubert polynomial case, as in Figure \ref{fig:completecancel}, or they could partially cancel. In the case of partial cancellation, Figure \ref{fig:binomial-cancel} illustrates the cancellation of binomial terms and no cancellation in monomial terms, while Figure \ref{fig:12543} illustrates the cancellation of monomial terms but not binomial terms. 
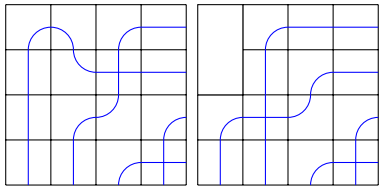
\begin{figure}[h!]
    \centering
    \begin{tikzpicture}[scale=\figscale]
\draw (0, 0) grid (4,4);
\draw[blue](1,3.5) arc (90:180:0.5);
\draw[blue](0.5,2)--(0.5,3);
\draw[blue](0.5,1)--(0.5,2);
\draw[blue](0.5,0)--(0.5,1);
\draw[blue](1.5,3) arc (0:90:0.5);
\draw[blue](1.5,3) arc (180:270:0.5);
\draw[blue](2,1.5) arc (90:180:0.5);
\draw[blue](1.5,0)--(1.5,1);
\draw[blue](3,3.5) arc (90:180:0.5);
\draw[blue](2.5,2)--(2.5,3);
\draw[blue](2,2.5)--(3,2.5);
\draw[blue](2,1.5) arc (-90:0:0.5);
\draw[blue](3,0.5) arc (90:180:0.5);
\draw[blue](3,3.5)--(4,3.5);
\draw[blue](3,2.5)--(4,2.5);
\draw[blue](4,1.5) arc (90:180:0.5);
\draw[blue](3.5,0)--(3.5,1);
\draw[blue](3,0.5)--(4,0.5);
\end{tikzpicture}
\begin{tikzpicture}[scale=\figscale]
\draw (0, 0) grid (4,4);
\draw[fill=white,draw=black](0,2) rectangle (1,4);
\draw[blue](1,1.5) arc (90:180:0.5);
\draw[blue](0.5,0)--(0.5,1);
\draw[blue](2,3.5) arc (90:180:0.5);
\draw[blue](1.5,2)--(1.5,3);
\draw[blue](1.5,1)--(1.5,2);
\draw[blue](1,1.5)--(2,1.5);
\draw[blue](1.5,0)--(1.5,1);
\draw[blue](2,3.5)--(3,3.5);
\draw[blue](3,2.5) arc (90:180:0.5);
\draw[blue](2,1.5) arc (-90:0:0.5);
\draw[blue](3,0.5) arc (90:180:0.5);
\draw[blue](3,3.5)--(4,3.5);
\draw[blue](3,2.5)--(4,2.5);
\draw[blue](4,1.5) arc (90:180:0.5);
\draw[blue](3.5,0)--(3.5,1);
\draw[blue](3,0.5)--(4,0.5);
\end{tikzpicture}
    \caption{Two QBPDs for $2143$ whose binomial weights cancel each other out completely. The left contributes $-q_1$, and the right contributes $q_1$.}
    \label{fig:completecancel}
\end{figure}
\begin{figure}[h!]
    \centering
    \begin{tikzpicture}[scale=\figscale]
\draw (0, 0) grid (4,4);
\draw[fill=white,draw=black](0,2) rectangle (1,4);
\draw[blue](1,1.5) arc (90:180:0.5);
\draw[blue](0.5,0)--(0.5,1);
\draw[blue](2,2.5) arc (90:180:0.5);
\draw[blue](1,1.5) arc (-90:0:0.5);
\draw[blue](2,0.5) arc (90:180:0.5);
\draw[blue](3,3.5) arc (90:180:0.5);
\draw[blue](2,2.5) arc (-90:0:0.5);
\draw[blue](3,1.5) arc (90:180:0.5);
\draw[blue](2.5,0)--(2.5,1);
\draw[blue](2,0.5)--(3,0.5);
\draw[blue](3,3.5)--(4,3.5);
\draw[blue](4,2.5) arc (90:180:0.5);
\draw[blue](3.5,1)--(3.5,2);
\draw[blue](3,1.5)--(4,1.5);
\draw[blue](3.5,0)--(3.5,1);
\draw[blue](3,0.5)--(4,0.5);
\end{tikzpicture}
\begin{tikzpicture}[scale=\figscale]
\draw (0, 0) grid (4,4);
\draw[blue](1,3.5) arc (90:180:0.5);
\draw[blue](0.5,2)--(0.5,3);
\draw[blue](0.5,1)--(0.5,2);
\draw[blue](0.5,0)--(0.5,1);
\draw[blue](1.5,3) arc (0:90:0.5);
\draw[blue](1.5,3) arc (180:270:0.5);
\draw[blue](2,0.5) arc (90:180:0.5);
\draw[blue](3,3.5) arc (90:180:0.5);
\draw[blue](2,2.5) arc (-90:0:0.5);
\draw[blue](3,1.5) arc (90:180:0.5);
\draw[blue](2.5,0)--(2.5,1);
\draw[blue](2,0.5)--(3,0.5);
\draw[blue](3,3.5)--(4,3.5);
\draw[blue](4,2.5) arc (90:180:0.5);
\draw[blue](3.5,1)--(3.5,2);
\draw[blue](3,1.5)--(4,1.5);
\draw[blue](3.5,0)--(3.5,1);
\draw[blue](3,0.5)--(4,0.5);
\end{tikzpicture}
    \caption{Two QBPDs for $1432$ whose monomial weights do not cancel out, but binomial weights partially cancel out. The left QBPD contributes $x_1q_1-y_2q_1$, and the right QBPD contributes $-x_3q_1+y_2q_1$.}
    \label{fig:binomial-cancel}
\end{figure}
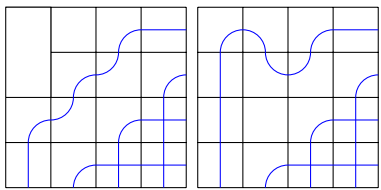
\begin{figure}[h!]
    \centering
    \begin{tikzpicture}[scale=\figscale]
\draw (0, 0) grid (5,5);
\draw[fill=white,draw=black](0,3) rectangle (1,5);
\draw[blue](1,2.5) arc (90:180:0.5);
\draw[blue](0.5,1)--(0.5,2);
\draw[blue](0.5,0)--(0.5,1);
\draw[blue](2,4.5) arc (90:180:0.5);
\draw[blue](1.5,3)--(1.5,4);
\draw[blue](1,2.5) arc (-90:0:0.5);
\draw[blue](2,1.5) arc (90:180:0.5);
\draw[blue](1.5,0)--(1.5,1);
\draw[blue](2,4.5)--(3,4.5);
\draw[blue](3,3.5) arc (90:180:0.5);
\draw[blue](2.5,2)--(2.5,3);
\draw[blue](2,1.5) arc (-90:0:0.5);
\draw[blue](3,0.5) arc (90:180:0.5);
\draw[blue](3,4.5)--(4,4.5);
\draw[blue](3,3.5)--(4,3.5);
\draw[blue](4,1.5) arc (90:180:0.5);
\draw[blue](3.5,0)--(3.5,1);
\draw[blue](3,0.5)--(4,0.5);
\draw[blue](4,4.5)--(5,4.5);
\draw[blue](4,3.5)--(5,3.5);
\draw[blue](5,2.5) arc (90:180:0.5);
\draw[blue](4.5,1)--(4.5,2);
\draw[blue](4,1.5)--(5,1.5);
\draw[blue](4.5,0)--(4.5,1);
\draw[blue](4,0.5)--(5,0.5);
\end{tikzpicture}
\begin{tikzpicture}[scale=\figscale]
\draw (0, 0) grid (5,5);
\draw[blue](1,4.5) arc (90:180:0.5);
\draw[blue](0.5,3)--(0.5,4);
\draw[blue](0.5,2)--(0.5,3);
\draw[blue](0.5,1)--(0.5,2);
\draw[blue](0.5,0)--(0.5,1);
\draw[blue](1.5,4) arc (0:90:0.5);
\draw[blue](1.5,4) arc (180:270:0.5);
\draw[blue](2,1.5) arc (90:180:0.5);
\draw[blue](1.5,0)--(1.5,1);
\draw[blue](3,4.5) arc (90:180:0.5);
\draw[blue](2,3.5) arc (-90:0:0.5);
\draw[blue](3,2.5) arc (90:180:0.5);
\draw[blue](2,1.5) arc (-90:0:0.5);
\draw[blue](3,0.5) arc (90:180:0.5);
\draw[blue](3,4.5)--(4,4.5);
\draw[blue](4,3.5) arc (90:180:0.5);
\draw[blue](3,2.5) arc (-90:0:0.5);
\draw[blue](4,1.5) arc (90:180:0.5);
\draw[blue](3.5,0)--(3.5,1);
\draw[blue](3,0.5)--(4,0.5);
\draw[blue](4,4.5)--(5,4.5);
\draw[blue](4,3.5)--(5,3.5);
\draw[blue](5,2.5) arc (90:180:0.5);
\draw[blue](4.5,1)--(4.5,2);
\draw[blue](4,1.5)--(5,1.5);
\draw[blue](4.5,0)--(4.5,1);
\draw[blue](4,0.5)--(5,0.5);
\end{tikzpicture}
    \caption{Two QBPDs for $12543$ whose monomial weights cancel each other out, but binomial weights do not cancel completely. The left QBPD contributes $x_3q_1-y_4q_1$, and the right contributes $-x_3q_1+y_2q_1$.}
    \label{fig:12543}
\end{figure}
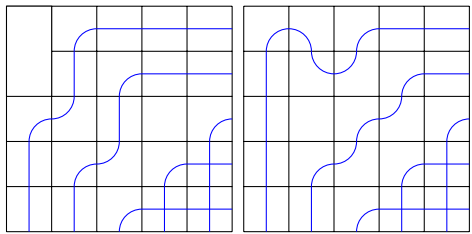
We attempted to find natural classes of permutations that are cancellation-free; however, we were unable to do so. Grassmannian, $132$-avoiding, dominant, and vexillary permutations all fail to be cancellation-free in general. However, we do observe that simple cycles of the form $(t, t+1, \dots, t+k)$ or of the form $(t, t+k, t+k-1, \dots, t+1)$ in cycle notation, as well as the longest permutation $w_0$, are cancellation free.
\begin{proposition}\label{prop:cancelfree}
    The QBPD formula for the quantum double Schubert polynomials is cancellation-free for the following classes of permutations: the longest permutation $w_0$ of each $S_n$, as well as cycles of the form $(t, t+1, \dots, t+k)$ and $(t, t+k, t+k-1, \dots, t+1)$ in cycle notation for $k \geq 1$.
    \begin{proof}
        For the longest permutation $w_0$ of each $S_n$, the Rothe diagram is the only unpaired QBPD. As a result, there are no tiles contributing a $-q_i$ factor to the weight, so there are no cancellations.

        The simple cycle of the form $(t,t+1,\dots, t+k)$ has a Rothe diagram with only one single column of empty tiles at column $t$. Any unpaired QBPDs can then be obtained using only droop moves. As a result, there are no tiles contributing a $-q_i$ factor to the weight, and therefore no cancellations. 

        The simple cycle of the form $(t,t+k,t+k-1,\dots,t+1)$ has a Rothe diagram with only one single row of empty tiles at row $t$. Any QBPDs can then be obtained using droop moves or lift moves that lift up the strand by at most one row. Thus, the only tiles that can contribute $q$ variables are \begin{tikzpicture}[scale=\inlinescale]
\draw (0, 0) grid (1,1);
\draw[blue](0.5,0) arc (0:90:0.5);
\end{tikzpicture} tiles, so there are also no cancellations.
    \end{proof}
\end{proposition}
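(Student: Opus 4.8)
The plan is to prove the stronger statement that \emph{every} occurrence of a given monomial in the full expansion of $\sum_{P\in\QBPD(w)}\bwt(P)$ carries the same sign, so that collecting like terms only ever adds coefficients of equal sign and no cancellation can occur. Recall from Definition \ref{def:bwt} that $\bwt(P)=(-1)^{|NQ(P)|}\prod_{(i,j)\in E(P)}(x_i-y_j)\prod_{(i,j)\in Q(P)\cup NQ(P)}q_i$. When we expand each binomial $(x_i-y_j)$ by selecting either $x_i$ or $-y_j$, a resulting monomial $M$ picks up the sign $(-1)^{\deg_y M}$ from the binomials, where $\deg_y M$ denotes the total degree of $M$ in the $y$ variables, together with the global sign $(-1)^{|NQ(P)|}$. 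Thus every occurrence of $M$ (over all $P$ and all expansion choices) has sign $(-1)^{\deg_y M}\,(-1)^{|NQ(P)|}$, and the whole argument reduces to forcing $(-1)^{|NQ(P)|}$ to be a function of $M$ alone.

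There are two regimes where this happens. In \emph{regime (I)}, every QBPD $P$ of $w$ has $NQ(P)=\varnothing$, so $(-1)^{|NQ(P)|}=1$ and each occurrence of $M$ has sign $(-1)^{\deg_y M}$. In \emph{regime (II)}, every $P$ has $Q(P)=\varnothing$ while each tile counted by $NQ(P)$ contributes a single $q$ variable; since the binomial expansion leaves the $q$ variables untouched, the total $q$-degree of $M$ equals $|NQ(P)|$, and each occurrence of $M$ has sign $(-1)^{\deg_q M+\deg_y M}$. In either regime the sign is pinned down by $M$, giving cancellation-freeness. It remains to place each of the three families into one of these regimes.

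For $w_0$ and for the increasing cycle $(t,t+1,\dots,t+k)$ we are in regime (I). For $w_0$ the staircase Rothe diagram admits neither a droop nor a lift, so it is the unique unpaired QBPD; for the increasing cycle its Rothe diagram has empty tiles confined to the single column $t$, and by Lemma \ref{lem:droopliftgen} every unpaired QBPD is reached from the Rothe diagram by droop and lift moves. The crucial point is that droop moves introduce only leftward and downward motion, so they never create a southwest elbow or an upward-moving strand; one checks that in the single-column situation droops already suffice to generate every unpaired QBPD, so no such tile is ever produced and $NQ(P)=\varnothing$. Pairing empty cells into dominoes only adds tiles to $Q(P)$, which are positive, so regime (I) holds and these families are cancellation-free.

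For the decreasing cycle $(t,t+k,t+k-1,\dots,t+1)$ we are in regime (II). Its Rothe diagram has empty tiles confined to the single row $t$, so no two empty cells are vertically adjacent and no domino can be formed; by Lemma \ref{lem:droopliftgen} every QBPD arises from droops together with lifts that raise a strand by a single row into row $t$. Such a one-row lift creates exactly one southwest elbow and turns the remaining corners into elbows that contribute no $q$ factor, and it introduces no crossing in which a vertical strand moves upward; hence the only $q$-contributing tiles are southwest elbows, each giving a single $-q_t$. Therefore $Q(P)=\varnothing$ and $|NQ(P)|=\deg_q M$, which is exactly regime (II). The main obstacle is the structural verification underlying the last two paragraphs, namely that for the single-column Rothe diagram droops alone generate all unpaired QBPDs (so that no southwest elbow or upward strand ever appears) and that for the single-row Rothe diagram every admissible lift yields only southwest-elbow $q$-tiles and never a domino or an upward crossing; this is a case analysis of how the moves of Lemma \ref{lem:droopliftgen} interact with these highly constrained shapes, using that droops move pipes only down and left and that a lift into a single empty line cannot manufacture tiles of mixed sign. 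Once that is in place, the sign bookkeeping of the first two paragraphs finishes the proof.
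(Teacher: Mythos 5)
Your proposal is correct and takes essentially the same route as the paper: your two ``regimes'' are precisely the paper's observations that for $w_0$ and the increasing cycles no tile ever contributes a $-q_i$ (the Rothe diagram is the only unpaired QBPD, resp.\ droop moves alone generate everything), while for the decreasing cycles the only $q$-contributing tiles are southwest elbows, so that in each case the sign of a monomial is a function of the monomial alone. The structural facts you flag as ``the main obstacle'' and defer to a case analysis are asserted with exactly the same brevity in the paper's own proof, so there is no gap relative to it; your explicit sign bookkeeping is, if anything, a more careful writeup of the step the paper leaves implicit.
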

\section{Future Directions}\label{future}
This paper provides a bumpless pipe dream formulation for quantum double Schubert polynomials, but we were unable to find a cancellation-free formula. Obtaining such a formula for quantum double Schubert polynomials would be desirable.

The authors also attempted to find a quantum (non-bumpless) pipe dream formulation for the quantum double Schubert polynomials, generalizing the usual pipe dream formulation of Schubert polynomials as in \cite{FOMIN1996123, bergeron}, but were not able to do so successfully. A canonical weight-preserving bijection between pipe dreams and bumpless pipe dreams was given in \cite{Gao_2023}; however, it only preserves the $x$'s weight. It would be interesting to come up with a pipe dream formulation for quantum double Schubert polynomials, and it would also be interesting if there is a canonical bijection from such objects to the QBPDs in this paper that preserves the monomial weights.
\bibliographystyle{plain}
\bibliography{Citations}

 \end{document}